\long\def\commentout#1{}
\numberwithin{equation}{section}
\newtheoremstyle{slplain}
  {\topsep}
  {\topsep}
  {\slshape}
  {0pt}
  {\bfseries}
  {.}
  {0.5em}
  {}
\theoremstyle{slplain}
  \newtheorem{THM}{Theorem}[section]
  \newtheorem{LEM}[THM]{Lemma}
  \newtheorem{PROP}[THM]{Proposition}
\theoremstyle{definition}
  \newtheorem{EX}[THM]{Example}
\newcommand\nlongrightarrow{\longrightarrow\kern -1.45em/\kern 0.9em}
\renewcommand{\le}{\leqslant}
\renewcommand{\ge}{\geqslant}
\newcommand{\tp}{\mathrm{tp}}
\newcommand{\0}{\varnothing}
\renewcommand{\phi}{\varphi}
\renewcommand{\epsilon}{\varepsilon}
\newcommand{\BB}{\mathbf{B}}
\newcommand{\CC}{\mathbf{C}}
\newcommand{\DD}{\mathbf{D}}
\newcommand{\EE}{\mathbf{E}}
\newcommand{\KK}{\mathbf{K}}
\newcommand{\NN}{\mathbb{N}}
\newcommand{\RR}{\mathbb{R}}
\newcommand{\TT}{\mathbf{T}}
\newcommand{\union}{\cup}
\newcommand{\restr}[2]{\hbox{$#1$}\hbox{$\upharpoonright$}_{#2}}
\newcommand{\Boxed}[1]{\mbox{$#1$}}
\newcommand{\id}{\mathrm{id}}
\newcommand{\ID}{\mathrm{ID}}
\newcommand{\Ob}{\mathrm{Ob}}
\newcommand{\op}{\mathrm{op}}
\newcommand{\fin}{\mathit{fin}}
\newcommand{\full}{\mathit{full}}
\newcommand{\calA}{\mathcal{A}}
\newcommand{\calB}{\mathcal{B}}
\newcommand{\calC}{\mathcal{C}}
\newcommand{\calD}{\mathcal{D}}
\newcommand{\calE}{\mathcal{E}}
\newcommand{\calG}{\mathcal{G}}
\newcommand{\calM}{\mathcal{M}}
\newcommand{\calS}{\mathcal{S}}
\newcommand{\calT}{\mathcal{T}}
\newcommand{\calU}{\mathcal{U}}
\newcommand{\calX}{\mathcal{X}}
\newcommand{\calY}{\mathcal{Y}}
\newcommand{\im}{\mathrm{im}}
\newcommand{\Spec}{\mathrm{Spec}}
\newcommand{\Set}{\mathbf{Set}}
\newcommand{\SetSurj}{\mathbf{Set}_{\mathit{surj}}}
\newcommand{\Top}{\mathbf{Top}}
\newcommand{\CHrs}{\mathbf{FinCh}_{\mathit{rs}}}
\newcommand{\WchRs}{\mathbf{Wch}_{\mathit{rs}}}
\newcommand{\LMetNers}{\mathbf{LMet}_{\mathit{ners}}}
\newcommand{\LMetNersFin}{\mathbf{LMet}_{\mathit{ners}}^\fin}
\newcommand{\MetNes}{\mathbf{Met}_{\mathit{nes}}}
\newcommand{\Map}{\mathrm{Map}}
\newcommand{\RSurj}{\mathrm{RSurj}}
\newcommand{\Surj}{\mathrm{Surj}}
\newcommand{\QMap}{\mathrm{QMap}}
\newcommand{\Borel}[1]{\overset{\flat}{#1}}
\newcommand{\FinE}{\mathbf{FinE}}
\newcommand{\FinSetSurj}{\mathbf{FinSet}_{\mathit{surj}}}
\newcommand{\FinRelQuo}{\mathbf{FinRel}_{\mathit{quo}}}
\newcommand{\RelQuo}{\mathbf{Rel}_{\mathit{quo}}}
\title{Classes of finite relational structures over finite languages have dual Ramsey degrees}
 \author{%
    Aleksa D\v zuklevski, Dragan Ma\v sulovi\'c\\
    University of Novi Sad, Faculty of Sciences\\
    Department of Mathematics and Informatics\\
    Trg Dositeja Obradovi\'ca 3, 21000 Novi Sad, Serbia\\
    e-mail: \texttt{\{aleksa.dzuklevski,dragan.masulovic\}@dmi.uns.ac.rs}
  }
\begin{document}
\maketitle

\begin{abstract}
  Classical Ramsey theory has successfully extended to relational structures,
  yielding a wealth of results that have profoundly influenced other areas of mathematics.
  Interestingly, the same development has not occurred in the case of dual Ramsey theory.
  The main goal of this paper is to advance the dual Ramsey theory for finite relational
  structures with respect to natural structure-preserving maps. Tools from category theory
  prove instrumental in this endeavor, as was previously the case for finite algebraic systems
  where the dual Ramsey property had been established for every class of finite algebras coming from
  an equationally defined class. One cannot help but feel that dual Ramsey phenomena are deeply
  connected to categorical strategies.

  \medskip

  \noindent
  \textbf{Key Words and Phrases:} dual Ramsey theory, category theory

  \medskip

  \noindent
  \textbf{MSC (2020): 05C55; 18B99}
\end{abstract}

\section{Introduction}
One of the first successful applications of category theory in combinatorics was
Graham, Leeb and Rothschild's 1972 proof of the Rota Conjecture \cite{graham-leeb-rothschild}
(which can be seen as a vector-space analogue of the Finite Ramsey Theorem from the 1930s \cite{Ramsey}).
Around the same time, the first results in the area of structural Ramsey theory were obtained (see \cite{N1995} for references).
As it turns out, structural Ramsey theory is yet another segment of combinatorics where the tools of category theory prove to be fruitful,
in particular in the context of dual Ramsey phenomena. Unlike ``direct'' Ramsey theory which had successfully evolved to
relational structures and supplied us with a plethora of results which have profoundly influenced other ares of mathematics
(see for example~\cite{KPT}), the same has not happened with the dual Ramsey theory, Table~\ref{drdrs.fig.rpfos}.
A notable exception is Solecki's paper~\cite{solecki-structs} where a finite dual Ramsey theorem is proved for
finite structures over first-order signatures where functional symbols are interpreted as usual finitary operations,
and relational symbols are interpreted as dual relations. Using the strategies that steam from category theory, Solecki's results for
dual Ramsey properties of finite algebras were reproduced and slightly generalized in~\cite{masul-drp-algs}, and extended to
infinite dual Ramsey statements.

\begin{table}
  \centering
  \begin{tabular}{cccc}
    \toprule
                       &  \bf Relational                              & \bf Algebraic      &   \\
                       &  \bf structures                              & \bf systems        & \bf Tools \\
    \midrule
      \bf ``Direct''   & almost any                                               &  \textsl{a few very}          & \\
      \bf Ramsey       & reasonable class                                         &  \textsl{specific instances}  & combinatorial \\
      \bf properties   & (see e.g. \cite{Hubicka-Nesetril-AllThoseRamseyClasses}) &  (see e.g.~\cite{jezek-nesetril-1983,sokic-semilattices,sokic-unary-functions,masul-coalg-unary-algs})           &         \\
    \midrule                                     
      \bf Dual         & almost any                                      &  every reaso-      &      \\
      \bf Ramsey       & reasonable class                                &  nable class                 & categorical      \\
      \bf properties   & (this paper)                                    &  (see \cite{solecki-structs,masul-drp-algs}) & \\
    \bottomrule
  \end{tabular}
  \caption{Ramsey properties in first-order structures}
  \label{drdrs.fig.rpfos}
\end{table}

The main goal of this paper is to develop the dual Ramsey theory for finite relational structures with respect to natural
structure-preserving maps. Tools of category theory prove to be instrumental in this endeavor,
as they allow us to lift the dual Ramsey phenomena from the context of chains and rigid surjections onto
the context of structures and structure-preserving maps.

To achieve this several purely categorical techniques are employed. They are introduced and adjusted to our needs in Section~\ref{drdrs.sec.prelim}.
The main results of the paper are presented in Section~\ref{drdrs.sec.rsfl} where
we show the existence of dual Ramsey degrees, both small and big, for general relational structures over finite relational languages.
In Section~\ref{drdrs.sec.bin-rel-structs} we consider some well-known classes of reflexive binary relational structures
such as reflexive graphs (that is, graphs with all loops) and posets. We show that the corresponding classes of finite structures
have dual small Ramsey degrees, and that projectively universal countably infinite structures have dual big Ramsey degrees.
All the computations are carried out with respect to quotient maps.
In sharp contrast to this, we also show that finite reflexive tournaments do not have dual small Ramsey degrees.
The easy observation that projectively universal countable tournaments do not exist renders the question
of dual big Ramsey degrees for reflexive tournaments meaningless.
We conclude the paper with Section~\ref{drdrs.sec.met-spc} where we compute dual small Ramsey degrees of finite metric spaces,
and compute dual big Ramsey degrees of finite metric spaces in arbitrary projectively universal countable
metric spaces. The natural morphisms here are non-expansive surjective maps.

Looking at Table~\ref{drdrs.fig.rpfos} again, one cannot escape the feeling that dual Ramsey phenomena
are intimately tied to strategies of category theory.

\section{Preliminaries}
\label{drdrs.sec.prelim}

\paragraph{Structures.}
A \emph{chain} is a linearly ordered set $(A, \Boxed<)$. Finite or countably infinite
chains will sometimes be denoted as $\{a_1 < \ldots < a_n < \ldots\}$. For example, $\omega = \{0 < 1 < 2 < \ldots\}$.
A chain $(A, \Boxed<)$ is \emph{well-ordered} if every nonempty subset of $A$ has the least element with respect to~$<$.
Let $(A, \Boxed<)$ and $(B, \Boxed<)$ be well-ordered chains. A surjective map $f : A \to B$ is a \emph{rigid surjection}
if $b_1 < b_2$ implies $\min f^{-1}(b_1) < \min f^{-1}(b_2)$ for all $b_1, b_2 \in B$.
We write $\RSurj\big((A, \Boxed<), (B, \Boxed<)\big)$
for the set of all rigid surjections $(A, \Boxed<) \to (B, \Boxed<)$.

A \emph{relational language} is a set $\Theta$ of \emph{relation symbols}, each of which comes with its \emph{arity}.
A \emph{$\Theta$-structure} $\calA = (A, \Theta^\calA)$ is a set $A$ together with a set $\Theta^\calA$ of
relations on $A$ which are interpretations of the corresponding symbols in $\Theta$.
The underlying set of a structure $\calA$, $\calA_1$, $\calA^*$, \ldots\ will always be denoted by its roman
letter $A$, $A_1$, $A^*$, \ldots\ respectively.
A structure $\calA = (A, \Theta^\calA)$ is \emph{finite} if $A$ is a finite set.
An element $a \in A$ is an \emph{isolated point} in $\calA = (A, \Theta^\calA)$ if for every $R \in \Theta$ and
every $(b_1, \ldots, b_h) \in R^A$ we have that $a \notin \{b_1, \ldots, b_h\}$.

Let $\calA$ and $\calB$ be $\Theta$-structures and let $f : A \to B$ be a mapping.
We say that $f$ \emph{preserves relations}, or that $f$ is a \emph{homomorphism} if:
\begin{itemize}
\item  $R^\calA(a_1, \ldots, a_n) \Rightarrow R^\calB(f(a_1), \ldots, f(a_n))$
      for all $R \in \Theta$ and $a_1, \ldots, a_n \in A$, where $n$ is the arity of~$R$.
\end{itemize}
We say that $f$ \emph{reflects relations} if:
\begin{itemize}
  \item for every $n \ge 1$, every $R \in \Theta$ of arity $n$ and all $b_1, \ldots, b_n \in \im(f)$,
        if $R^\calB(b_1, \ldots, b_n)$ then there are
        $a_1, \ldots, a_n \in A$ such that $R^\calA(a_1, \ldots, a_n)$ and $f(a_i) = b_i$, $1 \le i \le n$.
\end{itemize}
The mapping $f$ is an \emph{embedding} if it as an injective homomorphism which reflects relations.
Equivalently, $f$ is an embedding if and only if:
\begin{itemize}
\item $R^\calA(a_1, \ldots, a_n) \Leftrightarrow R^\calB(f(a_1), \ldots, f(a_n))$
      for all $R \in \Theta$ and $a_1, \ldots, a_n \in A$, where $n$ is the arity of~$R$.
\end{itemize}
Finally, $f$ is a \emph{quotient map} if it is a surjective homomorphism which reflects relations.
Let $\QMap(\calA, \calB)$ denote the set of all quotient maps $\calA \to \calB$.

Surjective embeddings are \emph{isomorphisms}. We write $\calA \cong \calB$ to denote that $\calA$ and $\calB$ are isomorphic.
A structure $\calA$ is a \emph{substructure} of a structure
$\calB$ ($\calA \le \calB$) if the identity map is an embedding of $\calA$ into $\calB$.
Let $\calA$ be a structure and $\0 \ne B \subseteq A$. Then $\calA[B] = (B, \restr {\Theta^\calA} B)$ denotes
the \emph{substructure of $\calA$ induced by~$B$}, where $\restr {\Theta^\calA} B$ denotes the restriction of
relations in $\Theta^\calA$ to~$B$.

\paragraph{Categories.}
In order to specify a \emph{category} $\CC$ one has to specify
a class of objects $\Ob(\CC)$, a class of morphisms $\hom_\CC(A, B)$ for all $A, B \in \Ob(\CC)$,
the identity morphism $\id_A$ for all $A \in \Ob(\CC)$, and
the composition of morphisms~$\cdot$~so that
$\id_B \cdot f = f = f \cdot \id_A$ for all $f \in \hom_\CC(A, B)$, and
$(f \cdot g) \cdot h = f \cdot (g \cdot h)$ whenever the compositions are defined.
A category $\CC$ is \emph{locally small} if $\hom_\CC(A, B)$ is a set for all $A, B \in \Ob(\CC)$.
Sets of the form $\hom_\CC(A, B)$ are then referred to as \emph{homsets}.
For $A, B \in \Ob(\CC)$ we write $A \to B$ if $\hom_\CC(A, B) \ne \0$.

For a category $\CC$, the \emph{opposite category}, denoted by $\CC^\op$, is the category whose objects
are the objects of $\CC$, morphisms are formally reversed so that
$
  \hom_{\CC^\op}(A, B) = \hom_\CC(B, A),
$
and so is the composition:
$
  f \cdot_{\CC^\op} g = g \cdot_\CC f.
$

The \emph{product} of categories $\CC_1$ and $\CC_2$ is the category $\CC_1 \times \CC_2$ whose objects are pairs $(A_1, A_2)$
where $A_1 \in \Ob(\CC_1)$ and $A_2 \in \Ob(\CC_2)$,
morphisms are pairs $(f_1, f_2) : (A_1, A_2) \to (B_1, B_2)$ where
$f_i$ is a morphism from $A_i$ to $B_i$ in $\CC_i$, $i \in \{1, 2\}$, and the composition of morphisms
is carried out componentwise: $(f_1, f_2) \cdot (g_1, g_2) = (f_1 \cdot g_1, f_2 \cdot g_2)$.

A category $\CC$ is \emph{directed} if for all $A, B \in \Ob(\CC)$ there is a $C \in \Ob(\CC)$ such that
$A \to C$ and $B \to C$. A category $\CC$ is \emph{dually directed} if $\CC^\op$ is directed.
A category $\CC$ has \emph{amalgamation} if for all $A, B, C \in \Ob(\CC)$ and morphisms $f \in \hom_\CC(A, B)$
and $g \in \hom_\CC(A, C)$ there is a $D \in \Ob(\CC)$ and morphisms $p \in \hom_\CC(B, D)$ and $q \in \hom_\CC(C, D)$ such that 
$p \cdot f = q \cdot g$. We say that $\CC$ has \emph{projective amalgamation} (or \emph{dual amalgamation}) if $\CC^\op$ has amalgamation.

A morphism $f \in \hom_\CC(B, C)$ is \emph{mono} or \emph{left cancellable} if
$f \cdot g = f \cdot h$ implies $g = h$ for all $g, h \in \hom_\CC(A, B)$ where $A \in \Ob(\CC)$ is arbitrary.
A morphism $f \in \hom_\CC(B, C)$ is \emph{epi} or \emph{right cancellable} if
$g \cdot f = h \cdot f$ implies $g = h$ for all $g, h \in \hom_\CC(C, D)$ where $D \in \Ob(\CC)$ is arbitrary.
A morphism $f \in \hom_\CC(A, B)$ is \emph{invertible} if there exists a morphism $g \in \hom_\CC(B, A)$
such that $g \cdot f = \id_A$ and $f \cdot g = \id_B$.
Two objects $A, B \in \Ob(\CC)$ are \emph{isomorphic}, and we write $A \cong B$,
if there exists an invertible morphism in $\hom_\CC(A, B)$.

A category $\DD$ is a \emph{subcategory} of a category $\CC$ if $\Ob(D) \subseteq \Ob(\CC)$ and $\hom_\DD(A, B) \subseteq \hom_\CC(A, B)$
for all $A, B \in \Ob(\DD)$. A subcategory $\DD$ of a category $\CC$ is a \emph{full subcategory} if
$\hom_\DD(A, B) = \hom_\CC(A, B)$ for all $A, B \in \Ob(\DD)$. Every class $\KK \subseteq \Ob(\CC)$ can be
turned into a category $\KK^*$ where $\Ob(\KK^*) = \KK$, $\hom_{\KK^*}(A, B) = \hom_\CC(A, B)$ for all
$A, B \in \KK$, and $f \cdot_{\KK^*} g = f \cdot_\CC g$. We say then that $\KK^*$ is a \emph{full subcategory of~$\CC$ spanned by~$\KK$.}

If $\CC$ is a category of structures, by $\CC^\fin$ we denote the full subcategory $\CC$ spanned by all finite
structures in $\CC$.

A \emph{functor} $F : \CC \to \DD$ from a category $\CC$ to a category $\DD$ maps $\Ob(\CC)$ to
$\Ob(\DD)$ and maps morphisms of $\CC$ to morphisms of $\DD$ so that
$F(f) \in \hom_\DD(F(A), F(B))$ whenever $f \in \hom_\CC(A, B)$, $F(f \cdot g) = F(f) \cdot F(g)$ whenever
$f \cdot g$ is defined, and $F(\id_A) = \id_{F(A)}$.
The identity functor $\ID_\CC : \CC \to \CC$ takes objects and morphisms of $\CC$ to themselves.
Categories $\CC$ and $\DD$ are isomorphic, in symbols $\CC \cong \DD$, if there exist functors
$F : \CC \to \DD$ and $G : \DD \to \CC$ such that $GF = \ID_\CC$ and $FG = \ID_\DD$.
A functor $F : \CC \to \DD$ is \emph{full} if it is surjective on homsets
(that is: for every $g \in \hom_\DD(F(A), F(B))$ there is an
$f \in \hom_\CC(A, B)$ with $F(f) = g$), and
\emph{faithful} if it is injective on homsets (that is: $F(f) = F(g)$ implies $f = g$).
A functor $U : \CC \to \DD$ is \emph{forgetful} if it is faithful and surjective on objects.
In this setting we may actually assume that $\hom_{\CC}(A, B) \subseteq \hom_\DD(U(A), U(B))$ for all $A, B \in \Ob(\CC)$.
The intuition behind this point of view is that $\CC$ is a category of structures, $\DD$ is the category of sets
and $U$ takes a structure $\calA$ to its underlying set $A$ (thus ``forgetting'' the structure). Then for every
morphism $f : \calA \to \calB$ in $\CC$ the same map is a morphism $f : A \to B$ in $\DD$.
Therefore, we shall always take that $U(f) = f$ for all the morphisms in $\CC$. In particular,
$U(\id_{A}) = \id_{U(A)}$ and we, therefore, identify $\id_{A}$ with $\id_{U(A)}$.
Also, if $U : \CC \to \DD$ is a forgetful functor and all the morphisms in $\DD$ are mono, then
all the morphisms in $\CC$ are mono.

For categories $\CC_1$ and $\CC_2$ there is a category $\CC_1 \times \CC_2$ whose objects are pairs $(A_1, A_2)$
where $A_1 \in \Ob(\CC_1)$ and $A_2 \in \Ob(\CC_2)$,
morphisms are pairs $(f_1, f_2) : (A_1, A_2) \to (B_1, B_2)$ where
$f_i$ is a morphism from $A_i$ to $B_i$ in $\CC_i$, $i \in \{1, 2\}$, and the composition of morphisms
is carried out componentwise: $(f_1, f_2) \cdot (g_1, g_2) = (f_1 \cdot g_1, f_2 \cdot g_2)$.
Clearly, if $\tilde A = (A_1, A_2)$ and $\tilde B = (B_1, B_2)$ are objects of $\CC_1 \times \CC_2$
then $\hom(\tilde A, \tilde B) = \hom(A_1, B_1) \times \hom(A_2, B_2)$.

\begin{EX}\label{drpperm.ex.CHrs-def}
  $(a)$ Let $\Set$ be the category of sets and all functions between them. By $\SetSurj$ we denote the category of sets and surjective functions,
  while $\Set^{\le\omega}$ denotes the category of finite and countably infinite sets and all functions. Analogously,
  $\SetSurj^{\le\omega}$ is the category of finite and countably infinite sets and surjective functions between them.

  $(b)$ By $\Top$ we denote the category of topological spaces and continuous maps.

  $(c)$ The composition of two rigid surjections is again a rigid surjection, so finite chains
  and rigid surjections constitute a category which we denote by~$\CHrs$.
  
  $(d)$ All well-ordered chains and rigid surjections constitute a category that we denote by $\WchRs$.
  By $\WchRs^{\le\omega}$ we denote the category of finite chains and countably infinite chains of order type $\omega$,
  together with rigid surjections between them.
\end{EX}

\paragraph{Finite Ramsey phenomena.}
Let $\CC$ be a locally small category. For integers $k, t \in \NN$ and objects $A, B, C \in \Ob(\CC)$
such that $A \to B \to C$ we write
$$
  C \longrightarrow (B)^{A}_{k, t}
$$
to denote that for every $k$-coloring $\chi : \hom(A, C) \to k$ there is a morphism
$w \in \hom(B, C)$ such that $|\chi(w \cdot \hom(A, B))| \le t$.
(For a set of morphisms $F$ we let $w \cdot F = \{ w \cdot f : f \in F \}$.)
Instead of $C \longrightarrow (B)^{A}_{k, 1}$ we simply write $C \longrightarrow (B)^{A}_{k}$.

For $A \in \Ob(\CC)$ let $t(A)$, the \emph{small Ramsey degree of $A$ in $\CC$},
denote the least positive integer $n$ such that
for all $k \in \NN$ and all $B \in \Ob(\CC)$ there exists a $C \in \Ob(\CC)$ such that
$C \longrightarrow (B)^{A}_{k, n}$, if such an integer exists.
Otherwise put $t(A) = \infty$.
A locally small category $\CC$ has the \emph{finite small Ramsey degrees} if $t(A) < \infty$
for all $A \in \Ob(\CC)$. Where necessary we shall write $t_\CC(A)$.
A category $\CC$ has the \emph{Ramsey property} if $t_\CC(A) = 1$ for all $A \in \Ob(\CC)$.

By a dual procedure we can straightforwardly introduce the notion of \emph{dual small Ramsey degrees}:
$$
  t^\partial_\CC(A) = t_{\CC^\op}(A).
$$
We then say that a locally small category $\CC$ has the
\emph{finite dual small Ramsey degrees} if $\CC^\op$ has finite small Ramsey degrees.
A category $\CC$ has the \emph{dual Ramsey property} if $t^\partial_\CC(A) = 1$ for all $A \in \Ob(\CC)$.

\begin{THM}[Finite Dual Ramsey Theorem \cite{graham-leeb-rothschild}]
  The category $\CHrs$ has the dual Ramsey property.
\end{THM}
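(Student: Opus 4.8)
The plan is to unwind the categorical statement into a concrete combinatorial assertion and then recognize it as the empty-alphabet case of the Graham--Rothschild theorem, whose engine is the Hales--Jewett theorem. First I would spell out what the dual Ramsey property demands here. Since $\hom_{\CHrs^\op}(A, B) = \RSurj(B, A)$ and composition in $\CHrs^\op$ reverses that of $\CHrs$, the claim $t^\partial_{\CHrs}(A) = 1$ for all $A$ says: for every finite chain $A$, every finite chain $B$ admitting a rigid surjection $B \twoheadrightarrow A$, and every $k \in \NN$, there is a finite chain $C$ with a rigid surjection $C \twoheadrightarrow B$ such that for every coloring $\chi \colon \RSurj(C, A) \to k$ one can find $w \in \RSurj(C, B)$ with $\{ f \circ w : f \in \RSurj(B, A) \}$ being $\chi$-monochromatic. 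Writing $a = |A| \le b = |B|$, this is a statement purely about cardinalities: we must produce $c = |C|$ large enough in terms of $a$, $b$, $k$.

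The key observation is that a rigid surjection $[n] \twoheadrightarrow [a]$ is the same datum as a surjective word of length $n$ over the alphabet $[a]$ whose first occurrences of the letters appear in increasing order---equivalently, an $a$-parameter word of length $n$ over the \emph{empty} alphabet in the sense of Graham--Rothschild, the first-occurrence condition being exactly rigidity. Under this dictionary the composition $f \circ w$ is precisely the substitution of the $a$-parameter word $f$ into the $b$-parameter word $w$, so the monochromaticity required above becomes exactly the conclusion of the Graham--Rothschild theorem specialized to $\Sigma = \0$. Thus it suffices to prove: for all $k$ and all $a \le b$ there is $c$ such that every $k$-coloring of the $a$-parameter words of length $c$ admits a $b$-parameter word all of whose $a$-parameter subwords receive a single color.

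To prove this I would take the Hales--Jewett theorem as the base engine (coloring $0$-parameter words, i.e.\ points, and extracting a monochromatic combinatorial line), either cited or proved in the usual self-contained way, and then bootstrap to the full parameter version. The first step is multidimensional Hales--Jewett: from the one-line version one extracts, by a product/iteration argument over an enlarged alphabet, a monochromatic $n$-parameter combinatorial subspace for any coloring of points. The second step is the Graham--Rothschild induction on the number $m$ of parameters that are colored; the inductive step passes from $m-1$ to $m$ by absorbing one parameter coordinate into an enlarged alphabet---so that coloring $m$-parameter words over $\Sigma$ is reduced to coloring $(m-1)$-parameter words over a larger alphabet---and then combines the inductive hypothesis with a further application of Hales--Jewett to homogenize the absorbed coordinate. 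Specializing the outcome to $\Sigma = \0$, $m = a$, $n = b$ and translating back through the dictionary yields the chain $C$ and the rigid surjection $w$.

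The main obstacle is the Graham--Rothschild induction itself rather than Hales--Jewett. The statement does not induct within the empty-alphabet world (indeed $0$-parameter words over $\0$ are degenerate): to color $a$-parameter words and simultaneously control all of a $b$-parameter word's $a$-parameter subwords, one is forced to prove the stronger, alphabet-parametrized version that allows an arbitrary finite alphabet $\Sigma$, precisely because each inductive step trades a parameter for extra alphabet letters. Keeping the two nested inductions---on the number of parameters, and inside multidimensional Hales--Jewett, on the dimension---compatible, so that the monochromatic subspace produced at one level serves as the ambient space at the next while all length bounds stay finite, is where essentially all the difficulty lies.
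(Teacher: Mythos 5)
The paper does not actually prove this statement: it is imported wholesale by citation to Graham--Leeb--Rothschild, so there is no internal argument to match yours against. Your proposal is correct and amounts to the standard self-contained proof of the cited result. The unwinding of $t^\partial_{\CHrs}(A)=1$ into the concrete assertion about $\RSurj(C,A)$, $\RSurj(C,B)$ and the sets $\{f\circ w : f\in\RSurj(B,A)\}$ is exactly right (including the direction of composition in the opposite category), and the dictionary identifying rigid surjections $[n]\twoheadrightarrow[a]$ with $a$-parameter words of length $n$ over the empty alphabet, with substitution matching composition, is the classical equivalence between the dual Ramsey theorem and the $\Sigma=\0$ case of the Graham--Rothschild parameter-set theorem. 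Your route (Hales--Jewett, then multidimensional Hales--Jewett, then induction on the number of colored parameters, trading a parameter for extra alphabet letters at each step) is the standard modern derivation, found e.g.\ in Pr\"omel--Voigt-style treatments; you also correctly flag the one point a careless reader would miss, namely that the empty-alphabet statement cannot be proved by induction within the empty-alphabet world, so the alphabet-parametrized generalization is forced. By contrast, the cited 1972 paper of Graham, Leeb and Rothschild proves a Ramsey theorem for an axiomatically defined class of categories (covering finite sets, finite vector spaces and affine spaces over a fixed finite field) by a single induction carried out in that categorical framework; the parameter-word formulation you use is closer to the 1971 Graham--Rothschild paper on $n$-parameter sets. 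What your approach buys is elementarity and transparency -- everything reduces to Hales--Jewett; what the categorical proof buys is simultaneous coverage of the vector-space and affine cases, which are irrelevant for the present paper. Either way, your proposal is a legitimate proof strategy for the theorem as stated, modulo writing out the two nested inductions in full, which, as you say, is where the real work lies.
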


Small Ramsey degrees are multiplicative in the following sense:

\begin{THM}[\cite{masul-kpt,masul-rpppg}]\label{rament.thm.srd-mult}\label{crt.thm.prod}
  Let $\CC_1$ and $\CC_2$ be categories whose morphisms are mono and homsets are finite. Then
  $t_{\CC_1 \times \CC_2}(A_1, A_2) = t_{\CC_1}(A_1) \cdot t_{\CC_2}(A_2)$, for all $A_1 \in \Ob(\CC_1)$ and $A_2 \in \Ob(\CC_2)$.
  This relationship holds even if $t_{\CC_1}(A_1) = \infty$ or $t_{\CC_2}(A_2) = \infty$ in which case
  $t_{\CC_1 \times \CC_2}(A_1, A_2) = \infty$.
\end{THM}

\begin{LEM}\label{rament.lem.APext}
  Let $\CC$ be a category with amalgamation whose homsets are finite.
  For all $A, B, C \in \Ob(\CC)$ and every morphism $f \in \hom(A, B)$ there is a $D \in \Ob(\CC)$
  and a morphism $w \in \hom(C, D)$ such that $w \cdot \hom(A, C) \subseteq \hom(B, D) \cdot f$.
\end{LEM}
\begin{proof}
  Let $\hom(A, C) = \{g_1, g_2, \ldots, g_n\}$. By the amalgamation property, for every $i \in \{1, \ldots, n\}$
  there is a $D_i \in \Ob(\CC)$ and morphisms $p_i \in \hom(C, D_i)$ and $q_i \in \hom(B, D_i)$ such that
  \begin{center}
    \begin{tikzcd}
      B \arrow[r, "q_i"] & D_i \\
      A \arrow[u, "f"] \arrow[r, "g_i"'] & C \arrow[u, "p_i"']
    \end{tikzcd}
  \end{center}
  Moreover, by the repeated use of the amalgamation property there is a $D \in \Ob(\CC)$ and morphisms
  $s_i \in \hom(D_i, D)$, $1 \le i \le n$, such that
  \begin{center}
    \begin{tikzcd}
         &    & D & \\
      D_1 \arrow[urr, "s_1"] & D_2 \arrow[ur, "s_2"'] & {\cdots} & D_n \arrow[ul, "s_n"'] \\
         &    & C \arrow[ull, "p_1"] \arrow[ul, "p_2"'] \arrow[ur, "p_n"']  & \\
    \end{tikzcd}
  \end{center}
  Let $w = s_1 \cdot p_1 = \ldots = s_n \cdot p_n \in \hom(C, D)$. To see that $D$ and $w$ satisfy the requirements of the
  lemma take any $g_i \in \hom(A, C)$ and note that
  $
    w \cdot g_i = s_i \cdot p_i \cdot g_i = s_i \cdot q_i \cdot f \in \hom(B, D) \cdot f.
  $
\end{proof}

\begin{THM} (cf.\ \cite{kechris-sokic-todorcevic})\label{drdrs.thm.cofinal-amalg}
  Let $\CC$ be a category with amalgamation and assume that $\hom(A, B)$ is finite for all $A, B \in \Ob(\CC)$.
  Let $\DD$ be a full subcategory of $\CC$ which is cofinal in $\CC$ and has small Ramsey degrees.
  Then $\CC$ has small Ramsey degrees.
\end{THM}
\begin{proof}
  Take any $k \in \NN$ and any $A, B \in \Ob(\CC)$ such that $A \to B$. Since $\DD$ is cofinal in $\CC$ there
  is a $P \in \Ob(\DD)$ and a morphism $a : A \to P$. Let $t = t_\DD(P)$. By Lemma~\ref{rament.lem.APext}
  there is a $Q \in \Ob(\DD)$ and a morphism $b : B \to Q$ such that
  $$
    b \cdot \hom(A, B) \subseteq \hom(P, Q) \cdot a.
  $$
  Since $t = t_\DD(P)$ there exists an $S \in \Ob(\DD)$ such that $S \longrightarrow (Q)^P_{k, t}$. Let us show that
  $S \longrightarrow (B)^A_{k, t}$. Take any $\chi : \hom(A, S) \to k$ and define $\chi' : \hom(P, S) \to k$ by
  $\chi'(f) = \chi(f \cdot a)$. By the choice of $S$ there is a $w : Q \to S$ such that
  $|\chi'(w \cdot \hom(P, Q))| \le t$. The fact that $|\chi(w \cdot b \cdot \hom(A, B))| \le t$
  follows from:
  $$
    \chi(w \cdot b \cdot \hom(A, B)) \subseteq \chi(w \cdot \hom(P, Q) \cdot a) \subseteq \chi'(w \cdot \hom(P, Q)).
  $$
  This completes the proof.
\end{proof}

We shall also need \cite[Theorem~7]{masul-drp}, but in a more elaborate form.
Consider a finite, acyclic, bipartite digraph with loops 
where all the arrows go from one class of vertices into the other
and the out-degree of all the vertices in the first class is~2 (modulo loops):
\begin{center}
  \begin{tikzcd}
    {\bullet} \arrow[loop above] & {\bullet} \arrow[loop above] & {\bullet} \arrow[loop above] & \ldots & {\bullet} \arrow[loop above] \\
    {\bullet} \arrow[loop below] \arrow[u] \arrow[ur] & {\bullet} \arrow[loop below] \arrow[ur] \arrow[ul] & {\bullet} \arrow[loop below] \arrow[u] \arrow[ur] & \ldots & {\bullet} \arrow[loop below] \arrow[u] \arrow[ull]
  \end{tikzcd}
\end{center}
Such a diagraph can be thought of as a category where the loops represent the identity morphisms, and will be referred to as
a \emph{binary category}. (Note that all the compositions in a binary category
are trivial since no nonidentity morphisms are composable.)

An \emph{amalgamation problem} in a category $\CC$ is a diagram $F : \Delta \to \CC$ where $\Delta$ is a binary category,
$F$ takes the top row of $\Delta$ to the same object, and takes the bottom of $\Delta$ to the same object,
see Fig.~\ref{nrt.fig.2}. If $F$ takes the bottom row of $\Delta$ to an object $A$ and the top
row to an object $B$ then the diagram $F : \Delta \to \CC$ will be referred to as the \emph{$(A, B)$-diagram in $\CC$}.
An amalgamation problem $F : \Delta \to \CC$ \emph{has a solution in $\CC$} if $F$ has a compatible cocone
in $\CC$. An amalgamation problem $F : \Delta \to \CC$ is \emph{finite} if $\Delta$ is a finite category.
\begin{figure}
  \centering
  \begin{tikzcd}
    {\bullet} & {\bullet} & {\bullet}
    & & B & B & B
  \\
    {\bullet} \arrow[u] \arrow[ur] & {\bullet} \arrow[ur] \arrow[ul] & {\bullet} \arrow[ul] \arrow[u]
    & & A \arrow[u] \arrow[ur] & A \arrow[ur] \arrow[ul] & A \arrow[ul] \arrow[u]
  \\
    & \Delta \arrow[rrrr, "F"]  & & & & \CC  
  \end{tikzcd}
  \caption{An $(A,B)$-diagram in $\CC$ (of shape $\Delta$)}
  \label{nrt.fig.2}
\end{figure}
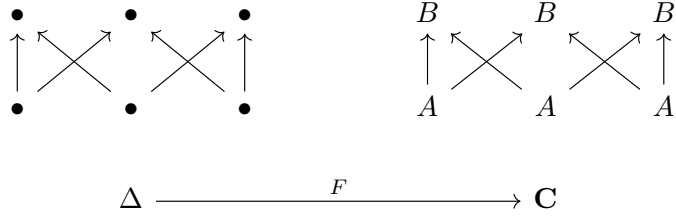
  
Let $\BB$ and $\CC$ be locally small categories such that morphisms in both $\BB$ and $\CC$ are mono
and let $G : \BB \to \CC$ be a faithful functor. For $A, B \in \Ob(\BB)$ and $C \in \Ob(\CC)$ let us
construct a binary category $\Delta_G(A, B; C)$ as follows. Let
$$
  \hom_\CC(G(B), C) = \{e_i : i \in I\}.
$$
Intuitively, for each $e_i \in \hom_\CC(G(B), C)$ we add a copy of $B$ to the category, and whenever $e_i \cdot G(u) = e_j \cdot G(v)$
in $\CC$ for some $u, v \in \hom_\BB(A, B)$ we add a copy of A to the category together with two arrows -- one going into the $i$th copy of $B$
labelled by $u$ and another one going into the $j$th copy of $B$ labelled by~$v$:
\begin{center}
  \begin{tikzcd}[execute at end picture={
                     \draw (-4.75,-1.75) rectangle (4.75,0.5);
                }]
    & & C
  \\
    G(B) \arrow[urr] & G(B) \arrow[ur, near end, "e_i"'] & \ldots & G(B) \arrow[ul, near end, "e_j"] & G(B) \arrow[ull]
  \\
    G(A) \arrow[u] \arrow[ur] & G(A) \arrow[urr, "G(v)"'] \arrow[u, "G(u)"'] & \ldots & G(A) \arrow[ur] \arrow[u] & G(\BB)
  \end{tikzcd}
\end{center}
Formally, let $\Delta = \Delta_G(A, B; C)$ be the binary category whose objects are
$$
  \Ob(\Delta) = I \union \{(u, v, i, j) : i, j \in I; 
                                          u, v \in \hom_\BB(A, B); e_i \cdot G(u) = e_j \cdot G(v)\}
$$
and whose nonidentity arrows are
$$
  \hom_\Delta((u, v, i, j), i) = \{u\} \text{\quad and\quad} \hom_\Delta((u, v, i, j), j) = \{v\}.
$$

\begin{PROP}\label{crt.prop.D-solves-Delta-diag} (cf.~\cite[Theorem~7]{masul-drp})
  Let $\BB$ and $\CC$ be locally small categories such that morphisms in both $\BB$ and $\CC$ are mono
  and let $G : \BB \to \CC$ be a faithful functor. Take any $A, B \in \Ob(\BB)$ and $C \in \Ob(\CC)$ and let
  $F : \Delta_G(A, B; C) \to \BB$ be the diagram in $\BB$ whose action on objects is:
  $$
    F(s) = B \text{\quad and\quad} F((u, v, i, j)) = A
  $$
  for all $s, (u,v,i,j) \in \Ob(\Delta_G(A, B; C))$, and whose action on nonidentity morphisms is $F(g) = g$:
  \begin{center}
    \begin{tikzcd}[column sep=small]
    i &  & j
    & & B &  & B
  \\
      & (u, v, i, j) \arrow[ur, "v"'] \arrow[ul, "u"] &  
    & &   & A \arrow[ur, "v"'] \arrow[ul, "u"] & 
  \\
    & \Delta_G(A, B; C) \arrow[rrrr, "F"]  & & & & \BB
    \end{tikzcd}
  \end{center}
  If $C \longrightarrow (G(B))^{G(A)}_{k,t}$ and $D \in \Ob(\BB)$ together with morphisms $f_i \in \hom_\BB(B, D)$, $i \in I$,
  solves $F : \Delta_G(A, B; C) \to \BB$ so that the corresponding diagram in $\BB$ commutes:
  \begin{center}
    \begin{tikzcd}
        & & D
      \\
        B \arrow[urr] & B \arrow[ur, "f_i"'] & \ldots & B \arrow[ul,"f_j"] & B \arrow[ull]
      \\
        A \arrow[u] \arrow[ur] & A \arrow[urr,"v"] \arrow[u, "u"'] & \ldots & A \arrow[ur] \arrow[u] & 
    \end{tikzcd}
  \end{center}
  then $D \longrightarrow (B)^A_{k, t}$ in $\DD$.
  In particular, $t_\BB(A) \le t_\CC(G(A))$ for all $A \in \Ob(\BB)$.
\end{PROP}

\paragraph{Expansions.}
An \emph{expansion} of a category $\CC$ is a category $\CC^*$ together with a faithful functor
$U : \CC^* \to \CC$ which is surjective on objects.
We shall generally follow the convention that $A, B, C, \ldots$ denote objects from $\CC$
while $A^*, B^*, C^*, \ldots$ denote objects from $\CC^*$.
Since $U$ is injective on hom-sets we may safely assume that
$\hom_{\CC^*}(A^*, B^*) \subseteq \hom_\CC(A, B)$ where $A = U(A^*)$, $B = U(B^*)$.
In particular, $\id_{A^*} = \id_A$ for $A = U(A^*)$. Moreover, it is safe to drop
subscripts $\CC$ and $\CC^*$ in $\hom_\CC(A, B)$ and $\hom_{\CC^*}(A^*, B^*)$, so we shall
simply write $\hom(A, B)$ and $\hom(A^*, B^*)$, respectively.
Let $U^{-1}(A) = \{A^* \in \Ob(\CC^*) : U(A^*) = A \}$. Note that this is not necessarily a set.

An expansion $U : \CC^* \to \CC$ is \emph{reasonable} (cf.~\cite{KPT}) if
for every $e \in \hom(A, B)$ and every $A^* \in U^{-1}(A)$ there is a $B^* \in U^{-1}(B)$ such that
$e \in \hom(A^*, B^*)$:
\begin{center}
    \begin{tikzcd}
      A^* \arrow[r, "e"] \arrow[d, dashed, mapsto, "U"'] & B^* \arrow[d, dashed, mapsto, "U"] \\
      A \arrow[r, "e"'] & B
    \end{tikzcd}
\end{center}
An expansion $U : \CC^* \to \CC$ is \emph{dually reasonable} if $U^\op : (\CC^*)^\op \to \CC^\op$ is reasonable.

An expansion $U : \CC^* \to \CC$ has \emph{restrictions} if
for every $B^* \in \Ob(\CC^*)$ and every $e \in \hom(A, U(B^*))$ there is an $A^* \in U^{-1}(A)$
such that $e \in \hom(A^*, B^*)$:
\begin{center}
    \begin{tikzcd}
      A^* \arrow[r, "e"] \arrow[d, dashed, mapsto, "U"'] & B^* \arrow[d, dashed, mapsto, "U"] \\
      A \arrow[r, "e"'] & B
    \end{tikzcd}
\end{center}
If such an $A^* \in U^{-1}(A)$ is unique, we say that the expansion $U : \CC^* \to \CC$ has \emph{unique restrictions}.
We denote this unique $A^*$ by $\restr {B^*} e$ and refer to it as \emph{the restriction of $B^*$ along~$e$}.
An expansion $U : \CC^* \to \CC$ has \emph{(unique) dual restrictions} if $U^\op : (\CC^*)^\op \to \CC^\op$
has (unique) restrictions.

Let $\CC$ and $\CC^*$ be categories of structures.
An expansion $U : \CC^* \to \CC$ is \emph{precompact} (cf.~\cite{vanthe-more}) if
$U^{-1}(A)$ is a set for all $A \in \Ob(\CC)$, and it is a finite set for all finite $A \in \Ob(\CC)$.
(See~\cite{masul-kpt} for a general notion that applies to arbitrary categories.)
Following \cite{vanthe-more} we say that $U : \CC^* \to \CC$ \emph{has the expansion property}
if for every $A \in \Ob(\CC)$ there exists a $B \in \Ob(\CC)$ such that $A^* \to B^*$ for all
$A^*, B^* \in \Ob(\CC^*)$ with $U(A^*) = A$ and $U(B^*) = B$.
We say that $U : \CC^* \to \CC$ \emph{has the dual expansion property}
if $U^\op : (\CC^*)^\op \to \CC^\op$ has the expansion property.

\begin{THM}\label{sbrd.thm.small1}\label{sbrd.thm.small2}\cite{dasilvabarbosa,masul-kpt}
  $(a)$ Let $U : \CC^* \to \CC$ be a reasonable expansion with restrictions and assume that all the morphisms in $\CC$
  are mono. For any $A \in \Ob(\CC)$ we then have:
  $$
    t_{\CC}(A) \le \sum_{A^* \in U^{-1}(A)} t_{\CC^*}(A^*).
  $$
  Consequently, if $U^{-1}(A)$ is finite and
  $t_{\CC^*}(A^*) < \infty$ for all $A^* \in U^{-1}(A)$ then $t_\CC(A) < \infty$.

  $(b)$ If in addition to that we also assume that $\CC^*$ is directed and that $U : \CC^* \to \CC$ is an expansion with unique restrictions
  and with the expansion property then for any $A \in \Ob(\CC)$:
  $$
    t_{\CC}(A) = \sum_{A^* \in U^{-1}(A)} t_{\CC^*}(A^*).
  $$
\end{THM}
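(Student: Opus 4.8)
The plan is to prove the two inequalities separately, with the equality in the second part following by combining them. Throughout I write $U^{-1}(A) = \{A_1^*, \ldots, A_m^*\}$ and $t_i = t_{\CC^*}(A_i^*)$; if $U^{-1}(A)$ is infinite or some $t_i = \infty$ then the right-hand side is $\infty$ and the upper bound is vacuous, so for the substantive case I assume $m < \infty$ and each $t_i < \infty$. The one structural fact I would isolate first is that, because $U$ has restrictions, for any expansion $B^*$ of any $B$ the homset decomposes as $\hom_\CC(A, B) = \bigcup_{i=1}^m \hom_{\CC^*}(A_i^*, B^*)$: every $e : A \to B$ lifts to some $A_i^* \to B^*$ by the restriction property, while every $\CC^*$-morphism is already a $\CC$-morphism. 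Under unique restrictions this union is \emph{disjoint}, and composition preserves the index, since $f \in \hom_{\CC^*}(A_i^*, B^*)$ and $w \in \hom_{\CC^*}(B^*, C^*)$ give $w \cdot f \in \hom_{\CC^*}(A_i^*, C^*)$, so the unique restriction $\restr{C^*}{(w \cdot f)}$ is again $A_i^*$. These two observations drive both bounds; the standing hypotheses (mono $\CC$-morphisms, reasonable expansion) are what make the partition-relation machinery and this decomposition behave like honest sets of copies.

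For the upper bound I would fix $B$ and $k$, choose any expansion $B^* = D_0^*$ of $B$, and build a tower $D_0^* \to D_1^* \to \cdots \to D_m^* =: C^*$ in $\CC^*$ by successively invoking $t_{\CC^*}(A_i^*) = t_i$: for each $i$ pick $D_i^*$ with $D_i^* \longrightarrow (D_{i-1}^*)^{A_i^*}_{k, t_i}$, and set $C = U(C^*)$. This object is fixed before the colouring is revealed. Given a $k$-colouring $\chi$ of $\hom_\CC(A, C)$ I would extract the reducing morphisms top-down: having chosen $w_m, \ldots, w_{i+1}$, form $u_i = w_m \cdots w_{i+1} : D_i^* \to C^*$, pull $\chi$ back to the colouring $g \mapsto \chi(u_i \cdot g)$ of $\hom_{\CC^*}(A_i^*, D_i^*)$, and let $w_i : D_{i-1}^* \to D_i^*$ witness the $i$-th partition relation, so that this pulled-back colouring takes at most $t_i$ values on $w_i \cdot \hom_{\CC^*}(A_i^*, D_{i-1}^*)$. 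The composite $w = w_m \cdots w_1 : B^* \to C^*$ is the sought $\CC$-morphism $B \to C$. Writing $w \cdot f = u_i \cdot w_i \cdot (w_{i-1}\cdots w_1 \cdot f)$ for $f \in \hom_{\CC^*}(A_i^*, B^*)$ shows that $\chi(w \cdot f)$ lands in the at-most-$t_i$-element set controlled at stage $i$; taking the union over the cover of $\hom_\CC(A,B)$ then yields $|\chi(w \cdot \hom_\CC(A,B))| \le \sum_i t_i$, i.e. $C \longrightarrow (B)^A_{k, \sum_i t_i}$. Note that here plain restrictions suffice, since overlaps only help the union bound.

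For the lower bound I would produce a single pair $(k, B)$ witnessing $t_\CC(A) \ge \sum_i t_i$. Exactness of each $t_i$ supplies $k_i$ and an object $B_i^*$ with $C^* \nlongrightarrow (B_i^*)^{A_i^*}_{k_i, t_i - 1}$ for every $C^*$; that is, every $C^*$ carries a $k_i$-colouring of $\hom_{\CC^*}(A_i^*, C^*)$ that no morphism $B_i^* \to C^*$ can compress below $t_i$ colours (and $t_i \ge 1$ forces $\hom_{\CC^*}(A_i^*, B_i^*) \ne \0$). Using directedness of $\CC^*$ I would take a common upper bound $B^\sharp$ of $B_1^*, \ldots, B_m^*$, and then apply the expansion property to $U(B^\sharp)$ to obtain $B \in \Ob(\CC)$ such that $B^\sharp$, hence each $B_i^*$, maps into \emph{every} expansion of $B$. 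Put $k = \sum_i k_i$. Given an arbitrary $C$, choose an expansion $C^*$ and colour $\hom_\CC(A, C)$ by assigning each type-$i$ morphism (those with $\restr{C^*}{e} = A_i^*$) the colour prescribed by the $i$-th witness colouring, placed in the $i$-th block of a partition of the $k$ colours into disjoint palettes. For any $w : B \to C$, set $\hat B^* = \restr{C^*}{w}$; composing the fixed map $B_i^* \to \hat B^*$ with $w$ and invoking the witness property forces at least $t_i$ colours from palette $i$ to appear, and by disjointness of palettes $|\chi(w \cdot \hom_\CC(A,B))| \ge \sum_i t_i$. Thus $C \nlongrightarrow (B)^A_{k, \sum_i t_i - 1}$ for all $C$, which gives the matching lower bound and hence equality.

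The hard part will be the lower bound, and specifically the coordination accomplished by directedness and the expansion property: I must arrange a single base object $B$ that simultaneously realises every expansion type of $A$ inside each of its own expansions and dominates all the individual witnesses $B_i^*$, so that every palette is forced to survive intact regardless of which $\hat B^* = \restr{C^*}{w}$ the adversary's $w$ produces. Type preservation under composition, a consequence of unique restrictions, is what prevents the per-type contributions from interfering, and disjointness of palettes is what lets them add rather than merge; checking these compatibilities against the definitions is the delicate step. By contrast, the upper-bound tower is essentially bookkeeping once the restriction decomposition of $\hom_\CC(A,B)$ is in hand.
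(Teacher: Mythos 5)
Your proof is correct, and since the paper does not prove this theorem but quotes it from \cite{dasilvabarbosa,masul-kpt}, the right comparison is with those sources: your argument is essentially their standard one, namely the fibre decomposition $\hom_\CC(A,B)=\bigcup_i \hom_{\CC^*}(A_i^*,B^*)$ given by restrictions plus a tower in $\CC^*$ treating one expansion of $A$ per level for the upper bound, and, for the lower bound, minimality witnesses for each $t_{\CC^*}(A_i^*)$ combined by directedness, pushed into every expansion of a single base object via the expansion property, and kept additive by disjoint palettes together with preservation of restriction type under composition (unique restrictions). Two minor remarks: your dismissal of the infinite case as vacuous is accurate only for the inequality, not for the equality --- when $U^{-1}(A)$ is infinite or some $t_{\CC^*}(A^*)=\infty$ one still owes the claim $t_\CC(A)=\infty$, which your palette argument does deliver when run on arbitrarily large finite subfamilies of $U^{-1}(A)$ (respectively, on witnesses of failure at arbitrarily large thresholds); and your proof never actually invokes the hypotheses that morphisms of $\CC$ are mono or that the expansion is reasonable, which is harmless (you prove a formally stronger statement in the hom-set formulation of degrees used here) but worth noticing.
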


\begin{PROP}\label{drdrs.prop.t-SetFin-n}
  For every integer $n \ge 1$ the category $\FinSetSurj^n$ has dual small Ramsey degrees.
\end{PROP}
\begin{proof}
  It suffices to show that $\FinSetSurj$ has dual small Ramsey degrees because the statement then follows from
  the dual of Theorem~\ref{rament.thm.srd-mult}. To this end, let $U : \CHrs \to \FinSetSurj$ be the forgetful functor
  which forgets the ordering. It is easy to check that $U$ is a reasonable expansion with restrictions, so
  for every nonempty finite set $A$ Theorem~\ref{sbrd.thm.small1} ensures that $t^\partial(A) < \infty$ in $\FinSetSurj$.
  Note that $t^\partial(\0) = 1$ in $\FinSetSurj$ by definition. This completes the proof.
\end{proof}

\paragraph{Infinite Ramsey phenomena.}
The Infinite Dual Ramsey Theorem of Carlson and Simpson \cite{carlson-simpson-1984}
is out main tool for establishing existence of dual big Ramsey degrees, but its presentation
requires additional infrastructure.

A category $\CC$ is \emph{enriched over $\Top$} if each $\hom_\CC(A, B)$ is a topological space
and the composition $\Boxed\cdot : \hom_\CC(B, C) \times \hom_\CC(A, B) \to \hom_\CC(A, C)$ is continuous
for all $A, B, C \in \Ob(\CC)$.
(Note that a category enriched over $\Top$ has to be locally small.)

\begin{EX}
  $(a)$ Any locally small category can be thought of as a category enriched over $\Top$ by taking each
  hom-set to be a discrete space. We shall refer to this as the \emph{discrete enrichment}.
  
  $(b)$ The category $\Set$ can be enriched over $\Top$ in a nontrivial way by endowing
  each hom-set $\hom_{\Set}(A, B)$ with the pointwise convergence topology.
  Whenever we refer to $\Set$ as a category enriched over $\Top$ we have this particular
  enrichment in mind. The same applies to $\SetSurj$, $\Set^{\le \omega}$ and $\SetSurj^{\le\omega}$.
  
  $(c)$ The category $\WchRs$ can be enriched over $\Top$ in a nontrivial way as follows:
  each hom-set $\hom_{\WchRs}((A, \Boxed<), (B, \Boxed<))$ inherits the topology from the Tychonoff topology on $B^A$
  with both $A$ and $B$ discrete. Whenever we refer to $\WchRs$ as a category enriched over $\Top$ we have this particular
  enrichment in mind. The same applies to $\WchRs^{\le \omega}$.
\end{EX}

For a topological space $X$ and an integer $k \ge 2$ a \emph{Borel $k$-coloring of $X$} is any mapping $\chi : X \to k$
such that $\chi^{-1}(i)$ is a Borel set for all~$i \in k$.

Let $\CC$ be a category enriched over $\Top$.
For $A, B, C \in \Ob(\CC)$ we write $C \Borel\longrightarrow (B)^{A}_{k, n}$
to denote that for every Borel $k$-coloring $\chi : \hom_\CC(A, C) \to k$
there is a morphism $w \in \hom_\CC(B, C)$ such that $|\chi(w \cdot \hom_\CC(A, B))| \le n$.
For $A, S \in \Ob(\CC)$ let $T_\CC(A, S)$ denote the least positive integer $n$ such that
$S \Borel\longrightarrow (S)^{A}_{k, n}$ for all $k \ge 2$, if such an integer exists.
Otherwise put $T_\CC(A, S) = \infty$. The number $T_\CC(A, S)$ will be referred to as the
\emph{big Ramsey degree of $A$ in $S$ with respect to Borel colorings}.
By straightforward dualization, we can introduce \emph{big dual Ramsey degrees with respect to Borel colorings}
$T^\partial_\CC(A, S)$ by $T^\partial_\CC(A, S) = T_{\CC^\op}(A, S)$.
The Infinite Dual Ramsey Theorem of Carlson and Simpson~\cite{carlson-simpson-1984}
now takes the following form.

\begin{THM}[The Infinite Dual Ramsey Theorem~\cite{carlson-simpson-1984}]\label{rpemklei.thm.IDRT}
  In the category $\WchRs$ enriched over $\Top$ as above,
  $T^\partial(A, \omega) = 1$ for every finite chain~$A$.
\end{THM}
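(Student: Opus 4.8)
The plan is to read the stated equality as a direct categorical reformulation of the Carlson--Simpson Dual Ramsey Theorem: once the opposite-category arrow is unwound into the classical language of partitions, the conclusion is exactly their theorem, which I would then quote. Since $T^\partial(A, \omega)$ is by definition a least \emph{positive} integer, we automatically have $T^\partial(A, \omega) \ge 1$, so it suffices to prove the upper bound, namely that $\omega \Borel\longrightarrow (\omega)^A_{k,1}$ holds in $\WchRs^\op$ for every $k \ge 2$.

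First I would unwind the opposite-category bookkeeping. Put $n = |A|$. The two relevant homsets are $\hom_{\WchRs^\op}(A, \omega) = \RSurj(\omega, A)$ and $\hom_{\WchRs^\op}(\omega, \omega) = \RSurj(\omega, \omega)$, and since composition in $\CC^\op$ is reversed, for $w \in \RSurj(\omega, \omega)$ and $f \in \RSurj(\omega, A)$ the composite $w \cdot_{\CC^\op} f$ is the honest composite $f \circ w \in \RSurj(\omega, A)$. The target thus reads: for every Borel $k$-colouring $\chi : \RSurj(\omega, A) \to k$ there is a $w \in \RSurj(\omega, \omega)$ such that $\chi$ is constant on the family $\{\, f \circ w : f \in \RSurj(\omega, A) \,\}$ (which is nonempty, so this means a single colour).

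The bridge to Carlson--Simpson is the standard dictionary between rigid surjections and partitions. A rigid surjection $\omega \to A$ is exactly a partition of $\omega$ into $n$ nonempty blocks, the rigidity condition forcing the labels $a_1 < \cdots < a_n$ to list the blocks in increasing order of their minima; similarly $\RSurj(\omega, \omega)$ is the set of partitions of $\omega$ into infinitely many blocks. Under this dictionary each fibre of $f \circ w$ is a union of fibres of $w$, so as $f$ ranges over $\RSurj(\omega, A)$ the family $\{\, f \circ w \,\}$ runs through precisely the $n$-block coarsenings of $w$. I would also verify that the subspace topology on $\RSurj(\omega, A) \subseteq A^\omega$ (Tychonoff with $A$ discrete), which is the enrichment carried over to $\WchRs^\op$, coincides with the usual Polish topology on $n$-block partitions, so that ``Borel'' means the same on both sides. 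Granting the dictionary, the Infinite Dual Ramsey Theorem of Carlson and Simpson---every Borel colouring of the $n$-block partitions of $\omega$ admits a partition of $\omega$ into infinitely many blocks all of whose $n$-block coarsenings receive a single colour---produces the desired $w$, yielding $T^\partial(A, \omega) \le 1$.

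I expect the genuine mathematical depth to reside entirely in the cited Carlson--Simpson theorem, so the remaining (routine but error-prone) task is purely one of bookkeeping: tracking the two reversals at once---the opposite-category composition and the minimum-ordering convention built into rigidity---and confirming that the pointwise-convergence enrichment on $\WchRs$ restricts to exactly the Borel structure that Carlson and Simpson colour. Lining up the direction of every arrow with the labelling of every block is the step where a slip is most likely.
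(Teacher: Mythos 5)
Your proposal is correct and takes essentially the same approach as the paper: the paper offers no proof at all, presenting the theorem precisely as the categorical reformulation of the cited Carlson--Simpson result, which is exactly the translation you carry out. The dictionary you describe (rigid surjections $\omega \to A$ as $n$-block partitions listed by increasing minima, composition $f \circ w$ as coarsening of the partition induced by $w$, and the pointwise-convergence topology on $\RSurj(\omega, A) \subseteq A^\omega$ matching the Borel structure coloured by Carlson and Simpson) is the intended content of the restatement.
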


Let $\CC$ be a category of relational structures over the same relational language with quotient maps as morphisms.
We say that an $\calS \in \Ob(\CC)$ is \emph{projectively universal} for the category $\CC^\fin$ of all finite
structures in $\CC$ if $\hom_\CC(\calS, \calA) \ne \0$ for all $\calA \in \Ob(\CC^\fin)$.
We say that $\calC, \calD \in \Ob(\CC)$ are \emph{bi-projectible} if there is a quotient map
$\calC \to \calD$ and a quotient map $\calD \to \calC$. This notion is relevant because of the
following simple but useful fact:

\begin{LEM} \label{drdrs.lem.bi-proj-general}
  Let $\CC$ be a category of relational structures over the same relational language with quotient maps as morphisms.
  If $\calC, \calD \in \Ob(\CC)$ are bi-projectible then
  $T_\CC^\partial(\calA, \calC) = T_\CC^\partial(\calA, \calD)$ for every finite $\calA \in \Ob(\CC)$.
\end{LEM}

Let $\CC$ be a category and $A, B, S \in \Ob(\CC)$. We say that
$S$ \emph{is weakly homogeneous for $(A, B)$}, if there exist
$f \in \hom(A, B)$ and $g \in \hom(S, S)$ such that $g \cdot \hom(A, S) \subseteq \hom(B, S) \cdot f$.
\begin{center}
  \begin{tikzcd}
    B \arrow[rr, bend left=20] \arrow[rr, bend right=20] \arrow[ddrr, phantom, "\supseteq" description] & & S \\
    & \\
    A \arrow[uu, "f"] \arrow[rr, bend left=20] \arrow[rr, bend right=20] & & S \arrow[uu, "g"']
  \end{tikzcd}
\end{center}

\begin{THM}\label{rdbas.thm.mon-bigdeg}
  Let $\CC$ be a category with enriched over $\Top$ whose morphisms are mono and let $A, B \in \Ob(\CC)$ be such that $A \to B$.
  Then $T(A, S) \le T(B, S)$ for every $S \in \Ob(\CC)$ which is weakly homogeneous for $(A, B)$.
\end{THM}
\begin{proof}
  If $T(B, S) = \infty$ the statement is trivially true, so assume that $T(B, S) = n \in \NN$.
  Take any $S$ which is weakly homogeneous for $(A, B)$. Then there exist
  $f \in \hom(A, B)$ and $g \in \hom(S, S)$ such that $g \cdot \hom(A, S) \subseteq \hom(B, S) \cdot f$.
  Take any $k \in \NN$ and let $\chi : \hom(A, S) \to k$ be a Borel coloring.
  Define $\chi' : \hom(B, S) \to k$ by $\chi'(h) = \chi(h \cdot f)$.
  Note that this is a Borel coloring.
  Then there is a $w \in \hom(S, S)$ such that
  $|\chi'(w \cdot \hom(B, S))| \le n$. The definition of $\chi'$ then yields
  $|\chi(w \cdot \hom(B, S) \cdot f)| \le n$. Therefore,
  $|\chi(w \cdot g \cdot \hom(A, S))| \le n$ because $g \cdot \hom(A, S) \subseteq \hom(B, S) \cdot f$.
\end{proof}

Let $U : \CC^* \to \CC$ be an expansion with unique restrictions.
We say that $S^* \in \Ob(\CC^*)$ is \emph{self-similar with respect to $U$} \cite{masul-rdbas} if the following holds:
for every $w \in \hom_\CC(S, S)$ there is a morphism $v \in \hom_{\CC^*}(S^*, \restr{S^*}{w})$,
where $S = U(S^*)$.
\begin{center}
  \begin{tikzcd}
    S^* \arrow[r, "\exists v"] & \restr{S^*}{w} \arrow[r, "w"] \arrow[d, mapsto, dashed, "U"'] & S^* \arrow[d, mapsto, dashed, "U"]\\
       & S \arrow[r, "\forall w"] & S
  \end{tikzcd}
\end{center}
We say that $S^* \in \Ob(\CC^*)$ is \emph{dually self-similar with respect to $U$} is $S^*$ is self-similar with respect to
$U^\op : (\CC^*)^\op \to \CC^\op$.

\begin{THM}\label{sbrd.thm.big1-dual-borel-weaker} (cf.~\cite{masul-drp-algs})
  Let $\CC$ and $\CC^*$ be categories enriched over $\Top$ such that morphisms in both $\CC$ and $\CC^*$
  are mono.
  Let $U : \CC^* \to \CC$ be an expansion with restrictions and assume that $\hom_{\CC^*}(A^*, B^*)$ is
  a Borel set in $\hom_{\CC}(U(A^*), U(B^*))$ for all $A^*, B^* \in \Ob(\CC^*)$.
  For $A \in \Ob(\CC)$, $S^* \in \Ob(\CC^*)$ and $S = U(S^*)$, if $U^{-1}(A)$ is finite then:
  $$
    T_\CC(A, S) \le \sum_{A^* \in U^{-1}(A)} T_{\CC^*}(A^*, S^*).
  $$
  If in addition to that we also assume that $U : \CC^* \to \CC$ is an expansion with unique restrictions and that
  $S^*$ is self-similar with respect to $U$ then
  $$
    T_\CC(A, S) = \sum_{A^* \in U^{-1}(A)} T_{\CC^*}(A^*, S^*).
  $$
\end{THM}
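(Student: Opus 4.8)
Note first that the statement is about ordinary big Ramsey degrees with respect to Borel colourings, so I would argue directly inside the fixed object $S$ rather than passing to larger objects as in the finite case. Write $U^{-1}(A) = \{A_1^*, \ldots, A_m^*\}$, which is finite by hypothesis, put $n_i = T_{\CC^*}(A_i^*, S^*)$ and $N = \sum_i n_i$; if some $n_i = \infty$ both assertions are trivial, so I may assume every $n_i$ is finite. The one structural fact to extract at the outset is that, since $U$ has restrictions, each $e \in \hom_\CC(A, S)$ lifts into $\hom_{\CC^*}(A_i^*, S^*)$ for some $i$ (apply the definition with $B^* = S^*$), whence $\hom_\CC(A, S) = \bigcup_i \hom_{\CC^*}(A_i^*, S^*)$ is a finite union of Borel sets. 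Since $w \cdot \hom_\CC(A, S) = \bigcup_i w \cdot \hom_{\CC^*}(A_i^*, S^*)$ for every $w \in \hom_\CC(S, S)$, the subadditivity $|\chi(w \cdot \hom_\CC(A,S))| \le \sum_i |\chi(w \cdot \hom_{\CC^*}(A_i^*, S^*))|$ reduces the upper bound to producing a single $w$ that simultaneously keeps the colour count on each piece $w \cdot \hom_{\CC^*}(A_i^*, S^*)$ below $n_i$.

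For the upper bound I would construct such a $w$ by iterating the big Ramsey property of $\CC^*$ one index at a time. Fix a Borel $k$-colouring $\chi$ of $\hom_\CC(A, S)$ and set $v_0 = \id_{S^*}$. Inductively, given $v_{j-1} \in \hom_{\CC^*}(S^*, S^*)$ with $|\chi(v_{j-1} \cdot \hom_{\CC^*}(A_i^*, S^*))| \le n_i$ for all $i < j$, colour $\hom_{\CC^*}(A_j^*, S^*)$ by $e \mapsto \chi(v_{j-1} \cdot e)$; this is Borel because composition in $\CC^*$ is continuous (enrichment over $\Top$), $\chi$ is Borel, and the piece is a Borel subspace. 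Applying $T_{\CC^*}(A_j^*, S^*) = n_j$ yields $w_j \in \hom_{\CC^*}(S^*, S^*)$ with $|\chi(v_{j-1} \cdot w_j \cdot \hom_{\CC^*}(A_j^*, S^*))| \le n_j$, and I set $v_j = v_{j-1} \cdot w_j$. The point that makes the induction go through is the containment $w_j \cdot \hom_{\CC^*}(A_i^*, S^*) \subseteq \hom_{\CC^*}(A_i^*, S^*)$ (a $\CC^*$-endomorphism of $S^*$ postcomposed with $\CC^*$-morphisms into $S^*$ stays in the same piece), so that $v_j \cdot \hom_{\CC^*}(A_i^*, S^*) \subseteq v_{j-1} \cdot \hom_{\CC^*}(A_i^*, S^*)$ and the bounds for $i < j$ are preserved. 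Taking $w = v_m$ and invoking subadditivity gives $|\chi(w \cdot \hom_\CC(A, S))| \le N$, i.e. $S \Borel\longrightarrow (S)^{A}_{k, N}$ for every $k$, which is the claimed inequality.

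For the equality I would first upgrade the decomposition to a disjoint one: with unique restrictions each $e$ lies in exactly one $\hom_{\CC^*}(A_i^*, S^*)$, namely $A_i^* = \restr{S^*}{e}$, so $\hom_\CC(A, S) = \bigsqcup_i \hom_{\CC^*}(A_i^*, S^*)$. For each $i$ choose, by minimality of $n_i$, a Borel colouring $\chi_i : \hom_{\CC^*}(A_i^*, S^*) \to k_i$ witnessing $S^* \Borel\nlongrightarrow (S^*)^{A_i^*}_{k_i, n_i - 1}$, so that $|\chi_i(v \cdot \hom_{\CC^*}(A_i^*, S^*))| \ge n_i$ for every $v \in \hom_{\CC^*}(S^*, S^*)$; then glue the $\chi_i$ into one Borel colouring $\chi$ of $\hom_\CC(A, S)$ using pairwise disjoint palettes (objects of $U^{-1}(A)$ not mapping into $S^*$ contribute an empty piece and are handled trivially). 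Given an arbitrary $w \in \hom_\CC(S, S)$, self-similarity of $S^*$ supplies $v \in \hom_{\CC^*}(S^*, \restr{S^*}{w})$, and since $w \in \hom_{\CC^*}(\restr{S^*}{w}, S^*)$ by definition of the restriction, the composite $w \cdot v$ lies in $\hom_{\CC^*}(S^*, S^*)$. Hence $(w \cdot v) \cdot \hom_{\CC^*}(A_i^*, S^*)$ is contained in the $i$-th piece, is coloured by $\chi_i$, and is a subset of $w \cdot \hom_\CC(A, S)$, so it already exhibits at least $n_i$ colours of the $i$-th palette inside $\chi(w \cdot \hom_\CC(A, S))$. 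Disjointness of the palettes then forces $|\chi(w \cdot \hom_\CC(A, S))| \ge N$ for all $w$, giving $T_\CC(A, S) \ge N$ and, with the first part, equality.

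I expect the main obstacle to be the upper-bound iteration, where two points must be checked carefully: that left composition by a $\CC^*$-endomorphism cannot increase the colour count on the already-treated pieces (the containment $w_j \cdot \hom_{\CC^*}(A_i^*, S^*) \subseteq \hom_{\CC^*}(A_i^*, S^*)$), and that each auxiliary colouring $e \mapsto \chi(v_{j-1} \cdot e)$ stays Borel in the $\CC^*$-topology — precisely where continuity of composition and the Borel-subspace hypothesis on $\hom_{\CC^*}(A_j^*, S^*)$ are used. In the equality direction the one genuinely new idea is the self-similarity trick: an arbitrary $\CC$-endomorphism $w$ of $S$ need not respect the expansion, but precomposing it with the self-similarity morphism $v$ repairs it into a $\CC^*$-endomorphism of $S^*$, which is exactly what transfers the individual lower bounds for the $\chi_i$ to $\chi$.
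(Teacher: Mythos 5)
Your proof is correct and is essentially the argument the paper intends: the paper's own proof merely defers to the corresponding statements in \cite{masul-rdbas}, noting that the only new ingredient is that all colourings constructed along the way must be Borel, which follows from the Borel-subspace hypothesis and continuity of composition. Your reconstruction --- decomposing $\hom_\CC(A,S)$ into the pieces $\hom_{\CC^*}(A_i^*,S^*)$ via (unique) restrictions, iterating the $\CC^*$-degrees piece by piece with colourings of the form $e \mapsto \chi(v_{j-1}\cdot e)$ for the upper bound, and gluing disjoint-palette witness colourings repaired by the self-similarity morphism for the lower bound --- is exactly that argument, with the Borel measurability checks inserted precisely where the paper says they are needed.
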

\begin{proof}
  The proof is analogous to the proof of the corresponding statements in \cite{masul-rdbas}. The only
  difference is that we now have to ensure that all the colorings we construct along the way are
  Borel. But that follows immediately from the assumption that $\hom_{\CC^*}(A^*, B^*)$ is
  a Borel set in $\hom_{\CC}(U(A^*), U(B^*))$ for all $A^*, B^* \in \Ob(\CC^*)$ and the fact that
  the composition of morphisms is continuous.
\end{proof}

Fix an expansion $U : \CC^* \to \CC$ and for each pair of objects $A^*, B^* \in \Ob(\CC^*)$ let
$U_{A^*,B^*}$ denote the set-function
$$
  U_{A^*,B^*} : \hom_{\CC^*}(A^*, B^*) \to \hom_{\CC}(U(A^*), U(B^*))
$$
between the corresponding hom-sets. Since $U$ is an expansion, every $U_{A^*, B^*}$ is injective,
so by the Lusin-Suslin Theorem, in order to show that $U$ preserves Borel sets it suffices to show that
each $U_{A^*, B^*}$ is continuous.

For sets $A$ and $B$ let $\Map(A, B) = B^A$ denote the set of all the functions $A \to B$, and let $\Surj(A, B)$
denote the set of all the surjective functions $A \to B$.
  
\begin{PROP}\label{drdrs.prop.T-Set-n}
  $(a)$ $T^\partial_{\SetSurj}(A, \omega) < \infty$ for every nonempty finite set $A$.

  \medskip

  $(b)$ For every nonempty finite set $A$ there exists a positive integer $t$ such that for every finite Borel
  coloring $\chi : \Map(\omega, A) \to k$ there exists a surjective map $r : \omega \to \omega$ such that
  $|\chi(\Map(\omega, A) \circ r)| \le t$.

  \medskip

  $(c)$ For every integer $n \ge 1$ and nonempty finite sets $A_1$, \ldots, $A_n$ we have
  $
    T^\partial_{\SetSurj^n}\big((A_1, \ldots, A_n), (\omega, \ldots, \omega)\big) < \infty,
  $
  where $\SetSurj^n = \SetSurj \times \ldots \times \SetSurj$ ($n$ times).
\end{PROP}
\begin{proof}
  $(a)$ Let $A$ be a nonempty finite set and let $\CC$ be the full subcategory of $\SetSurj^{\le\omega}$ spanned by
  nonempty sets. It is clear that $T^\partial_{\SetSurj}(A, \omega) = T^\partial_{\CC}(A, \omega)$ for every
  nonempty finite set $A$, so let us show that $T^\partial_{\CC}(A, \omega) < \infty$.
  Let $U : \WchRs^{\le\omega} \to \CC$ be the forgetful functor that forgets the ordering.
  It is easy to see that $U$ is an expansion with restrictions, and that $U_{A^*,B^*}$ is continuous for all $A^*, B^* \in \Ob(\WchRs^{\le \omega})$.
  Moreover, for every nonempty finite set $A$ we have that $|U^{-1}(A)|$ is finite.
  So, if $A$ is a nonempty finite set then $T^\partial_{\CC}(A, \omega) < \infty$ by Theorem~\ref{sbrd.thm.big1-dual-borel-weaker}.

  $(b)$ Let $A$ be a nonempty finite set. Let us enumerate all the nonempty subsets of $A$ as
  $B_1$, $B_2$, \ldots, $B_m$ and let $t_i = T^\partial_{\SetSurj}(B_i, \omega)$, $1 \le i \le m$. Note that by $(a)$
  all the $t_i$'s are positive integers, so $t = t_1 + \ldots + t_m$ is also a positive integer.
  
  Let $\chi : \Map(\omega, A) \to k$ be a finite Borel coloring. Clearly, $\Map(\omega, A) = \bigcup_{i=1}^m \Surj(\omega, B_i)$ and this is a disjoint union.
  Since $\Surj(\omega, B_i)$ is a Borel subset of $\Map(\omega, A)$ all the restrictions
  $$
    \chi_i : \Surj(\omega, B_i) \to k : f \mapsto \chi(f)
  $$
  of $\chi$ are Borel colorings, $1 \le i \le m$. Let us inductively construct a sequence $q_1, q_2, \ldots, q_m$ of surjections
  $\omega \to \omega$ as follows.

  To start the induction, recall that $\chi_1 : \Surj(\omega, B_1) \to k$ is a Borel coloring. 
  Since $t_1 = T^\partial_{\SetSurj}(B_1, \omega)$, there is a surjection $q_1 : \omega \to \omega$ such that
  $$
    |\chi_1(\Surj(\omega, B_1) \circ q_1)| = t_1.
  $$
  For the induction step assume that $q_1, \ldots, q_{j-1}$ have been constructed
  and consider the Borel coloring $\gamma_j : \Surj(\omega, B_j) \to k$ given by
  $$
    \gamma_j(f) = \chi_j(f \circ q_{j-1} \circ \ldots \circ q_1).
  $$
  Since $t_j = T^\partial_{\SetSurj}(B_j, \omega)$, there is a surjection $q_j : \omega \to \omega$ such that
  $$
    |\gamma_j(\Surj(\omega, B_j) \circ q_j)| = t_j.
  $$

  Let $r = q_m \circ q_{m-1} \circ \ldots \circ q_1$. Then $r$ is clearly a surjection and
  $$
    \chi(\Map(\omega, A) \circ r) = \bigcup\nolimits_{j=1}^m \chi_j(\Surj(\omega, B_j) \circ r).
  $$
  It is easy to see that for all $1 \le j \le m$:
  $$
    \Surj(\omega, B_j) \circ r \subseteq \Surj(\omega, B_j) \circ q_j \circ q_{j-1} \circ \ldots \circ q_1,
  $$
  whence
  \begin{multline*}
    \bigcup\nolimits_{j=1}^m \chi_j(\Surj(\omega, B_j) \circ r) \subseteq\\
    \subseteq \bigcup\nolimits_{j=1}^m \chi_j(\Surj(\omega, B_j) \circ q_j \circ q_{j-1} \circ \ldots \circ q_1) =\\
    = \bigcup\nolimits_{j=1}^m \gamma_j(\Surj(\omega, B_j) \circ q_j),
  \end{multline*}
  where, to keep the notation simple, we let $\gamma_1 = \chi_1$. Finally,
  $$
    |\chi(\Map(\omega, A) \circ r)| \le \sum\nolimits_{j=1}^m |\gamma_j(\Surj(\omega, B_j) \circ q_j)| = \sum\nolimits_{j=1}^m t_j = t,
  $$
  which concludes the proof of $(b)$.

  $(c)$
  For $f_1 \in \Map(B, A_1)$, \ldots, $f_n \in \Map(B, A_n)$ by $\langle f_1, \ldots, f_n\rangle$ we denote a function
  in $\Map(B, A_1 \times \ldots \times A_n)$ given by
  $$
    \langle f_1, \ldots, f_n \rangle(b) = (f_1(b), \ldots, f_n(b)).
  $$
  Fix an integer $n \ge 1$ and nonempty finite sets $A_1$, \ldots, $A_n$, and
  let $t$ be the positive integer provided by $(b)$ for $A_1 \times \ldots \times A_n$.
  Take any finite Borel coloring
  $$
    \chi : \hom_{\SetSurj^n}\big((\omega, \ldots, \omega), (A_1, \ldots, A_n)\big) \to k.
  $$
  Define $\gamma : \Map(\omega, A_1 \times \ldots \times A_n) \to k$
  so that for $f_1 \in \Surj(\omega, A_1)$, \ldots, $f_n \in \Surj(\omega, A_n)$ we have
  $$
    \gamma\big(\langle f_1, \ldots, f_n\rangle\big) = \chi(f_1, \ldots, f_n),
  $$
  and $\gamma(g) = 0$ otherwise. The coloring $\gamma$ is Borel because $\chi$ is a Borel coloring and the map
  $$
    \langle -, \ldots, -\rangle : \Surj(\omega, A_1) \times \ldots \times \Surj(\omega, A_n) \to \Map(\omega, A_1 \times \ldots \times A_n)
  $$
  is continuous and injective. Here we use the fact that
  $$
    \hom_{\SetSurj^n}\big((\omega, \ldots, \omega), (A_1, \ldots, A_n)\big) = \Surj(\omega, A_1) \times \ldots \times \Surj(\omega, A_n).
  $$
  According to $(b)$ there exists a surjective map $r : \omega \to \omega$ such that
  $$
    |\gamma(\Map(\omega, A_1 \times \ldots \times A_n) \circ r)| \le t.
  $$
  Let $\widehat r = (r, \ldots, r) \in \hom_{\SetSurj^n}\big((\omega, \ldots, \omega), (\omega, \ldots, \omega)\big)$ and let us show
  that
  $$
    |\chi(\hom_{\SetSurj^n}\big((\omega, \ldots, \omega), (A_1, \ldots, A_n)\big) \circ \widehat r)| \le t
  $$
  by showing that
  \begin{multline*}
    \chi(\hom_{\SetSurj^n}\big((\omega, \ldots, \omega), (A_1, \ldots, A_n)\big) \circ \widehat r) \subseteq\\
      \subseteq \gamma(\Map(\omega, A_1 \times \ldots \times A_n) \circ r).
  \end{multline*}
  Take any $(f_1, \ldots, f_n) \in \hom_{\SetSurj^n}\big((\omega, \ldots, \omega), (A_1, \ldots, A_n)\big)$. Then
  \begin{multline*}
    \chi\big((f_1, \ldots, f_n) \circ \widehat r\big) = \chi(f_1 \circ r, \ldots, f_n \circ r) =\\
    = \gamma(\langle f_1 \circ r, \ldots, f_n \circ r \rangle) = \gamma(\langle f_1, \ldots, f_n\rangle  \circ r)
  \end{multline*}
  This completes the proof.
\end{proof}

\section{Relational structures over finite languages}
\label{drdrs.sec.rsfl}

Let $\Theta$ be a relational language, let $\RelQuo(\Theta)$ denote the category of $\Theta$-structures
and quotient maps, and let $\FinRelQuo(\Theta)$ denote the category of finite $\Theta$-structures
and quotient maps. Assuming that $\Theta$ is a finite relational language, in this section we show that
$\FinRelQuo(\Theta)$ has dual small Ramsey degrees, and that in $\RelQuo(\Theta)$ every finite $\Theta$-structure
has a finite dual big Ramsey degree in any projectively universal countable $\Theta$-structure.

Let $\Theta = \{R_1, R_2, \ldots, R_m\}$ be a finite relational language where $R_i$ is a relational symbol of arity $h_i \ge 1$,
$1 \le i \le m$. The presence of isolated points turns out to be a major technical issue, so our strategy will be to first
consider $\Theta$-structures with no isolated points, and then infer the general result from that.
To this end, let $\RelQuo^+(\Theta)$ denote the full subcategory of $\RelQuo(\Theta)$ spanned by finite $\Theta$-structures with no isolated points,
and let $\FinRelQuo^+(\Theta)$ denote the full subcategory of $\RelQuo^+(\Theta)$ spanned by finite structures.

Our proofs will rely on analyzing the Ramsey properties for the the following very special $\Theta$-structures.
For arbitrary sets $X_1, \ldots, X_m, Y$ let $\calE^Y_{X_1, \ldots, X_m} = (E^Y_{X_1, \ldots, X_m}, \Theta^{E^Y_{X_1, \ldots, X_m}})$
be the $\Theta$-structure with
$$
  E^Y_{X_1, \ldots, X_m} = (Y \times \{0\}) \cup \bigcup_{i=1}^m X_i \times \{1, \ldots, h_i\} \times \{i\}
$$
and for $1 \le i \le m$:
$$
  R_i^{E^Y_{X_1, \ldots, X_m}} = \{ \big((x, 1, i), \ldots, (x, h_i, i)\big) : x \in X_i \}.
$$
Intuitively, $Y$ is the set of isolated points of $\calE^Y_{X_1, \ldots, X_m}$ and
each $R_i^{E^Y_{X_1, \ldots, X_m}}$ consists of $|X_i|$ many independent $h_i$-tuples.
Let $\EE(\Theta)$ be the category whose objects are $\calE^Y_{X_1, \ldots, X_m}$ where
$X_1, \ldots, X_m, Y$ are arbitrary sets and whose morphisms are quotient maps, and let
$\EE^+(\Theta)$ denote the full subcategory of $\EE(\Theta)$ spanned by structures
$\calE^\0_{X_1, \ldots, X_m}$ with no isolated points. Moreover, let
$\FinE(\Theta)$ denote the full subcategory of $\EE(\Theta)$ spanned by finite structures,
and let $\FinE^+(\Theta)$ denote the full subcategory of $\FinE(\Theta)$ spanned by structures
$\calE^\0_{X_1, \ldots, X_m}$ with no isolated points.

\begin{LEM}\label{drdrs.lem.E-iso-Set-m}
  Let $\Theta = \{R_1, \ldots, R_m\}$ be a finite relational language.
  
  $(a)$ The categories $\EE^+(\Theta)$ and $\SetSurj^m$ are isomorphic.

  $(b)$ The categories $\FinE^+(\Theta)$ and $\FinSetSurj^m$ are isomorphic.
\end{LEM}
\begin{proof}
  $(a)$ Define $F : \SetSurj^m \to \EE^+(\Theta)$ as follows.
  The action of $F$ on objects is given by
  $
    F(X_1, \ldots, X_m) = \calE^\0_{X_1, \ldots, X_m}
  $,
  while for $f_i : X_i \to Y_i$, $1 \le i \le m$, we define
  $
    F(f_1, \ldots, f_m) : \calE^\0_{X_1, \ldots, X_m} \to \calE^\0_{Y_1, \ldots, Y_m}
  $
  by
  $$
    F(f_1, \ldots, f_m)(x, j, i) = (f_i(x), j, i).
  $$
  It is easy to see that $F$ is a well-defined functor,
  and an isomorphism $\SetSurj^m \to \EE^+(\Theta)$.

  $(b)$ analogous to $(a)$.
\end{proof}

Let us first consider dual small Ramsey degrees.
Our intention is to apply Theorem~\ref{drdrs.thm.cofinal-amalg} to show that $\FinRelQuo^+(\Theta)$
has dual small Ramsey degrees. Let us show that $\FinE^+(\Theta)$ is a convenient subcategory of $\FinRelQuo^+(\Theta)$.

\begin{LEM}\label{drdrs.lem.small-degs-1}
  Let $\Theta = \{R_1, \ldots, R_m\}$ be a finite relational language.

  $(a)$ $\FinE(\Theta)$ is dually cofinal in $\FinRelQuo(\Theta)$.

  $(b)$ $\FinE^+(\Theta)$ is dually cofinal in $\FinRelQuo^+(\Theta)$.

  $(c)$ $\FinRelQuo(\Theta)$ has projective amalgamation.

  $(d)$ $\FinRelQuo^+(\Theta)$ has projective amalgamation.
\end{LEM}
\begin{proof}
  $(a)$ To see that $\FinE(\Theta)$ is dually cofinal in $\FinRelQuo(\Theta)$, take any finite $\Theta$-structure $\calA = (A, \Theta^A)$,
  let $Y$ be the set of isolated points in $A$ and let $n_i = |R_i^A|$, $i \le i \le m$. Then it is easy to construct a quotient map
  $\calE^Y_{n_1, \ldots, n_m} \to \calA$ (here $n_i = \{0, \ldots, n_i-1\}$).

  \medskip

  $(b)$ the same as $(a)$ with $Y = \0$.

  \medskip

  $(c)$
  Take arbitrary finite
  $\Theta$-structures $\calA = (A, \Theta^A)$, $\calB = (B, \Theta^B)$ and $\calC = (C, \Theta^C)$
  together with quotient maps $f : \calA \to \calC$ and $g : \calB \to \calC$.
  If $C = \0$ then $A = B = \0$ and $\calE^\0_{\0, \ldots, \0}$ together with the empty maps amalgamates the diagram.
  Assume, therefore, that $C \ne \0$. 
  
  For each pair $(a, b) \in A \times B$ such that $f(a) = g(b)$ add a new element $y_{a, b}$ to $Y$
  and put $p(y_{a, b}) = a$ and $q(y_{a, b}) = b$.
  
  As the next step fix an $i \in \{1, \ldots, m\}$ and let $h_i$ be the arity of $R_i$. If $R^\calC_i = \0$ put $X_i = \0$.
  Otherwise, for each pair of $h_i$-tuples $(\bar a, \bar b) \in R_i^A \times R_i^B$ such that
  $\bar a = (a_1, \ldots, a_{h_i})$, $\bar b = (b_1, \ldots, b_{h_i})$ and
  $(f(a_1), \ldots, f(a_{h_i})) = (g(b_1), \ldots, g(b_{h_i})) \in R_i^C$ add a new element $x_{\bar a, \bar b}$ to $X_i$
  which adds a new tuple $\big((x_{\bar a, \bar b}, 1, i), \ldots, (x_{\bar a, \bar b}, h_i, i)\big)$ to $R_i^{E^Y_{X_1, \ldots, X_m}}$,
  and put $p((x_{\bar a, \bar b}, j, i)) = a_j$ and $q((x_{\bar a, \bar b}, j, i)) = b_j$, $1 \le j \le h_i$.
  Then it is easy to check that $p : \calE^Y_{X_1, \ldots, X_M} \to \calA$ and
  $q : \calE^Y_{X_1, \ldots, X_M} \to \calB$ are quotient maps satisfying $f \circ p = g \circ q$.

  \medskip

  $(d)$ the same as $(c)$ with $Y = \0$.
\end{proof}

\begin{LEM}\label{drdrs.lem.dual-small-rd-no-iso-pts}
  Let $\Theta = \{R_1, \ldots, R_m\}$ be a finite relational language.
  
  $(a)$ The category $\FinE^+(\Theta)$ has dual small Ramsey degrees.

  $(b)$ The category $\FinRelQuo^+(\Theta)$ has dual small Ramsey degrees.
\end{LEM}
\begin{proof}
  $(a)$ The categories $\FinSetSurj^m$ and $\FinE^+(\Theta)$ are isomorphic by Lemma~\ref{drdrs.lem.E-iso-Set-m}, so
  the statement follows from Proposition~\ref{drdrs.prop.t-SetFin-n}.

  $(b)$ Follows immediately from the dual of Theorem~\ref{drdrs.thm.cofinal-amalg}, Lemma~\ref{drdrs.lem.small-degs-1} and~$(a)$.
\end{proof}

\begin{THM}
  Let $\Theta = \{R_1, \ldots, R_m\}$ be a finite relational language.
  The class of all finite $\Theta$-structures has dual small Ramsey degrees with respect to quotient maps.
\end{THM}
\begin{proof}
  The major technical issue with showing that $\FinRelQuo(\Theta)$ has dual small Ramsey degrees is the presence of
  isolated points. We shall bypass the issue by reducing the problem to $\FinRelQuo^+(\{R_0\} \cup \Theta)$, where
  $R_0 \notin \Theta$ is a new binary relational symbol.
  
  Just as a notational convenience, let
  $\Theta_0 = \{R_0\} \cup \Theta$, let $\CC = \FinRelQuo^+(\Theta_0)$ and $\BB = \FinRelQuo(\Theta)$. We know that $\CC$ has dual small Ramsey
  degrees (Lemma~\ref{drdrs.lem.dual-small-rd-no-iso-pts}), so in order to show that $\BB$ has dual small Ramsey degrees it
  suffices to establish a pre-adjunction $F : \Ob(\BB^\op) \rightleftarrows \Ob(\CC^\op) : G$.

  Take any $\calA = (A, \Theta^A) \in \Ob(\BB)$. Intuitively, to construct $F(\calA)$ we shall double $\calA$
  and then join each element with its double by a tuple in $R_0$. Formally, we let $F(\calA) = \calA' = (A', \Theta_0^{A'})$
  where:
  \begin{align*}
    A' &= \{0, 1\} \times A,\\
    R_0^{A'} &= \big\{((0,a), (1,a)) : a \in A\big\},
  \end{align*}
  and for $1 \le i \le m$:
  $$
  R_i^{A'} = \big\{\big((i,a_1), (i,a_2), \ldots, (i,a_{h_i})\big) : i \in \{0,1\},\; (a_1, \ldots, a_{h_i}) \in R_i^A \big\},
  $$
  where $h_i$ is the arity of $R_i$. Note that if $\calA$ had isolated points, the construction ensures that there are
  no isolated points in $F(\calA)$. Therefore, $F(\calA) \in \Ob(\CC)$.
  
  The construction of $G$ is much simpler. For a $\calC = (C, \Theta_0^C) \in \Ob(\CC)$ we will let $G(\calC)$ simply forget $R_0$.
  Formally,
  $$
    G\big((C, R_0^C, R_1^C, \ldots, R_m^C)\big) = (C, R_1^C, \ldots, R_m^C).
  $$
  Finally, let us construct the family
  $$
    \Phi_{\calA, \calC} : \hom_{\CC^\op}(F(\calA), \calC) \to \hom_{\BB^\op}(\calA, G(\calC)).
  $$
  Take any $\calA = (A, \Theta^A) \in \Ob(\BB)$, $\calC = (C, \Theta_0^C) \in \Ob(\CC)$ and a quotient map
  $u : \calC \to F(\calA)$. For a set $A$ define $\phi_A : \{0, 1\} \times A \to A$ by $\phi_A\big((i, a)\big) = a$,
  $i \in \{0,1\}$, $a \in A$, and then let $\Phi_{\calA, \calC}(u) = \phi_A \circ u$. It is easy to check that
  the definition of $\Phi$ is correct: $\Phi_{\calA, \calC}(u)$ is indeed a quotient map $G(\calC) \to \calA$.

  To complete the proof we still have to show that (PA) holds. Take any $\calC = (C, \Theta_0^C) \in \Ob(\CC)$,
  $\calB = (B, \Theta^B), \calD = (D, \Theta^D) \in \Ob(\BB)$ and let
  $f : \calD \to \calB$ be a quotient map. Define $f' : F(\calD) \to F(\calB)$ by
  $f'\big((i, d)\big) = (i, f(d))$, $i \in \{0,1\}$, $d \in D$. Clearly, $f'$ is a quotient map $F(\calD) \to F(\calB)$.
  Let $u : \calC \to F(\calD)$ be a quotient map and let us show that $f \circ \Phi_{\calD, \calC}(u) = \Phi_{\calB, \calC}(f' \circ u)$.
  Take any $c \in C$ and let $u(c) = (i, d)$. Then
  $$
    f \circ \Phi_{\calD, \calC}(u)(c) = f \circ \phi_D \circ u(c) = f \circ \phi_D (i, d) = f(d)
  $$
  while
  $$
  \Phi_{\calB, \calC}(f' \circ u)(c) = \phi_B \circ f' \circ u(c) = \phi_B \circ f'(i, d) = \phi_B(i, f(d)) = f(d).
  $$
  This concludes the proof that $F : \Ob(\BB^\op) \rightleftarrows \Ob(\CC^\op) : G$ is a pre-adjunction and the proof of the theorem.
\end{proof}

Before presenting the results pertaining to dual big Ramsey degrees we have to discuss a finesse.
Let $\Theta = \{R_1, \ldots, R_m\}$ be a finite relational language and let $\calA = (A, \Theta^A)$ and
$\calB = (B, \Theta^B)$ be $\Theta$-structures such that there exists a quotient map $\calA \to \calB$.
Then for every $i \in \{1, \ldots, m\}$, $R_i^A = \0$ iff $R_i^B = \0$. Consequently, no $\Theta$-structure
is projectively universal for $\FinRelQuo(\Theta)$, and the fact we have just discussed is the only reason.

If we picture $\RelQuo(\Theta)$ as an  oriented graph, it is easy to see that it splits into connected components
according to the choice of indices $i \in \{1, \ldots, m\}$ for which $R_i = \0$. More formally,
for a $\Theta$-structure $\calA = (A, \Theta^A)$ we say that the \emph{kind} of $\calA$ is a 01-sequence
$\bar b = (b_1, \ldots, b_m) \in \{0,1\}^m$ such that $b_i = 0$ iff $R_i^A = \0$.
For $\bar b = (b_1, \ldots, b_m) \in \{0,1\}^m$ let $\RelQuo(\Theta, \bar b)$ denote the full subcategory of
$\RelQuo(\Theta)$ spanned by all the $\Theta$-structures of kind $\bar b$. Then $\RelQuo(\Theta)$ is a disjoint union
of its subcategories $\RelQuo(\Theta, \bar b)$, $\bar b \in \{0,1\}^m$.

For every $\bar b \ne (0, 0, \ldots, 0)$ there is a $\Theta' \subseteq \Theta$ such that
$\RelQuo(\Theta, \bar b)$ is isomorphic to $\RelQuo\big(\Theta', (1, 1, \ldots, 1)\big)$: just select those
$R_i \in \Theta$ such that $b_i = 1$. This means that in order to understand the big Ramsey combinatorics of categories
$\RelQuo(\Theta)$ for various finite relational languages $\Theta$ it suffices to understand the big Ramsey combinatorics
of $\RelQuo^\full(\Theta) = \RelQuo\big(\Theta, (1, 1, \ldots, 1)\big)$.
Objects of $\RelQuo^\full(\Theta)$ will be referred to as \emph{full} $\Theta$-structures.

\begin{LEM}\label{drdrs.lem.big-drd-aux-1}
  Let $\Theta = \{R_1, \ldots, R_m\}$ be a finite relational language and
  let $X_1$, \ldots, $X_m$, $Y$ be finite sets such that $X_1$, \ldots, $X_m$ are nonempty. Then
  $$
    T^\partial_{\RelQuo(\Theta)}(\calE^Y_{X_1, \ldots, X_m}, \calE^\omega_{\omega, \ldots, \omega}) < \infty.
  $$
\end{LEM}
\begin{proof}
    Since $\EE(\Theta)$ is a full subcategory of $\RelQuo(\Theta)$ it suffices to show that
    $
      T^\partial_{\EE(\Theta)}(\calE^Y_{X_1, \ldots, X_m}, \calE^\omega_{\omega, \ldots, \omega}) < \infty
    $.

    For a quotient map $q : \calE^\omega_{\omega, \ldots, \omega} \to \calE^Y_{X_1, \ldots, X_m}$ the following set
    $$
      \tp(q) = \{q(x) : x \text{ is an isolated point of } \calE^\omega_{\omega, \ldots, \omega}\}
    $$
    will be referred as the \emph{type of $q$}. Clearly, $Y \times \{0\} \subseteq \tp(q) \subseteq E^Y_{X_1, \ldots, X_m}$.
    For a set $\tau$ satisfying $Y \times \{0\} \subseteq \tau \subseteq E^Y_{X_1, \ldots, X_m}$ let
    $$
      \QMap_\tau(\calE^\omega_{\omega, \ldots, \omega}, \calE^Y_{X_1, \ldots, X_m}) = 
        \{q \in \QMap(\calE^\omega_{\omega, \ldots, \omega}, \calE^Y_{X_1, \ldots, X_m}) : \tp(q) = \tau\}.
    $$

    \bigskip

    Claim. For every $\tau$ satisfying $Y \times \{0\} \subseteq \tau \subseteq E^Y_{X_1, \ldots, X_m}$ 
    there is a positive integer $t$ such that for every Borel coloring
    $
      \chi : \QMap_{\tau}(\calE^\omega_{\omega, \ldots, \omega}, \calE^Y_{X_1, \ldots, X_m}) \to k
    $
    there is a quotient map $w : \calE^\omega_{\omega, \ldots, \omega} \to \calE^\omega_{\omega, \ldots, \omega}$
    satisfying the following:
    \begin{itemize}
      \item $|\chi\big(\QMap_{\tau}(\calE^\omega_{\omega, \ldots, \omega}, \calE^Y_{X_1, \ldots, X_m}) \circ w\big)| \le t,$ and
      \item $w(x)$ is an isolated point if and only if $x$ is an isolated point. 
    \end{itemize}

    Proof. Let $t = T^\partial_{\SetSurj^{m+1}}\big((\tau, X_1, \ldots, X_m), (\omega, \omega, \ldots, \omega)\big)$, which is an
    integer by Proposition~\ref{drdrs.prop.T-Set-n}~$(c)$.
    
    Every quotient map $q \in \QMap_{\tau}(\calE^\omega_{\omega, \ldots, \omega}, \calE^Y_{X_1, \ldots, X_m})$ determines uniquely
    a tuple of surjective set-functions $(q_0, q_1, \ldots, q_m)$ where $q_0 : \omega \to \tau$ captures the behavior of $q$ on isolated points,
    while $q_i : \omega \to X_i$ captures the behavior of $q$ on the tuples related by $R_i$, $1 \le i \le m$. More precisely,
    $$
      q_0(s) = y \text{ if and only if } q\big((s, 0)\big) = y,
    $$
    while for $1 \le i \le m$:
    $$
      q_i(s) = x \text{ if and only if } q\big((s, j, i)\big) = (x, j, i),
    $$
    for all $j \in \{1, 2, \ldots, h_i\}$. We have thus established a \emph{bijective} map:
    $$
      \Phi : \QMap_{\tau}(\calE^\omega_{\omega, \ldots, \omega}, \calE^Y_{X_1, \ldots, X_m}) \to
      \Surj(\omega, \tau) \times \Surj(\omega, X_1) \times \ldots \times \Surj(\omega, X_m).
    $$
    It is easy to see that $\Phi$ is continuous.
    
    Let
    $
      \chi : \QMap_{\tau}(\calE^\omega_{\omega, \ldots, \omega}, \calE^Y_{X_1, \ldots, X_m}) \to k
    $
    be a finite Borel coloring. Define a finite coloring
    $$
      \gamma : \Surj(\omega, \tau) \times \Surj(\omega, X_1) \times \ldots \times \Surj(\omega, X_m) \to k
    $$
    by $\gamma(\Phi(f)) = \chi(f)$. This is a Borel coloring because $\chi$ is a Borel coloring and $\Phi$ is bijective and continuous.
    By Proposition~\ref{drdrs.prop.T-Set-n}~$(c)$ and the choice of $t$
    there exist surjective maps $w_0, w_1, \ldots, w_m : \omega \to \omega$ such that:
    $$
      \Big|\gamma\Big(\big(\Surj(\omega, \tau) \times \Surj(\omega, X_1) \times \ldots \times \Surj(\omega, X_m)\big)
      \circ (w_0, w_1, \ldots, w_m)\Big)\Big| \le t.
    $$
    Finally, define $w : \calE^\omega_{\omega, \ldots, \omega} \to \calE^\omega_{\omega, \ldots, \omega}$ as follows:
    $$
      w\big((s, 0)\big) = \big(w_0(s), 0\big)
    $$
    while for $1 \le i \le m$:
    $$
      w\big((s, j, i)\big) = \big(w_i(s), j, i\big),
    $$
    for all $j \in \{1, 2, \ldots, h_i\}$. It follows from the construction that $w$ is a quotient map such that
    $w(x)$ is an isolated point iff $x$ is an isolated point, while
    $$
      \big|\chi\big(\QMap_{\tau}(\calE^\omega_{\omega, \ldots, \omega}, \calE^Y_{X_1, \ldots, X_m}) \circ w\big)\big| \le t
    $$
    follows from the fact that
    $$
      \Phi(f \circ w) = \Phi(f) \circ (w_0, w_1, \ldots, w_m).
    $$
    This concludes the proof of the claim.

    \bigskip

    Going back to the proof of the lemma let $\tau_1, \ldots, \tau_s$ be an enumeration of all the sets $\tau$ satisfying
    $Y \times \{0\} \subseteq \tau \subseteq E^Y_{X_1, \ldots, X_m}$. Then, clearly,
    $$
      \QMap(\calE^\omega_{\omega, \ldots, \omega}, \calE^Y_{X_1, \ldots, X_m}) = \bigcup\nolimits_{i=1}^s
        \QMap_{\tau_i}(\calE^\omega_{\omega, \ldots, \omega}, \calE^Y_{X_1, \ldots, X_m}),
    $$
    and this is a disjoint union.
    Take any finite Borel coloring
    $$
      \chi : \QMap(\calE^\omega_{\omega, \ldots, \omega}, \calE^Y_{X_1, \ldots, X_m}) \to k.
    $$
    It is easy to see that all the restrictions
    $$
      \chi_i : \QMap_{\tau_i}(\calE^\omega_{\omega, \ldots, \omega}, \calE^Y_{X_1, \ldots, X_m}) \to k : f \mapsto \chi(f)
    $$
    of $\chi$ are also Borel colorings, $1 \le i \le s$. 
    Let us inductively construct a sequence $q_0, q_1, q_2, \ldots, q_s$ of
    quotient maps $\calE^\omega_{\omega, \ldots, \omega} \to \calE^\omega_{\omega, \ldots, \omega}$ as follows.
  
    To start the induction take $q_0$ to be the identity.
    For the induction step assume that $q_0, q_1, \ldots, q_{j-1}$ have been constructed
    and consider the Borel coloring $\gamma_j : \QMap_{\tau_j}(\calE^\omega_{\omega, \ldots, \omega}, \calE^Y_{X_1, \ldots, X_m}) \to k$ given by
    $$
      \gamma_j(f) = \chi_j(f \circ q_{j-1} \circ \ldots \circ q_1 \circ q_0).
    $$
    By the Claim, there is a positive integer $t_j$ and a
    quotient map $q_j : \calE^\omega_{\omega, \ldots, \omega} \to \calE^\omega_{\omega, \ldots, \omega}$ such that
    $$
      |\gamma_j(\QMap_{\tau_j}(\calE^\omega_{\omega, \ldots, \omega}, \calE^Y_{X_1, \ldots, X_m}) \circ q_j)| \le t_j.
    $$
    Note that, by construction, $q_j(x)$ is an isolated point iff $x$ is an isolated point. This property if $q_j$
    ensures that
    $$
      \QMap_{\tau_j}(\calE^\omega_{\omega, \ldots, \omega}, \calE^Y_{X_1, \ldots, X_m}) \circ q_j \subseteq
      \QMap_{\tau_j}(\calE^\omega_{\omega, \ldots, \omega}, \calE^Y_{X_1, \ldots, X_m}).
    $$

    Let $r = q_m \circ q_{m-1} \circ \ldots \circ q_1 \circ q_0$. Then $r$ is clearly a quotient map and
    $$
      \chi(\QMap(\calE^\omega_{\omega, \ldots, \omega}, \calE^Y_{X_1, \ldots, X_m}) \circ r) 
      = \bigcup\nolimits_{j=1}^s \chi_j(\QMap_{\tau_j}(\calE^\omega_{\omega, \ldots, \omega}, \calE^Y_{X_1, \ldots, X_m}) \circ r).
    $$
    By the construction of $q_j$'s it is now easy to see that for all $1 \le j \le s$:
    \begin{multline*}
      \QMap_{\tau_j}(\calE^\omega_{\omega, \ldots, \omega}, \calE^Y_{X_1, \ldots, X_m}) \circ r \subseteq\\
      \subseteq \QMap_{\tau_j}(\calE^\omega_{\omega, \ldots, \omega}, \calE^Y_{X_1, \ldots, X_m}) \circ q_j \circ q_{j-1} \circ \ldots \circ q_1 \circ q_0,
    \end{multline*}
    whence
    \begin{multline*}
      \bigcup\nolimits_{j=1}^s \chi_j(\QMap_{\tau_j}(\calE^\omega_{\omega, \ldots, \omega}, \calE^Y_{X_1, \ldots, X_m}) \circ r) \subseteq\\
      \subseteq \bigcup\nolimits_{j=1}^s \chi_j(\QMap_{\tau_j}(\calE^\omega_{\omega, \ldots, \omega}, \calE^Y_{X_1, \ldots, X_m}) \circ q_j \circ q_{j-1} \circ \ldots \circ q_1 \circ q_0) =\\
      = \bigcup\nolimits_{j=1}^s \gamma_j(\QMap_{\tau_j}(\calE^\omega_{\omega, \ldots, \omega}, \calE^Y_{X_1, \ldots, X_m}) \circ q_j).
    \end{multline*}
    Therefore,
    \begin{multline*}
      |\chi(\QMap(\calE^\omega_{\omega, \ldots, \omega}, \calE^Y_{X_1, \ldots, X_m}) \circ r)| \le\\
      \le \sum\nolimits_{j=1}^s |\gamma_j(\QMap_{\tau_j}(\calE^\omega_{\omega, \ldots, \omega}, \calE^Y_{X_1, \ldots, X_m}) \circ q_j)| \le\\
      \le \sum\nolimits_{j=1}^s t_j < \infty,
    \end{multline*}
    which concludes the proof.
\end{proof}

\begin{LEM}\label{drdrs.lem.big-drd-aux-2}
  Let $\Theta = \{R_1, \ldots, R_m\}$ be a finite relational language and let $\calA$ be a finite full
  $\Theta$-structure. Then there exist nonempty finite sets $X_1$, \ldots, $X_m$ and a possibly empty finite set $Y$
  such that there is a quotient map $\calE^Y_{X_1, \ldots, X_m} \to \calA$ and
  $\calE^\omega_{\omega, \ldots, \omega}$ is dually weakly homogeneous for $(\calA, \calE^Y_{X_1, \ldots, X_m})$
  in $\RelQuo(\Theta)$.
\end{LEM}
\begin{proof}
  Since $\FinE(\Theta)$ is dually cofinal in $\FinRelQuo(\Theta)$ (Lemma~\ref{drdrs.lem.small-degs-1})
  there exist finite sets $X_1$, \ldots, $X_m$, $Y$ such that there is a quotient map $\calE^Y_{X_1, \ldots, X_m} \to \calA$.
  Since $\calA$ is full, the existence of a quotient map $\calE^Y_{X_1, \ldots, X_m} \to \calA$ ensures that
  $X_1$, \ldots, $X_m$ are nonempty.

  To show that $\calE^\omega_{\omega, \ldots, \omega}$ is dually weakly homogeneous for $(\calA, \calE^Y_{X_1, \ldots, X_m})$
  in $\RelQuo(\Theta)$, fix an arbitrary quotient map $f : \calE^Y_{X_1, \ldots, X_m} \to \calA$ and take any
  quotient map $g : \calE^\omega_{\omega, \ldots, \omega} \to \calE^\omega_{\omega, \ldots, \omega}$ satisfying the following:
  \begin{itemize}
    \item for every $b \in E^\omega_{\omega, \ldots, \omega}$, isolated or not, there are infinitely many isolated points $a \in E^\omega_{\omega, \ldots, \omega}$
          with $g(a) = b$, and
    \item for every $i \in \{1, \ldots, m\}$ and every tuple $\bar b \in R_i^{E^\omega_{\omega, \ldots, \omega}}$
          there are infinitely many tuples $\bar a \in R_i^{E^\omega_{\omega, \ldots, \omega}}$ with $g(\bar a) = \bar b$
          (to keep the notation simple, for $\bar a = (a_1, \ldots, a_{h_i})$ we write $g(\bar a)$ to denote
          $(g(a_1), \ldots, g(a_{h_i}))$).
  \end{itemize}
  Let us show that
  $
    \QMap(\calE^\omega_{\omega, \ldots, \omega}, \calA) \circ g \subseteq f \circ \QMap(\calE^\omega_{\omega, \ldots, \omega}, \calE^Y_{X_1, \ldots, X_m})
  $.
  
  Take any $q \in \QMap(\calE^\omega_{\omega, \ldots, \omega}, \calA)$ and define
  $p \in \QMap(\calE^\omega_{\omega, \ldots, \omega}, \calE^Y_{X_1, \ldots, X_m})$ as follows.
  \begin{center}
    \begin{tikzcd}[column sep=large]
      \calE^Y_{X_1, \ldots, X_m} \arrow[<-,r,dashed,"p"] & \calE^\omega_{\omega, \ldots, \omega} \\
      \calA \arrow[<-,u, "f"] \arrow[<-,r, "q"'] & \calE^\omega_{\omega, \ldots, \omega} \arrow[<-,u, "g"']
    \end{tikzcd}
  \end{center}
  For an $i \in \{1, \ldots, m\}$ and a tuple $\bar a \in R_i^A$ let
  \begin{align*}
    S_{\bar a} &= \big\{\bar b \in R_i^{E^\omega_{\omega, \ldots, \omega}} : q(g(\bar b)) = \bar a \big\},\\
    T_{\bar a} &= \big\{\bar c \in R_i^{E^Y_{X_1, \ldots, X_m}} : f(\bar c) = \bar a \big\}.
  \end{align*}
  By the choice of $g$ we know that $S_{\bar a}$ is a countably infinite set, while
  $T_{\bar a}$ is a nonempty finite set. Define $p$ so that
  $$
    \{p(\bar b) : \bar b \in S_{\bar a} \} = T_{\bar a},
  $$
  and repeat this procedure for every $i \in \{1, \ldots, m\}$ and every $\bar a \in R_i^A$.

  Finally, let us handle isolated points. Take any $a \in A$, and let
  \begin{align*}
    S_a &= \{x \in E^\omega_{\omega, \ldots, \omega} : q(g(x)) = a \text{ and $x$ is isolated}\},\\
    T_a &= \{x \in E^Y_{X_1, \ldots, X_m} : f(x) = a \text{ and $x$ is isolated}\}.
  \end{align*}
  By the choice of $g$ we know that $S_{a}$ is a countably infinite set, while $T_a$ is finite.
  If $T_a \ne \0$, define $p$ so that $\{p(b) : b \in S_a\} = T_a$. If, however, $T_a = \0$
  take any $c \in E^Y_{X_1, \ldots, X_m}$ such that $f(c) = a$ and define $p$ so that
  $\{p(b) : b \in S_a\} = \{c\}$.

  The simple structure of objects in $\EE(\Theta)$ ensures that $p$ is a well-defined quotient map, while the construction of $p$
  ensures that $q \circ g = f \circ p$.
\end{proof}

\begin{LEM}\label{drdrs.lem.almost-final}
  Let $\Theta = \{R_1, \ldots, R_m\}$ be a finite relational language, and let $\calA$ be a nonempty finite $\Theta$-structure.
  If $\calA$ is of kind $(b_1, \ldots, b_m) \in \{0,1\}^m$ then
  $$
    T^\partial_{\RelQuo(\Theta)}(\calA, \calE^\omega_{b_1 \cdot \omega, \ldots, b_m \cdot \omega}) < \infty,
  $$
  where $1 \cdot \omega = \omega$ and $0 \cdot \omega = \0$.
\end{LEM}
\begin{proof}
  If $\calA$ is of the kind $(0, \ldots, 0)$ then $\calA$ is nothing but a set of isolated points, so
  $
    T^\partial_{\RelQuo(\Theta)}(\calA, \calE^\omega_{\0, \ldots, \0}) < \infty,
  $
  by the Infinite Dual Ramsey Theorem (Theorem~\ref{rpemklei.thm.IDRT}).

  If, on the other hand, $\calA$ is a full structure then
  $
    T^\partial_{\RelQuo(\Theta)}(\calA, \calE^\omega_{\omega, \ldots, \omega}) < \infty
  $
  follows from the dual of Theorem~\ref{rdbas.thm.mon-bigdeg} and
  Lemmas~\ref{drdrs.lem.big-drd-aux-1} and~\ref{drdrs.lem.big-drd-aux-2}.
  
  Finally, if $\calA$ is of kind $(b_1, \ldots, b_m) \ne (0,\ldots,0)$ then
  $$
    T^\partial_{\RelQuo(\Theta)}(\calA, \calE^\omega_{b_1 \cdot \omega, \ldots, b_m \cdot \omega}) < \infty
  $$
  follows from the above conclusion and the observation that
  for every kind $\bar b \ne (0, 0, \ldots, 0)$ there is a $\Theta' \subseteq \Theta$ such that
  $\RelQuo(\Theta, \bar b)$ is isomorphic to $\RelQuo^\full(\Theta')$.
\end{proof}

\begin{LEM}\label{drdrs.lem.bi-proj-2}
  Let $\Theta = \{R_1, \ldots, R_m\}$ be a finite relational language.
  Consider the category of all full $\Theta$ structures together with quotient maps,
  and let $\KK$ be the class of all finite full $\Theta$ structures
  such that $\calE^Y_{X_1, \ldots, X_m} \in \KK$ for all finite sets $X_1, \ldots, X_m, Y$ with
  $X_i \ne \0$, $1 \le i \le m$. Then a countable $\Theta$-structure $\calU$ is projectively universal for $\KK$
  if and only if $\calU$ is bi-projectible with $\calE^\omega_{\omega, \ldots, \omega}$.
\end{LEM}
\begin{proof}
  $(\Leftarrow)$ If $\calU$ is bi-projectible with $\calE^\omega_{\omega, \ldots, \omega}$
  then there is a quotient map $\calU \to \calE^\omega_{\omega, \ldots, \omega}$,
  and the statement now follows from the fact that $\calE^\omega_{\omega, \ldots, \omega}$ is
  projectively universal for~$\KK$.

  $(\Rightarrow)$ Assume that $\calU = (U, \Theta^U)$ is a countable $\Theta$-structure
  projectively universal for~$\KK$. The universality of $\calU$ implies that for every
  $n \in \NN$ there is a quotient map $q_n : \calU \to \calE^n_{n, \ldots, n}$.
  Then it is easy to see that there is a quotient map $\calU \to \calE^\omega_{\omega, \ldots, \omega}$.
  To complete the proof note that one can easily construct a quotient map $\calE^\omega_{\omega, \ldots, \omega} \to \calU$
  because $\calU$ is countable. Therefore, $\calU$ and $\calE^\omega_{\omega, \ldots, \omega}$ are bi-projectible.
\end{proof}

\begin{THM}\label{drdrs.thm.bigrd-general}
  Let $\Theta = \{R_1, \ldots, R_m\}$ be a finite relational language, let $\bar b \in \{0,1\}^m$ be arbitrary,
  let $\KK_{\bar b}$ be the class of all finite $\Theta$-structures of kind $\bar b$
  and let $\calU$ be a countable $\Theta$-structure projectively universal for $\KK_{\bar b}$. Then for every $\calA \in \KK_{\bar b}$:
  $$
    T^\partial_{\RelQuo(\Theta)}(\calA, \calU) < \infty.
  $$
\end{THM}
\begin{proof}
  This is an immediate consequence of Lemmas~\ref{drdrs.lem.bi-proj-general}, \ref{drdrs.lem.almost-final} and~\ref{drdrs.lem.bi-proj-2}
  using the fact that for every kind $\bar b \ne (0, 0, \ldots, 0)$ there is a $\Theta' \subseteq \Theta$ such that
  $\RelQuo(\Theta, \bar b)$ is isomorphic to $\RelQuo^\full(\Theta')$.
\end{proof}

\section{Reflexive binary structures}
\label{drdrs.sec.bin-rel-structs}

In this section we specialize the results of Section~\ref{drdrs.sec.rsfl} to some well-known classes of reflexive binary structures.
Let $\Theta = \{R\}$ be a relational language with a single binary letter.
A \emph{reflexive binary structure} is a $\Theta$-structure $\calA = (A, R^A)$ where $A$ is nonempty and
$R^A \subseteq A^2$ is a reflexive binary relation. Typical examples are \emph{reflexive graphs} (where $R^A$ is reflexive and
symmetric) and \emph{partially ordered sets} (where $R^A$ is reflexive, antisymmetric and transitive).
In addition to these two subclasses we shall also consider \emph{reflexive oriented graphs} (where $R^A$ is reflexive and antisymmetric),
\emph{reflexive transitive structures} (where $R^A$ is reflexive and transitive), and
\emph{reflexive acyclic structures} (where $R^A$ is reflexive and contains no cycles; a \emph{cycle} in $(A, R^A)$
is a sequence of points $x_1, \ldots, x_n \in A$, $n \ge 2$, such that $(x_1, x_2) \in R^A$, \ldots, $(x_{n-1}, x_n) \in R^A$
and $(x_n, x_1) \in R^A$). Note that a reflexive binary structure has no isolated points.

Let $\calA = (A, R^A)$ be a finite binary reflexive structure. Edges of the form $(a, a) \in R^A$ will be referred to
as \emph{loops}, while edges of the form $(a, b) \in R^A$ where $a \ne b$ will be referred to as \emph{non-loops}.
By $\widehat \calA = (\widehat A, R^{\widehat A})$ we denote the \emph{reflexive closure}
of $\calA$ where $\widehat A = A$ and $R^{\widehat A} = R^A \cup \{(a,a) : a \in A\}$.
By $\widetilde \calA = (\widetilde A, R^{\widetilde A})$ we denote the \emph{reflexive symmetric closure}
of $\calA$ where $\widetilde A = A$ and $R^{\widetilde A} = R^A \cup (R^A)^{-1} \cup \{(a,a) : a \in A\}$.

\begin{THM}\label{drdrs.thm.non-graphs}
  The following classes of finite structures have dual small Ramsey degrees with respect to quotient maps:

  $(a)$ the class $\KK_1$ of all finite reflexive binary structures;

  $(b)$ the class $\KK_2$ of finite reflexive oriented graphs;

  $(c)$ the class $\KK_3$ of finite reflexive acyclic structures;

  $(d)$ the class $\KK_4$ of finite reflexive transitive digraphs;

  $(e)$ the class $\KK_5$ of finite partial orders;

  $(f)$ the class $\KK_6$ of finite reflexive graphs.
\end{THM}
\begin{proof}
  $(a)$--$(e)$. Let $\Theta = \{R\}$ be a relational language with a single binary relational letter.
  By Lemma~\ref{drdrs.lem.dual-small-rd-no-iso-pts}~$(b)$ the category $\DD = \FinRelQuo^+(\Theta)$ has dual small Ramsey degrees.
  Moreover, by Lemma~\ref{drdrs.lem.small-degs-1}, for every $\calA$ there is a positive integer $t$ such that
  for every $\calB \in \Ob(\DD)$ and number of colors $k$ there is a finite set $X$ such that
  $\calE^\0_X \longrightarrow (\calB)^\calA_{k,t}$ in $\DD^\op$.
  
  Let $\CC$ be any of the categories listed in $(a)$--$(e)$ with quotient maps as morphisms.
  Clearly, $\CC$ is a subcategory of $\DD$, all the homsets in $\DD$ are finite and all the morphisms are epi.
  Let $G : \CC \to \DD$ be the embedding $G(C) = C$ and $G(f) = f$.
  In order to apply the dual of Proposition~\ref{crt.prop.D-solves-Delta-diag} we still have to
  show that $\CC$ is closed for finite dual amalgamation problems.

  Take any $\calA, \calB \in \Ob(\CC)$. Let $t = t^\partial_\DD(\calA)$ and let $k$ be an arbitrary positive integer.
  As we have seen above, there is a finite set $X$ such that $\calE^\0_X \longrightarrow (\calB)^\calA_{k,t}$ in $\DD^\op$.
  Let $F : \Delta^\op_G(\calA, \calB; \calE^\0_X) \to \CC$ be the diagram in $\CC$ constructed as in Proposition~\ref{crt.prop.D-solves-Delta-diag}
  so that $\calE^\0_X$ together with morphisms $e_i$, $i \in I$, solves the dual amalgamation problem.
  Let $\widehat{\calE^\0_X}$ be the reflexive closure of $\calE^\0_X$.
  It is easy to check that if $e_i : \calE^\0_X \to \calB$ is a quotient map, then $e_i$ is also a quotient map
  $\widehat{\calE^\0_X} \to \calB$ because $\calB$ is reflexive. Therefore, $\widehat{\calE^\0_X}$ is an object in $\CC$ which,
  together with morphisms $e_i$ $(i \in I)$, solves the same amalgamation problem in $\CC$.
  This shows that the dual of Proposition~\ref{crt.prop.D-solves-Delta-diag} applies whence
  $t^\partial_\CC(\calA) \le t = t^\partial_{\DD}(\calA)$.
  To conclude the proof note that $\widehat{\calE^\0_X}$ belongs to each of the classes $(a)$--$(e)$.

  \medskip

  $(f)$
  We know from $(a)$ that $\KK_1$ considered as a category of all finite reflexive binary structures
  and quotient maps has dual small Ramsey degrees. Clearly, $\KK_6$ is a subcategory of $\KK_1$, all the homsets in $\KK_1$ are
  finite and all the morphisms are epi. Therefore, in order to apply the dual of Proposition~\ref{crt.prop.D-solves-Delta-diag} we still have to
  show that $\KK_6$ is closed for finite dual amalgamation problems.

  Let us, first, note that if $\calA$ is a reflexive binary structure, $\calG$ is a reflexive graph and
  $f : \calA \to \calG$ is a quotient map, then the same $f$ is a quotient map $\widetilde \calA \to \calG$.
  Assume, now, that a dual finite amalgamation problem $F : \Delta \to \KK_6$ has a solution in $\KK_1$, and let
  $\calA \in \Ob(\KK_1)$ together with morphisms $f_1$, \ldots, $f_k$ be the compatible cone for the diagram $F$.
  Then $\widetilde \calA \in \Ob(\KK_6)$ together with the same morphisms $f_1$, \ldots, $f_k$ is
  a compatible cone in $\KK_6$ for $F$ showing, thus, that $\KK_6$ is closed for dual finite amalgamation problems.
  This shows that the dual of Proposition~\ref{crt.prop.D-solves-Delta-diag} applies whence
  $t^\partial_{\KK_6}(\calG) \le t^\partial_{\KK_1}(\calG) < \infty$ for all $\calG \in \Ob(\KK_6)$.
\end{proof}

Moving on to the dual big Ramsey degrees, let us first find a convenient projectively universal countable structure.

\begin{LEM}\label{drdrs.lem.bi-proj}
  $(a)$ Consider the category of all reflexive binary structures together with quotient maps and let $\KK_i$, $1 \le i \le 5$,
  be one of the classes listed in Theorem~\ref{drdrs.thm.non-graphs}.
  A countably infinite reflexive structure $\calU$ is projectively universal for $\KK_i$
  if and only if it is bi-projectible with $\widehat{\calE^\0_\omega}$
  if and only if it is bi-projectible with $\widehat{\calE^\omega_\omega}$.

  $(b)$ Consider the category of all reflexive graphs together with quotient maps and let $\KK_6$
  be the class of finite reflexive graphs as in Theorem~\ref{drdrs.thm.non-graphs}.
  A countably infinite reflexive graph $\calU$ is projectively universal for $\KK_6$
  if and only if it is bi-projectible with $\widetilde{\calE^\0_\omega}$
  if and only if it is bi-projectible with $\widetilde{\calE^\omega_\omega}$.
\end{LEM}
\begin{proof}
  Analogous to the proof of Lemma~\ref{drdrs.lem.bi-proj-2}. 
\end{proof}

\begin{LEM}\label{drdrs.lem.big-drd-aux-22}
  $(a)$ $T^\partial(\calA, \widehat{\calE^\omega_\omega}) < \infty$ for every finite reflexive binary structure~$\calA$.

  $(b)$ $T^\partial(\calG, \widetilde{\calE^\omega_\omega}) < \infty$ for every finite reflexive graph~$\calG$.
\end{LEM}
\begin{proof}
  As a notational convenience let $\CC^* = \RelQuo(\Theta)$ where $\Theta = \{R\}$ is a relational language with
  a single binary letter. 

  $(a)$ Let $\CC$ be the category of all reflexive binary relational structures and quotient maps, and
  let $U : \CC^* \to \CC$ be the functor defined by $U(\calA) = \widehat \calA$ on objects
  and $U(f) = f$ on morphisms. Note that $U$ is well-defined on morphisms: if $f : \calA \to \calB$ is a quotient map
  then the same $f$ is also a quotient map $\widehat\calA \to \widehat\calB$ between their reflexive closures.
  Let us show that $U : \CC^* \to \CC$ has dual restrictions. Take any $\calA = (A, R^A) \in \Ob(\CC^*)$,
  $\calB = (B, R^B) \in \Ob(\CC)$ and a quotient map $q : \widehat \calA \to \calB$. Let
  $\calB' = (B', R^{B'})$ where $B' = B$ and
  $
    R^{B'} = \{(q(x), q(y)) : (x, y) \in R^A \}
  $.
  Then $q : \calA \to \calB'$ is a quotient map and $\widehat{\calB'} = \calB$.
  \begin{center}
    \begin{tikzcd}
        \calB' \ar[d, dashed, mapsto, "U"'] & \calA \ar[l, "q"'] \ar[d, dashed, mapsto, "U"]\\
        \llap{$\widehat{\calB'} = $\;}\calB &  \widehat\calA \ar[l, "q"]
    \end{tikzcd}
  \end{center}
  Moreover, for each $\calX, \calY \in \Ob(\CC^*)$ we have that
  $
    U_{\calX\calY} : \hom_{\CC^*}(\calX, \calY) \to \hom_\CC(\widehat\calX, \widehat\calY)
  $
  is a continuous and injective map between the two Polish spaces. Therefore, each $U_{\calX\calY}$ takes a Borel set onto a Borel set.
  So, the assumptions of the dual of Theorem~\ref{sbrd.thm.big1-dual-borel-weaker} are satisfied and we get:
  $$
    T^\partial_\CC(\calA, \widehat{\calE^\omega_\omega}) \le \sum_{\calB \in U^{-1}(\calA)} T^\partial_{\CC^*}(\calB, \calE^\omega_\omega).
  $$
  The statement now follows from the fact that $U^{-1}(\calA)$ is finite and
  $T^\partial_{\CC^*}(\calB, \calE^\omega_\omega) < \infty$ (Lemma~\ref{drdrs.lem.almost-final}).

  $(b)$ Let $\CC$ be the category of all reflexive and symmetric binary relational structures and quotient maps, and
  let $U : \CC^* \to \CC$ be the functor defined by $U(\calA) = \widetilde \calA$ on objects
  and $U(f) = f$ on morphisms. By repeating the argument from $(a)$ we show that
  $U$ is well-defined on morphisms and that the assumptions of the dual of Theorem~\ref{sbrd.thm.big1-dual-borel-weaker} are satisfied.
  Therefore,
  $$
    T^\partial_\CC(\calA, \widetilde{\calE^\omega_\omega}) \le \sum_{\calB \in U^{-1}(\calA)} T^\partial_{\CC^*}(\calB, \calE^\omega_\omega) < \infty.
  $$
  This concludes the proof.
\end{proof}

\begin{THM}\label{drdrs.thm.big-ram-deg-nongraphs}
  Consider one of the following classes of structures:
  \begin{itemize}\itemsep -1pt
    \item all finite and countably infinite reflexive binary structures,
    \item all finite and countably infinite reflexive oriented graphs,
    \item all finite and countably infinite acyclic reflexive digraphs,
    \item all finite and countably infinite transitive reflexive digraphs,
    \item all finite and countably infinite partial orders,
    \item all finite and countably infinite reflexive graphs,
  \end{itemize}
  with respect to quotient maps. Let $\calU$ be a countably infinite structure in the class which is
  projectively universal for all the finite structures in the class. Then for every finite $\calA$ in the class we have that
  $T^\partial(\calA, \calU) < \infty$.
\end{THM}
\begin{proof}
  Follows immediately from Lemmas~\ref{drdrs.lem.bi-proj-general}, \ref{drdrs.lem.bi-proj} and~\ref{drdrs.lem.big-drd-aux-22}.
\end{proof}

In sharp contrast to the classes of binary reflexive structures we have considered above,
we are now going to show that the category $\TT^\fin$ of finite reflexive tournaments and surjective homomorphisms
does not have dual small Ramsey degrees. We will also show that no reflexive tournament is projectively universal
for the class of all finite reflexive tournaments, so no structure in $\Ob(\TT)$ has big dual Ramsey degrees.

A \emph{reflexive tournament} is a relational structure $(T, \Boxed\to)$ where
$\Boxed\to \subseteq T^2$ is a reflexive binary relation such that
for all $x \ne y$ exactly one of $x \to y$ or $y \to x$ holds. 
For any positive integer $n$ the \emph{reflexive acyclic tournament $A_n$} is a reflexive tournament
whose vertices are $\{0, 1, \ldots, n-1\}$ and where $x \to y$ if and only if $x \le y$.
If $(T, \Boxed\to)$ is a reflexive tournament and $X \subseteq T$, then $T[X]$ denotes the \emph{subtournament of $T$ induced by $X$},
that is, a tournament $(X, \Boxed\to)$ such that $x_1 \to x_2$ in $X$ if and only if $x_1 \to x_2$ in $T$.

If $S$ and $T$ are tournaments then $S \Rightarrow T$ denotes a tournament on the disjoint union $S \mathbin{\dot\cup} T$ where
the arrows within $S$ and $T$ are preserved and $s \to t$ for every $s \in S$ and every $t \in T$.
Note that $\Rightarrow$ is associative in the sense that $S \Rightarrow (T \Rightarrow U) \cong (S \Rightarrow T) \Rightarrow U$,
so it is safe to omit parentheses in longer expressions of this form.

A mapping $f : S \to T$ is a \emph{homomorphism} of reflexive tournaments
$(S, \Boxed\to)$ and $(T, \Boxed\to)$ if $x \to y$ in $S$ implies $f(x) \to f(y)$ in $T$, for all $x, y \in S$.
Note that every surjective homomorphism $f : (S, \Boxed\to) \to (T, \Boxed\to)$
is actually a quotient map.

We say that a reflexive tournament $(T, \Boxed\to)$ is an \emph{inflation} of a reflexive tournament $(S, \Boxed\to)$
if there is a surjective homomorphism from $(T, \Boxed\to)$ onto $(S, \Boxed\to)$. We say that reflexive tournaments $(S_1, \Boxed\to)$ and $(S_2, \Boxed\to)$ are \emph{siblings}
if there exists a reflexive tournament $(T, \Boxed\to)$ which is an inflation of both $(S_1, \Boxed\to)$ and
$(S_2, \Boxed\to)$.

\begin{LEM}\label{drdrs.lem.nonsiblings}
    There exists a countable family $\{T_n : n \ge 3 \text{ is odd}\}$ of finite reflexive tournaments 
    no two of which are siblings.
\end{LEM}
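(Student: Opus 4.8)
The plan is to realize each $T_n$ as an \emph{indecomposable} reflexive tournament and to exploit the fact that indecomposability is a strong obstruction to sharing a common inflation. Recall that a \emph{module} of a reflexive tournament $(T,\Boxed\to)$ is a set $M\subseteq T$ such that every $v\in T\setminus M$ satisfies either $v\to m$ for all $m\in M$, or $m\to v$ for all $m\in M$; the singletons and $T$ itself are the \emph{trivial} modules, and $T$ is \emph{prime} if it has no others. Since a surjective homomorphism of reflexive tournaments is a quotient map, giving a surjective homomorphism $f:T\to S$ is the same as giving a partition of $T$ into modules (the fibres $f^{-1}(s)$, which one checks are modules using the tournament property of $T$) whose quotient tournament is isomorphic to $S$. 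Thus $T$ is an inflation of $S$ precisely when $T$ carries such a modular partition with quotient $S$, and two tournaments are siblings precisely when some $T$ carries two modular partitions whose quotients are the two tournaments.

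The heart of the argument is the rigidity statement: \emph{if a reflexive tournament $T$ admits a surjective homomorphism onto a prime tournament $S$ with $|S|\ge 3$, then $S$ is isomorphic to the prime quotient at the root of the modular decomposition of $T$; in particular $T$ has, up to isomorphism, at most one prime quotient of size $\ge 3$.} To prove this I would invoke the (unique) modular decomposition of $T$: the vertex set splits canonically into its maximal proper strong modules $M_1,\dots,M_k$, and the quotient $R=T/\{M_1,\dots,M_k\}$ is either a transitive tournament $A_k$ or a prime tournament on $k\ge 3$ vertices. Given any congruence $Q$ with prime quotient $T/Q$ of size $\ge 3$, I would rule out every possibility except $Q=\{M_1,\dots,M_k\}$ with $R$ prime: merging two of the $M_i$ is impossible because unions of vertices of a prime quotient are never modules; splitting some $M_i$ into at least two blocks substitutes a tournament on $\ge 2$ vertices into a single vertex of $R$, creating a non-trivial module and destroying primeness; and if the root is transitive then every quotient of size $\ge 3$ still has a transitive, hence decomposable, top level (the upper or lower part of the transitive order being a non-trivial module). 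Hence $T/Q\cong R$ is forced, so any two prime quotients of $T$ of size $\ge 3$ are isomorphic.

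Combining this with the reformulation of the first paragraph yields the non-sibling principle: if $S_1,S_2$ are prime with $|S_1|,|S_2|\ge 3$ and admit a common inflation $T$, then both are isomorphic to the unique prime quotient of $T$, whence $S_1\cong S_2$. It therefore suffices to exhibit a prime reflexive tournament of every odd order $n\ge 3$. For this I would take the \emph{rotational tournament} $R_n$ on $\ZZ_n$ in which $i\to j$ iff $j-i \bmod n\in\{1,2,\dots,(n-1)/2\}$ and set $T_n=R_n$; distinct orders make the $T_n$ pairwise non-isomorphic, and the principle above makes them pairwise non-siblings once primeness is established. The main remaining work, and the step I expect to be the most delicate, is verifying that each $R_n$ is prime. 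Here I would use that $R_n$ is vertex-transitive (so any module of size $\ge 2$ may be assumed to contain $0$) and that $v\mapsto -v$ reverses all arrows (so modules are preserved, letting me assume a chosen element lies in the out-arc). For a module $M$ containing $0$ and some $a\ne 0$, every vertex in the symmetric difference of the out-neighbourhoods of $0$ and of $a$ must lie in $M$; since these out-neighbourhoods are two arcs of length $(n-1)/2$ differing by a nonzero rotation, that symmetric difference is nonempty, and iterating the argument forces first $1\in M$ and then $M=\ZZ_n$. Thus $R_n$ has only trivial modules, completing the construction.
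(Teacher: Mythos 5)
Your proposal is correct, and it constructs exactly the same family as the paper --- the rotational (circulant) tournaments on $n$ vertices for odd $n \ge 3$ --- but it proves pairwise non-siblinghood by a genuinely different route. The paper argues directly: given a hypothetical common inflation $T$ of $T_m$ and $T_n$ with surjections $f$ and $g$, it forms the $m \times n$ 01-matrix recording which pairs $(f(v), g(v))$ are realized, normalizes it using cyclic permutations (which are automorphisms of every $T_k$), and derives a contradiction through a chain of five claims showing that some row must be identically zero. Your argument instead passes through modular decomposition theory: surjective homomorphisms of reflexive tournaments correspond to modular partitions; Gallai's decomposition into maximal proper strong modules, with its linear-or-prime root quotient, forces any prime quotient on at least $3$ vertices to coincide with the root quotient, so a tournament has at most one such quotient up to isomorphism, and therefore non-isomorphic prime tournaments are never siblings; it then remains to verify that each rotational tournament is prime, which your symmetric-difference-of-out-neighbourhoods iteration does correctly (with the reversal $v \mapsto -v$ handling elements of the in-arc, and the chain $1 \in M$, then $\frac{n-1}{2}+1 \in M$, then $M = \ZZ_n$ completing it). Both arguments are sound. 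Yours buys generality and structural insight: it shows that \emph{any} family of pairwise non-isomorphic prime tournaments of size at least $3$ works, isolating the real reason the construction succeeds (uniqueness of the prime quotient), rather than exploiting special properties of one family. The price is machinery: to make the sketch fully rigorous you must prove or cite Gallai's theorem for tournaments and make explicit two facts used silently --- that no module can overlap a strong module (which is what guarantees that every block of your congruence $Q$ either refines some $M_i$ or is a union of several $M_i$, so your merge/split case analysis is exhaustive), and that images of modules under quotient maps are again modules --- whereas the paper's proof, though heavier in case analysis, is entirely elementary and self-contained.
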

\begin{proof}
    For each odd $n \ge 3$ define a tournament $T_n$ as follows: label the vertices by $1, 2, \ldots, n$
    in a cyclic order and then orient the edges in such a way that for each vertex $v$,
    each of the previous $\frac{n-1}{2}$ vertices dominate $v$, while $v$ dominates the remaining $\frac{n-1}{2}$ vertices.
    Let us show that $T_m$ and $T_n$ are not siblings whenever $m$ and $n$ are distinct odd integers.

    Suppose this is not the case. Then there exist integers $m < n$ with $m = 2p+1$ and $n = 2q + 1$,
    and a tournament $T$ with surjective homomorphisms $f : T \to T_m$ and $g : T \to T_n$.
    Define a 01-matrix $A = [a_{i,j}]_{m\times n}$ so that $a_{i,j}$ is 1 if and only if there exists
    a vertex $v$ in $T$ such that $f(v) = i$ and $g(v) = j$; otherwise we let $a_{i,j} = 0$.

    Clearly, in each row and and in each column at least one entry has to be 1.
    Since there are at least $n > m$ ones in the matrix, there is a row in which at least two entries are~1.
    Note that we can cyclically permute
    columns and rows of the matrix and still obtain a matrix that corresponds to a pair of surjective homomorphisms
    $T \to T_m$ and $T \to T_n$ (because cyclic permutations are automorphisms of every $T_k$ in the family).
    Therefore, we can cyclically permute the rows of the matrix until we reach the position where the last row contains
    at least two 1's, and then we can cyclically permute the columns of the matrix until we reach the position where
    $a_{m,1} = 1$ and $a_{m, s} = 1$ for some $s \in \{2, 3, \ldots, q+1\}$.
    Without loss of generality we may assume that
    this matrix corresponds to the surjective homomorphisms $f : T \to T_m$ and $g : T \to T_n$.
    Let $u$ be a vertex in $T$ such that $f(u) = m$ and $g(u) = 1$, and let
    $v$ be a vertex in $T$ such that $f(v) = m$ and $g(v) = s$.

    \medskip

    Claim 1. $a_{2p,j} = 0$ for all $j \in \{2, 3, \ldots, q+1\}$.
    
    Proof. Assume that $a_{2p,j} = 1$ for some $j \in \{2, 3, \ldots, q+1\}$. Then there is a vertex $x$ in $T$ such that $f(x) = 2p$ and $g(x) = j$.
    Because $2p \to m$ in $T_m$ it follows that $x \to u$ in $T$. On the other hand, because $1 \to j$ in $T_n$
    it follows that $u \to x$ in $T$. Contradiction.

    \medskip

    Claim 2. $a_{i,q+2} = 0$ for all $i \in \{1, 2, \ldots, p\}$.

    Proof. Analogous to the proof of Claim 1: if $a_{i,q+2} = 1$ for some $i \in \{1, 2, \ldots, p\}$
    then we get the contradiction from the fact that $m \to i$ in $T_m$ and $q+2 \to 1$ in $T_n$.

    \medskip

    Claim 3. $a_{i,q+2} = 0$ for all $i \in \{p+1, p+2, \ldots, 2p\}$.

    Proof. The proof of this claim uses the fact that $a_{m, s} = 1$.
    Assume that $a_{i, q+2} = 1$ for some $i \in \{p+1, p+2, \ldots, 2p\}$.
    Then there is a vertex $z$ in $T$ such that $f(z) = i$ and $g(z) = q+2$.
    Because $i \to m$ in $T_m$ it follows that $z \to v$ in $T$. On the other hand, because $s \to q+2$ in $T_n$
    it follows that $v \to z$ in $T$. Contradiction.

    \medskip

    Claim 4. $a_{m,q+2} = 1$.

    Proof. By Claims 2 and 3 all the other entries in column $q+2$ are zero, so $a_{m,q+2}$ must be 1 because every column
    contains at least one entry equal to~1.

    \medskip

    Claim 5. $a_{2p,j} = 0$ for all $j \in \{q+3, \ldots, n, 1\}$.

    Proof. Analogous to the proof of Claim 1 using Claim 4: if $a_{2p,j} = 1$ for some $j \in \{q+3, \ldots, n, 1\}$
    then we get the contradiction from the fact that $2p \to m$ in $T_m$ and $q+2 \to j$ in $T_n$.

    \medskip

    Claims 1, 3 and 5 now yield that all the entries in row $2p$ are 0 -- contradiction with the fact that every column
    contains at least one entry equal to~1.
\end{proof}

\begin{THM}\label{drdrs.thm.fintrounaments}
  The class of finite reflexive tournaments taken with surjective homomorphisms
  does not have dual small Ramsey degrees.
\end{THM}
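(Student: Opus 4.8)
The plan is to exhibit a single object whose dual small Ramsey degree is infinite; the class then fails to have dual small Ramsey degrees. I take $A = A_2$, the two-element acyclic tournament $0 \to 1$, and observe that a surjective homomorphism $C \to A_2$ is exactly a \emph{dominating $2$-cut} of $C$, i.e.\ a partition $(V_0, V_1)$ with every edge between the parts going from $V_0$ to $V_1$; such cuts can only occur between strongly connected components, so $\hom(C, A_2)$ is the set of edges of the condensation of $C$ that one may ``cut''. As patterns I use, for each $N$, the tournament $B_N = T_{m_1} \Rightarrow T_{m_2} \Rightarrow \dots \Rightarrow T_{m_N}$, where $m_1 < \dots < m_N$ are distinct odd integers and the $T_{m_i}$ are the pairwise non-siblings supplied by Lemma~\ref{drdrs.lem.nonsiblings}. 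Each $T_{m_i}$ is strongly connected (being a regular rotational tournament), so the strongly connected components of $B_N$ are exactly the blocks $T_{m_1}, \dots, T_{m_N}$ and its condensation is the linear order on $N$ points; consequently $\hom(B_N, A_2)$ consists of precisely the $N-1$ cuts separating an initial segment of blocks from the rest.

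The crux is a rigidity statement forcing the cuts contributed by any $w$ to be canonical. First I prove that an inflation of a strongly connected tournament is again strongly connected (lift a directed path in the base through arbitrary fibre representatives, and route within a fibre by leaving and returning through neighbouring fibres). Hence, for any $C$ with $C \to B_N$, the preimage $E_i := w^{-1}(V(T_{m_i}))$ is a strongly connected inflation of $T_{m_i}$; since $E_1 \Rightarrow \dots \Rightarrow E_N$ in the dominance order, the $E_i$ are exactly the strongly connected components of $C$, so this partition is \emph{intrinsic to $C$} and independent of $w$. Because a homomorphism carries each component into a single component, $w$ descends to a surjection of condensations, which here is a monotone surjection between two $N$-element linear orders and hence the identity; thus every $w$ satisfies $w^{-1}(V(T_{m_i})) = E_i$ with $w$ mapping $E_i$ \emph{onto} $T_{m_i}$. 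Therefore the set $\hom(B_N, A_2) \circ w$ is, for every $w$, exactly the $N-1$ cuts $\big(E_1 \cup \dots \cup E_i,\; E_{i+1} \cup \dots \cup E_N\big)$, $1 \le i \le N-1$. I now colour each cut $(V_0, V_1)$ of $C$ intrinsically: let $Y$ be the source (top) strongly connected component of $C[V_1]$, and set $\chi_C(V_0, V_1) = j$ if there is a surjective homomorphism $Y \to T_{m_j}$, and $\chi_C(V_0, V_1) = 0$ otherwise. This is where Lemma~\ref{drdrs.lem.nonsiblings} enters: the index $j$ is unique, since $Y \to T_{m_i}$ and $Y \to T_{m_j}$ would make $Y$ a common inflation, i.e.\ $T_{m_i}$ and $T_{m_j}$ siblings, forcing $i = j$. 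Only $N+1$ colours are used, so I fix $k = N+1$.

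Evaluating on the canonical cuts, the source component of $C[E_{i+1} \cup \dots \cup E_N]$ is $E_{i+1}$, which surjects onto $T_{m_{i+1}}$, so that cut receives colour $i+1$; as $i$ runs through $1, \dots, N-1$ these colours $2, 3, \dots, N$ are pairwise distinct (again by non-siblingness). Since precomposition by the epi $w$ is injective on $\hom(B_N, A_2)$, we get $|\chi_C(\hom(B_N, A_2) \circ w)| = N-1$ for \emph{every} $w \in \hom(C, B_N)$, and the same bound holds vacuously when $\hom(C, B_N) = \0$. Hence no $C$ witnesses $C \to (B_N)^{A_2}_{N+1,\, N-2}$ in $(\TT^\fin)^{\op}$, which gives $t^\partial(A_2) \ge N-1$ for arbitrarily large $N$, so $t^\partial(A_2) = \infty$ and the class cannot have finite dual small Ramsey degrees. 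I expect the main obstacle to be the rigidity step: verifying carefully that inflations of the $T_m$ stay strongly connected and that this pins down the component decomposition of $C$ independently of $w$ (so that the cuts seen by every $w$ coincide) — everything downstream then rests on the non-sibling property to keep the $N-1$ colours distinct.
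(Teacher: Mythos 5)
Your proposal is correct, and it follows the same backbone as the paper's proof: both arguments rest on the non-sibling family of Lemma~\ref{drdrs.lem.nonsiblings}, both take as pattern a $\Rightarrow$-chain built from that family, and both colour surjections onto a small acyclic tournament by recording which $T_j$ a distinguished piece of the source inflates (with non-siblingness guaranteeing the colour is well defined and that the canonical morphisms composed with any $w$ realize many distinct colours). The difference is in the test object and the resulting technical burden. The paper tests at $A_3$ and caps the chain with trivial endpoints, $B_n = 1 \Rightarrow T_1 \Rightarrow \cdots \Rightarrow T_n \Rightarrow 1$; the point of this design is that the middle fibre $\phi_i^{-1}(1)$ of each canonical surjection $\phi_i : B_n \to A_3$ is literally the single block $T_i$, so the colour of $\phi_i \circ w$ is read off from the inflation $w^{-1}(T_i) \to T_i$ with no structural analysis of the source tournament whatsoever. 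You test at $A_2$, where a surjection is a dominating cut and the part above a cut is a \emph{union} of blocks, so you need an extra rigidity step the paper never requires: that inflations of strongly connected tournaments (on at least two vertices) are strongly connected, hence the block preimages $E_i = w^{-1}(V(T_{m_i}))$ are exactly the strongly connected components of $C$, which lets you isolate, intrinsically in $C$, ``the first block above the cut'' as the source component and colour by its inflation type. (Your sketch of that step is sound: in a tournament, any edge between distinct fibres must follow the base orientation, so paths lift freely.) What each approach buys: the paper's is shorter and purely combinatorial about fibres; yours proves more, namely that the orbit $\Surj(B_N, A_2) \circ w$ is the \emph{same} set of $N-1$ canonical cuts for every $w$, and it establishes infinite dual degree already for the two-element tournament $A_2$ rather than for $A_3$. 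Either conclusion kills dual small Ramsey degrees for the whole class.
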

\begin{proof}
  Let us show that with respect to surjective homomorphisms the acyclic three-element tournament $A_3$
  does not have a dual small Ramsey degree in the class of all finite reflexive tournaments.
  Let $T_1$, $T_2$, $T_3$, \ldots, be a family of finite reflexive tournaments no two of which are siblings, say a family
  given in Lemma~\ref{drdrs.lem.nonsiblings}. Take any $n \in \NN$ and let $B_n$ denote the following tournament:
  $$
    1 \Rightarrow T_1 \Rightarrow T_2 \Rightarrow \ldots \Rightarrow T_n \Rightarrow 1,
  $$
  where $1$ denotes the trivial one-element reflexive tournament.
  Take any tournament $S$ with $\Surj(S, B_n) \ne \0$ (here, $\Surj(S, T)$ denotes the set of all surjective homomorphisms
  from $S$ to $T$), and consider the coloring
  $$
    \chi : \Surj(S, A_3) \to \{1, 2, \ldots, n\}
  $$
  defined as follows: if there is a $j \in \{1, 2, \ldots, n\}$ such that 
  $S[f^{-1}(1)]$ is an inflation of $T_j$ we let $\chi(f) = j$; if, however, $S[f^{-1}(1)]$ is an inflation of $T_j$ for no
  $j \in \{1, 2, \ldots, n\}$ we let $\chi(f) = 1$.
  By the choice of the family $T_1$, $T_2$, $T_3$, \ldots, it is not possible for $S[f^{-1}(1)]$ to be both
  an inflation of $T_i$ and an inflation of $T_j$ for some $i \ne j$, so $\chi$ is well defined.

  \begin{figure}
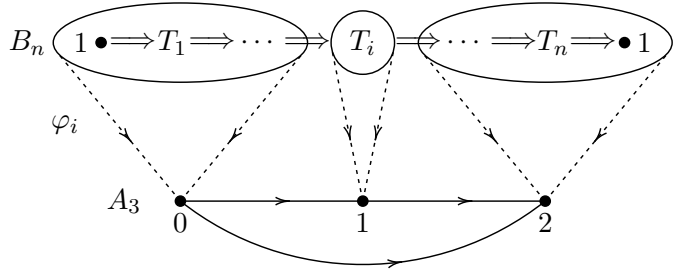

    \centering
    {\def\R{$\Longrightarrow$}%
\begin{pgfpicture}
  \pgfsetxvec{\pgfpoint{\acadpgfunit}{0pt}}
  \pgfsetyvec{\pgfpoint{0pt}{\acadpgfunit}}
  \pgfsetlinewidth{\acadpgflinewidth}
  \pgftransformshift{\pgfpointxy{0.0}{0.0}}

  \begin{pgfscope}
    \pgfpathellipse{\pgfpointxy{312.5}{375.0}}{\pgfpointxy{200.39}{0.0}}{\pgfpointxy{0.0}{62.5}}
    \pgfusepath{stroke}
  \end{pgfscope}
  \begin{pgfscope}
    \pgfpathellipse{\pgfpointxy{887.5}{375.0}}{\pgfpointxy{200.0}{0.0}}{\pgfpointxy{0.0}{62.5}}
    \pgfusepath{stroke}
  \end{pgfscope}
  \begin{pgfscope}
    \pgfpathellipse{\pgfpointxy{600.0}{375.0}}{\pgfpointxy{50.0}{0.0}}{\pgfpointxy{0.0}{50.0}}
    \pgfusepath{stroke}
  \end{pgfscope}
  \begin{pgfscope}
    \pgfpathmoveto{\pgfpointxy{312.5}{125.0}}
    \pgfpathlineto{\pgfpointxy{600.0}{125.0}}
    \pgfusepath{stroke}
  \end{pgfscope}
  \begin{pgfscope}
    \pgfpathmoveto{\pgfpointxy{600.0}{125.0}}
    \pgfpathlineto{\pgfpointxy{887.5}{125.0}}
    \pgfusepath{stroke}
  \end{pgfscope}
  \begin{pgfscope}
    \pgfpathmoveto{\pgfpointxy{462.5}{131.029}}
    \pgfpatharcaxes{240.0}{270.0}{\pgfpointxy{45.0}{0.0}}{\pgfpointxy{0.0}{45.0}}
    \pgfusepath{stroke}
  \end{pgfscope}
  \begin{pgfscope}
    \pgfpathmoveto{\pgfpointxy{485.0}{125.0}}
    \pgfpatharcaxes{90.0}{120.0}{\pgfpointxy{45.0}{0.0}}{\pgfpointxy{0.0}{45.0}}
    \pgfusepath{stroke}
  \end{pgfscope}
  \begin{pgfscope}
    \pgfpathmoveto{\pgfpointxy{750.0}{131.029}}
    \pgfpatharcaxes{240.0}{270.0}{\pgfpointxy{45.0}{0.0}}{\pgfpointxy{0.0}{45.0}}
    \pgfusepath{stroke}
  \end{pgfscope}
  \begin{pgfscope}
    \pgfpathmoveto{\pgfpointxy{772.5}{125.0}}
    \pgfpatharcaxes{90.0}{120.0}{\pgfpointxy{45.0}{0.0}}{\pgfpointxy{0.0}{45.0}}
    \pgfusepath{stroke}
  \end{pgfscope}
  \begin{pgfscope}
    \pgfpathmoveto{\pgfpointxy{312.5}{125.0}}
    \pgfpatharcaxes{231.642}{308.358}{\pgfpointxy{463.281}{0.0}}{\pgfpointxy{0.0}{463.281}}
    \pgfusepath{stroke}
  \end{pgfscope}
  \begin{pgfscope}
    \pgfpathmoveto{\pgfpointxy{638.833}{32.5095}}
    \pgfpatharcaxes{245.701}{277.672}{\pgfpointxy{42.226}{0.0}}{\pgfpointxy{0.0}{42.226}}
    \pgfusepath{stroke}
  \end{pgfscope}
  \begin{pgfscope}
    \pgfpathmoveto{\pgfpointxy{661.846}{29.1466}}
    \pgfpatharcaxes{97.6716}{129.642}{\pgfpointxy{42.226}{0.0}}{\pgfpointxy{0.0}{42.226}}
    \pgfusepath{stroke}
  \end{pgfscope}
  \begin{pgfscope}
    \pgfsetdash{{1.5pt}{2pt}}{0pt}
    \pgfpathmoveto{\pgfpointxy{312.5}{125.0}}
    \pgfpathlineto{\pgfpointxy{506.527}{359.375}}
    \pgfusepath{stroke}
  \end{pgfscope}
  \begin{pgfscope}
    \pgfsetdash{{1.5pt}{2pt}}{0pt}
    \pgfpathmoveto{\pgfpointxy{312.5}{125.0}}
    \pgfpathlineto{\pgfpointxy{118.473}{359.375}}
    \pgfusepath{stroke}
  \end{pgfscope}
  \begin{pgfscope}
    \pgfsetdash{{1.5pt}{2pt}}{0pt}
    \pgfpathmoveto{\pgfpointxy{887.5}{125.0}}
    \pgfpathlineto{\pgfpointxy{1081.53}{359.375}}
    \pgfusepath{stroke}
  \end{pgfscope}
  \begin{pgfscope}
    \pgfsetdash{{1.5pt}{2pt}}{0pt}
    \pgfpathmoveto{\pgfpointxy{887.5}{125.0}}
    \pgfpathlineto{\pgfpointxy{693.473}{359.375}}
    \pgfusepath{stroke}
  \end{pgfscope}
  \begin{pgfscope}
    \pgfsetdash{{1.5pt}{2pt}}{0pt}
    \pgfpathmoveto{\pgfpointxy{600.0}{125.0}}
    \pgfpathlineto{\pgfpointxy{648.99}{365.0}}
    \pgfusepath{stroke}
  \end{pgfscope}
  \begin{pgfscope}
    \pgfsetdash{{1.5pt}{2pt}}{0pt}
    \pgfpathmoveto{\pgfpointxy{600.0}{125.0}}
    \pgfpathlineto{\pgfpointxy{551.01}{365.0}}
    \pgfusepath{stroke}
  \end{pgfscope}
  \begin{pgfscope}
    \pgfpathmoveto{\pgfpointxy{581.811}{244.251}}
    \pgfpatharcaxes{161.537}{191.537}{\pgfpointxy{45.0}{0.0}}{\pgfpointxy{0.0}{45.0}}
    \pgfusepath{stroke}
  \end{pgfscope}
  \begin{pgfscope}
    \pgfpathmoveto{\pgfpointxy{580.404}{221.0}}
    \pgfpatharcaxes{11.537}{41.537}{\pgfpointxy{45.0}{0.0}}{\pgfpointxy{0.0}{45.0}}
    \pgfusepath{stroke}
  \end{pgfscope}
  \begin{pgfscope}
    \pgfpathmoveto{\pgfpointxy{630.003}{241.84}}
    \pgfpatharcaxes{138.463}{168.463}{\pgfpointxy{45.0}{0.0}}{\pgfpointxy{0.0}{45.0}}
    \pgfusepath{stroke}
  \end{pgfscope}
  \begin{pgfscope}
    \pgfpathmoveto{\pgfpointxy{619.596}{221.0}}
    \pgfpatharcaxes{-11.537}{18.463}{\pgfpointxy{45.0}{0.0}}{\pgfpointxy{0.0}{45.0}}
    \pgfusepath{stroke}
  \end{pgfscope}
  \begin{pgfscope}
    \pgfpathmoveto{\pgfpointxy{800.185}{239.926}}
    \pgfpatharcaxes{189.62}{219.62}{\pgfpointxy{45.0}{0.0}}{\pgfpointxy{0.0}{45.0}}
    \pgfusepath{stroke}
  \end{pgfscope}
  \begin{pgfscope}
    \pgfpathmoveto{\pgfpointxy{809.889}{218.75}}
    \pgfpatharcaxes{39.6196}{69.6196}{\pgfpointxy{45.0}{0.0}}{\pgfpointxy{0.0}{45.0}}
    \pgfusepath{stroke}
  \end{pgfscope}
  \begin{pgfscope}
    \pgfpathmoveto{\pgfpointxy{984.103}{232.237}}
    \pgfpatharcaxes{110.38}{140.38}{\pgfpointxy{45.0}{0.0}}{\pgfpointxy{0.0}{45.0}}
    \pgfusepath{stroke}
  \end{pgfscope}
  \begin{pgfscope}
    \pgfpathmoveto{\pgfpointxy{965.111}{218.75}}
    \pgfpatharcaxes{320.38}{350.38}{\pgfpointxy{45.0}{0.0}}{\pgfpointxy{0.0}{45.0}}
    \pgfusepath{stroke}
  \end{pgfscope}
  \begin{pgfscope}
    \pgfpathmoveto{\pgfpointxy{225.185}{239.926}}
    \pgfpatharcaxes{189.62}{219.62}{\pgfpointxy{45.0}{0.0}}{\pgfpointxy{0.0}{45.0}}
    \pgfusepath{stroke}
  \end{pgfscope}
  \begin{pgfscope}
    \pgfpathmoveto{\pgfpointxy{234.889}{218.75}}
    \pgfpatharcaxes{39.6196}{69.6196}{\pgfpointxy{45.0}{0.0}}{\pgfpointxy{0.0}{45.0}}
    \pgfusepath{stroke}
  \end{pgfscope}
  \begin{pgfscope}
    \pgfpathmoveto{\pgfpointxy{409.103}{232.237}}
    \pgfpatharcaxes{110.38}{140.38}{\pgfpointxy{45.0}{0.0}}{\pgfpointxy{0.0}{45.0}}
    \pgfusepath{stroke}
  \end{pgfscope}
  \begin{pgfscope}
    \pgfpathmoveto{\pgfpointxy{390.111}{218.75}}
    \pgfpatharcaxes{320.38}{350.38}{\pgfpointxy{45.0}{0.0}}{\pgfpointxy{0.0}{45.0}}
    \pgfusepath{stroke}
  \end{pgfscope}
  \begin{pgfscope}
    \pgfsetfillcolor{black}
    \pgfpathellipse{\pgfpointxy{1012.5}{375.0}}{\pgfpointxy{8.0}{0.0}}{\pgfpointxy{0.0}{8.0}}
    \pgfusepath{fill,stroke}
  \end{pgfscope}
  \begin{pgfscope}
    \pgfsetfillcolor{black}
    \pgfpathellipse{\pgfpointxy{187.5}{375.0}}{\pgfpointxy{8.0}{0.0}}{\pgfpointxy{0.0}{8.0}}
    \pgfusepath{fill,stroke}
  \end{pgfscope}
  \begin{pgfscope}
    \pgfsetfillcolor{black}
    \pgfpathellipse{\pgfpointxy{600.0}{125.0}}{\pgfpointxy{8.0}{0.0}}{\pgfpointxy{0.0}{8.0}}
    \pgfusepath{fill,stroke}
  \end{pgfscope}
  \begin{pgfscope}
    \pgfsetfillcolor{black}
    \pgfpathellipse{\pgfpointxy{887.5}{125.0}}{\pgfpointxy{8.0}{0.0}}{\pgfpointxy{0.0}{8.0}}
    \pgfusepath{fill,stroke}
  \end{pgfscope}
  \begin{pgfscope}
    \pgfsetfillcolor{black}
    \pgfpathellipse{\pgfpointxy{312.5}{125.0}}{\pgfpointxy{8.0}{0.0}}{\pgfpointxy{0.0}{8.0}}
    \pgfusepath{fill,stroke}
  \end{pgfscope}
  \pgftext[at={\pgfpointxy{600.0}{375.0}}]{$T_i$}
  \pgftext[at={\pgfpointxy{762.5}{375.0}}]{$\cdots$}
  \pgftext[at={\pgfpointxy{900.0}{375.0}}]{$T_n$}
  \pgftext[at={\pgfpointxy{300.0}{375.0}}]{$T_1$}
  \pgftext[at={\pgfpointxy{437.5}{375.0}}]{$\cdots$}
  \pgftext[at={\pgfpointxy{237.5}{375.0}}]{\R}
  \pgftext[at={\pgfpointxy{362.5}{375.0}}]{\R}
  \pgftext[at={\pgfpointxy{512.5}{375.0}}]{\R}
  \pgftext[at={\pgfpointxy{687.5}{375.0}}]{\R}
  \pgftext[at={\pgfpointxy{837.5}{375.0}}]{\R}
  \pgftext[at={\pgfpointxy{962.5}{375.0}}]{\R}
  \pgftext[left,at={\pgfpointxy{1032.5}{375.0}}]{1}
  \pgftext[right,at={\pgfpointxy{167.5}{375.0}}]{1}
  \pgftext[right,at={\pgfpointxy{100.5}{375.0}}]{$B_n$}
  \pgftext[top,at={\pgfpointxy{312.5}{105.0}}]{0}
  \pgftext[top,at={\pgfpointxy{600.0}{105.0}}]{1}
  \pgftext[top,at={\pgfpointxy{887.5}{105.0}}]{2}
  \pgftext[right,at={\pgfpointxy{250.5}{125.0}}]{$A_3$}
  \pgftext[at={\pgfpointxy{130.912}{244.994}}]{$\phi_i$}
\end{pgfpicture}
    }
    \caption{The construction from the proof of Theorem~\ref{drdrs.thm.fintrounaments}}
    \label{drdrs.fig.phi}
  \end{figure}
  
  Take any $w \in \Surj(S, B_n)$ and consider $\phi_1, \phi_2, \ldots, \phi_n \in \Surj(B_n, A_3)$
  where $\phi_i$, $1 \le i \le n$,
  is defined so that $\phi_i^{-1}(1) = T_i$, $\phi_i^{-1}(0)$ contains all the vertices of $B_n$
  that arrow $T_i$ and $\phi_i^{-1}(2)$ contains all the vertices of $B_n$ that $T_i$ arrows,
  see Fig.~\ref{drdrs.fig.phi}. It is now clear that for $1 \le i \le n$ we have that
  $S[(\phi_i \circ w)^{-1}(1)]$ is an inflation of $T_i$ whence
  $\chi(\phi_i \circ w) = i$. Therefore, $|\chi(\Surj(B_n, A_3) \circ w)| = n$.
  This completes the proof.
\end{proof}

It is even easier to settle the case of dual big Ramsey degrees for tournaments.
Lemma~\ref{drdrs.lem.nonsiblings} exhibits an infinite family of finite reflexive tournaments no two of which are siblings.
Therefore, a tournament which is projectively universal for the class of all finite reflexive tournaments cannot exist.
Consequently, there is no countably infinite reflexive tournament $\calS$ with 
$T^\partial(\calT, \calS) < \infty$ for all finite reflexive tournaments $\calT$.

\section{Metric spaces}
\label{drdrs.sec.met-spc}

Finally, let us take a look at metric spaces which can be understood as special relational structures over infinite binary languages.
A mapping $f : (M, d) \to (M', d')$ between two metric spaces is \emph{non-expansive} if $d'(f(x), f(y)) \le d(x, y)$ for all $x, y \in M$.
For a metric space $\calM = (M, d)$ let
$\Spec(\calM) = \{d(x, y) : x, y \in M \text{ and } x \ne y\}$. For a class $\CC$ of metric spaces let
$$
  \Spec(\CC) = \bigcup \{\Spec(\calM) : \calM \in \Ob(\CC)\}.
$$

A \emph{linearly ordered metric space} is a triple $\calM = (M, d, \Boxed\sqsubset)$
where $(M, d)$ is a metric space and $\sqsubset$ is a linear order on~$M$.
For a positive integer $n$ and a positive real number $\delta$ let
$$
\calE_{n,\delta} = (\{1, 2, \ldots, n\}, d_n^\delta) \text{\quad and\quad}
  \calE^<_{n,\delta} = (\{1, 2, \ldots, n\}, d_n^\delta, \Boxed<)
$$
where $<$ is the usual ordering of the integers and $d_n^\delta(x, y) = \delta$ whenever $x \ne y$.
A mapping $f : M \to M'$ is a
\emph{non-expansive rigid surjection} from $(M, d, \Boxed\sqsubset)$ to $(M', d', \Boxed{\sqsubset'})$ if
$f : (M, d) \to (M', d')$ is non-expansive and $f : (M, \Boxed\sqsubset) \to (M', \Boxed{\sqsubset'})$ is a rigid surjection.
We let $\Spec(M, d, \Boxed<) = \Spec(M, d)$.

Our starting point is a result from \cite{masul-drthms-relstru}
that the category of finite linearly ordered metric spaces and non-expansive rigid surjections has the dual Ramsey property.

\begin{LEM}\label{dthms.lem.met}\cite{masul-drthms-relstru}
  Let $\LMetNersFin$ denote the category of finite linearly ordered metric spaces and non-expansive rigid surjections.
  Let $\CC$ be a full subcategory of $\LMetNersFin$ such that for every positive integer $m$ and every $\delta \in \Spec(\CC)$
  there is an integer $n \ge m$ such that $\calE^<_{n,\delta} \in \Ob(\CC)$. Then
  $\CC$ has the dual Ramsey property. In particular, the category $\LMetNersFin$ has the dual Ramsey property.
\end{LEM}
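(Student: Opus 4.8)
The dual Ramsey property of $\CC$ is by definition the Ramsey property of $\CC^\op$, so the plan is to unwind the arrow notation in $\CC^\op$ and prove directly that for every $k \in \NN$ and every pair $\calA, \calB \in \Ob(\CC)$ admitting a non-expansive rigid surjection $\calB \to \calA$ there is a $\calC \in \Ob(\CC)$ such that every $k$-coloring of the non-expansive rigid surjections $\calC \to \calA$ is constant on the family $\{g \cdot w : g \in \hom_\CC(\calB, \calA)\}$ for a suitable non-expansive rigid surjection $w : \calC \to \calB$. I would piggyback on the Finite Dual Ramsey Theorem (that $\CHrs$ has the dual Ramsey property) by passing to underlying chains, using the hypothesis on $\CC$ only to supply a sufficiently transparent witness $\calC$; the whole transfer can, if one wishes, be repackaged as a pre-adjunction $F : \Ob(\CC^\op) \rightleftarrows \Ob(\CHrs^\op) : G$ and fed into Theorem~\ref{opos.thm.main}, but the direct route is cleaner.

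The witness will be an equidistant space. Put $\delta = \max\big(\Spec(\calA) \cup \Spec(\calB)\big)$; since $\calA, \calB \in \Ob(\CC)$ we have $\delta \in \Spec(\CC)$. The key observation is that for $\calC = \calE^<_{n,\delta}$ every rigid surjection of underlying chains $\{1 < \dots < n\} \to A$ is automatically non-expansive as a map $\calC \to \calA$: distinct points of $\calC$ sit at distance $\delta \ge \max \Spec(\calA)$, so $d_\calA$ of their images never exceeds the source distance. Hence $\hom_\CC(\calC, \calA) = \RSurj\big(\{1<\dots<n\}, A\big)$, and likewise for $\calB$ in place of $\calA$; that is, out of a maximal-distance equidistant space the metric constraint disappears and morphisms are exactly rigid surjections of chains. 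By contrast $\hom_\CC(\calB, \calA)$ is in general a proper subset of $\RSurj(B, A)$, and this asymmetry is what makes the transfer go through.

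Now apply the Finite Dual Ramsey Theorem to the underlying chains $A$ and $B$: it produces an $N$ for which $\{1 < \dots < N\}$ arrows $B$ over $A$ for $k$ colors in $\CHrs^\op$. A routine monotonicity argument --- pull a coloring of $\RSurj\big(\{1<\dots<n\}, A\big)$ back along a fixed rigid surjection $\{1<\dots<n\} \to \{1<\dots<N\}$ and push the resulting witness forward --- shows the same arrow holds for every $n \ge N$. Using the hypothesis on $\CC$, choose $n \ge N$ with $\calE^<_{n,\delta} \in \Ob(\CC)$ and set $\calC = \calE^<_{n,\delta}$. Given a $k$-coloring $\chi$ of $\hom_\CC(\calC, \calA) = \RSurj\big(\{1<\dots<n\}, A\big)$, the chain statement yields a rigid surjection $w : \{1<\dots<n\} \to B$ with $\{g \cdot w : g \in \RSurj(B, A)\}$ being $\chi$-monochromatic. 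By the observation above $w$ is a non-expansive rigid surjection $\calC \to \calB$, and restricting that monochromatic family to $g \in \hom_\CC(\calB, \calA) \subseteq \RSurj(B, A)$ keeps it monochromatic --- which is exactly the required conclusion.

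The main thing to get right is the pair of set relations $\hom_\CC(\calC, \calA) = \RSurj\big(\{1<\dots<n\}, A\big)$ and $\hom_\CC(\calB, \calA) \subseteq \RSurj(B, A)$: the first must be an \emph{equality} (so that the coloring imported from the chain world sees all morphisms and no spurious ones), while the second need only be an \emph{inclusion} (so that the metric target family is a monochromatic subfamily). The equidistant, maximal-$\delta$ choice of $\calC$ is precisely what forces the equality, and the hypothesis on $\CC$ is used for nothing more than producing arbitrarily large such $\calC$ inside the category; this is also why the final clause is immediate, since $\LMetNersFin$ contains every $\calE^<_{n,\delta}$ and so satisfies the hypothesis trivially. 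The only other point needing care is the monotonicity of the chain arrow in the size $n$, which is needed because the hypothesis guarantees large equidistant spaces but not ones of a prescribed size.
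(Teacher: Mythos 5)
You should first be aware that the paper contains no proof of Lemma~\ref{dthms.lem.met} to compare against: it is imported from \cite{masul-drthms-relstru} and used as a black box. The closest internal analogue of what you are proving is Lemma~\ref{lem.drdrs.eorg-rqm}, where the same kind of piggybacking on $\CHrs$ is packaged as a pre-adjunction and fed into Theorem~\ref{opos.thm.main}; so your proposal can only be judged on its own merits and against that style of argument.

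Judged so, your proof is correct, and its two load-bearing points are exactly the right ones. First, for $\delta \ge \max\bigl(\Spec(\calA) \cup \Spec(\calB)\bigr)$, non-expansiveness of any map out of $\calE^<_{n,\delta}$ into $\calA$ or $\calB$ is automatic, so by fullness of $\CC$ you get the equality $\hom_\CC(\calE^<_{n,\delta}, \calA) = \RSurj(\{1 < \dots < n\}, A)$ (and likewise with $\calB$ in place of $\calA$), while for the target family only the inclusion $\hom_\CC(\calB, \calA) \subseteq \RSurj(B, A)$ is needed; this is what lets a coloring of the metric hom-set be treated as a coloring of a chain hom-set, and lets the resulting monochromatic family restrict. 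Second, the monotonicity step (precompose the coloring with a fixed rigid surjection $\{1 < \dots < n\} \to \{1 < \dots < N\}$ and push the witness forward along the same map) is valid because rigid surjections compose, and it is genuinely needed since the hypothesis on $\CC$ only supplies equidistant spaces of \emph{some} size $n \ge m$. Compared with the pre-adjunction packaging, your direct transfer buys transparency: it makes explicit that the witness must depend on the pair $(\calA, \calB)$ through $\delta$, which is precisely the role of the $\Spec(\CC)$ hypothesis, and it avoids having to define a single object map $G$ on all chains. The only loose end is degenerate: if $\calA$ and $\calB$ are both one-point spaces then $\Spec(\calA) \cup \Spec(\calB) = \0$ and your $\delta$ is undefined; this case must be disposed of separately, which is trivial since $\hom_\CC(\calC, \calA)$ is then a singleton for every $\calC$, so $\calC = \calB$ and $w = \id_B$ work.
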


We can now upgrade this result and actually compute dual small Ramsey degrees of finite metric spaces
with respect to non-expansive surjections.

\begin{THM}
  The class of finite metric spaces has dual small Ramsey degrees with respect to non-expansive surjections.
  Actually, $t^\partial(\calM) = |M|!$ for every finite metric space $\calM = (M, d)$.
\end{THM}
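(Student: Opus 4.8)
The plan is to exhibit the category $\MetNes$ of finite metric spaces and non-expansive surjections as the base of an order-forgetting expansion and to transport the dual Ramsey property of linearly ordered metric spaces (Lemma~\ref{dthms.lem.met}) along it via the additive estimates of Theorem~\ref{sbrd.thm.small1}, applied in the opposite categories. Concretely, let $U : \LMetNersFin \to \MetNes$ be the forgetful functor erasing the linear order, so that for a finite metric space $\calM = (M,d)$ the fibre $U^{-1}(\calM)$ consists exactly of the structures $(M, d, \Boxed\sqsubset)$, one for each linear order $\sqsubset$ on $M$; hence $|U^{-1}(\calM)| = |M|!$. Since $t^\partial_\CC = t_{\CC^\op}$, the whole argument will be run for the expansion $U^\op : (\LMetNersFin)^\op \to \MetNes^\op$, and I will verify the hypotheses of Theorem~\ref{sbrd.thm.small1} in this dual form.

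First the upper bound. Non-expansive surjections are epimorphisms in $\MetNes$ (a surjection is determined on points under precomposition), so every morphism of $\MetNes^\op$ is mono, as required. The expansion $U^\op$ is reasonable with unique restrictions: given a non-expansive surjection $e : \calB \to \calA$ and a linear order $\sqsubset_B$ on $B$, the pushforward order defined by $a_1 \sqsubset_A a_2 \iff \min e^{-1}(a_1) \sqsubset_B \min e^{-1}(a_2)$ is the \emph{unique} linear order on $A$ turning $e$ into a rigid surjection, while non-expansiveness is untouched; thus $(A, d_A, \Boxed{\sqsubset_A}) = \restr{(B,d_B,\Boxed{\sqsubset_B})}{e}$ is the unique dual restriction. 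Dual reasonableness is the symmetric construction: given an order on $A$ one lists $A$ increasingly and concatenates the $e$-fibres in that order to obtain a compatible order on $B$. The first part of Theorem~\ref{sbrd.thm.small1} then yields $t^\partial_{\MetNes}(\calM) = t_{\MetNes^\op}(\calM) \le \sum_{\calM^* \in U^{-1}(\calM)} t_{(\LMetNersFin)^\op}(\calM^*)$, and every summand equals $1$ because $\LMetNersFin$ has the dual Ramsey property by Lemma~\ref{dthms.lem.met}, so the sum is $|M|!$.

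For the reverse inequality I will invoke the second part of Theorem~\ref{sbrd.thm.small1}, which additionally needs $(\LMetNersFin)^\op$ directed, $U^\op$ with unique restrictions (already shown), and $U^\op$ with the expansion property. Both directedness and the dual expansion property will be supplied by equidistant spaces $\calE_{n,\delta}$: when $\delta \ge \diam(\calA)$ every map out of a $\delta$-equidistant space is automatically non-expansive, and between any two finite chains whose domain is at least as large as the codomain there is a rigid surjection (e.g.\ the monotone collapse). So given $\calA,\calB$, an $\calE_{n,\delta}$ with $n \ge \max(|A|,|B|)$ and $\delta \ge \max(\diam\calA,\diam\calB)$, under any ordering, non-expansively rigidly surjects onto the given ordered structures on $\calA$ and $\calB$, giving dual directedness; and fixing $\calA$ and taking $\calB = \calE_{n,\delta}$ with $n \ge |A|$, $\delta \ge \diam\calA$ produces $\calB^* \to \calA^*$ for \emph{all} orderings of both, i.e.\ the dual expansion property. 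This upgrades the inequality to $t^\partial_{\MetNes}(\calM) = \sum_{\calM^* \in U^{-1}(\calM)} 1 = |M|!$.

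The main obstacle is not any single deep step --- Lemma~\ref{dthms.lem.met} carries the Ramsey-theoretic weight --- but the careful orchestration of the dual conditions, and I expect the most delicate verification to be the dual expansion property, where one must exhibit a single space rigidly surjecting onto \emph{every} ordering of the target simultaneously; this is exactly where the equidistant spaces $\calE_{n,\delta}$, with the slack $\delta \ge \diam\calA$ guaranteeing non-expansiveness, are indispensable. A secondary point to keep honest is that $U^{-1}(\calM)$ must be counted as the set of all orders on the \emph{fixed} underlying set $M$, giving precisely $|M|!$ rather than a count up to isomorphism.
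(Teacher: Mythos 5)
Your proposal is correct and follows essentially the same route as the paper: the same order-forgetting expansion $U : \LMetNersFin \to \MetNes^\fin$, the same appeal to Lemma~\ref{dthms.lem.met} for the degree-one summands, the same application of the (dual of) Theorem~\ref{sbrd.thm.small2}, and the same equidistant spaces $\calE_{n,\delta}$ as witnesses for dual directedness and the dual expansion property. Your explicit pushforward formula for the unique dual restriction and the fibre-concatenation argument for dual reasonableness are exactly the constructions the paper leaves as ``easy to see.''
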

\begin{proof}
  Just as a notational convenience let $\CC = \MetNes^\fin$, let $\CC^* = \LMetNersFin$ and
  let $U : \CC^* \to \CC$ be the forgetful functor that forgets the ordering: $U(M, d, \Boxed<) = (M, d)$ on objects
  and $U(f) = f$ on morphisms.

  Let us show that $U$ is a dually reasonable precompact expansion with unique dual restrictions and the dual expansion property.
  It is clear that $U$ is a precompact expansion because $|U^{-1}(M, d)| = |M|!$ for every finite metric space $(M, d)$.

  To see that $U$ is dually reasonable take any linearly ordered finite metric space $(M, d, \Boxed<)$
  and any non-expansive surjective map $f : (M', d') \to (M, d)$. Then it is easy to find a linear ordering $<'$ of $M'$
  so that $f : (M', \Boxed{<'}) \to (M, \Boxed<)$ is a rigid surjection. This turns~$f$ into a non-expansive rigid surjection
  $f : (M', d', \Boxed{<'}) \to (M, d, \Boxed<)$.

  To see that $U$ has unique dual restrictions take any linearly ordered finite metric space $(M, d, \Boxed<)$
  and any non-expansive surjective map $f : (M, d) \to (M', d')$. Then there is a unique linear ordering $<'$ of $M'$
  so that $f : (M, \Boxed<) \to (M', \Boxed{<'})$ is a rigid surjection, which turns~$f$ into a non-expansive rigid surjection
  $f : (M, d, \Boxed<) \to (M', d', \Boxed{<'})$.

  Finally, let us show that $U$ has the dual expansion property. Take any finite metric space $(M, d)$, let
  $\delta = \max(\Spec(M, d))$ and $n = |M|$. Then $\calE_{n, \delta}$ is the witness for the dual expansion property
  for $(M, d)$. Namely, take any linear ordering $\prec$ of $M$ and any linear ordering $\sqsubset$ of $\{1, 2, \ldots, n\}$.
  Then it is easy to see that any rigid surjection $(\{1, 2, \ldots, n\}, \Boxed\sqsubset) \to (M, \Boxed\prec)$ is also
  a non-expansive map $\calE_{n,\delta} \to (M, d)$.

  In order to apply the dual of Theorem~\ref{sbrd.thm.small2}~$(b)$ we still have to show
  that $\CC^*$ is dually directed. Take two finite linearly ordered metric spaces
  $(M_1, d_1, \Boxed{<_1})$ and $(M_2, d_2, \Boxed{<_2})$, let $n = \max\{|M_1|, |M_2|\}$ and let\break
  $\delta = \max(\Spec(M_1, d_1) \cup \Spec(M_2, d_2))$.
  Then it is easy to construct non-expansive rigid surjections $\calE^<_{n, \delta} \to (M_1, d_1, \Boxed{<_1})$
  and $\calE^<_{n, \delta} \to (M_2, d_2, \Boxed{<_2})$.
  Therefore, the dual of Theorem~\ref{sbrd.thm.small2}~$(b)$ applies and
  $$
    t^\partial_\CC(M, d) = \sum_{\substack{\Boxed< \text{ is a linear} \\ \text{ordering of } M}} t^\partial_{\CC^*}(M, d, \Boxed<).
  $$
  Note that $t^\partial_{\CC^*}(M, d, \Boxed<) = 1$ because of Lemma~\ref{dthms.lem.met}, so
  $t^\partial_\CC(M, d) = |M|!$, the number of all linear orderings of~$M$.
\end{proof}

Now that we have settled the issue of small Ramsey degrees, let us move on to big  Ramsey degrees.
We shall compute dual big Ramsey degrees of finite metric spaces in the metric space
$\Omega = (\omega, d_\omega)$, which is the metric space on $\omega$ where $d_\omega(x, y) = \max\{x, y\}$ if $x \ne y$ and $d_\omega(x, x) = 0$.
The following lemmas justify our choice.

\begin{LEM}\label{drdrs.lem.Omega-univ}
  The metric space $\Omega$ is projectively universal for all finite and countably infinite metric spaces.
\end{LEM}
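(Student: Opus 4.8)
The plan is to produce, for an arbitrary finite or countably infinite metric space $\calM = (M, d)$, an explicit non-expansive surjection $\Omega \to \calM$; since the morphisms of $\MetNes$ are precisely the non-expansive surjections, this establishes $\hom(\Omega, \calM) \ne \0$, which is exactly what projective universality demands. The whole argument hinges on one structural feature of $\Omega$: for $x \ne y$ the distance $d_\omega(x, y) = \max\{x, y\}$ depends only on the \emph{larger} of the two indices. Concretely, a map $f : \omega \to M$ is non-expansive if and only if $d(f(x), f(y)) \le y$ whenever $x < y$, i.e.\ the single value $f(y)$ must lie within distance $y$ of each of the previously used points $f(0), \dots, f(y-1)$. (In fact $d_\omega$ is an ultrametric, but only this ``$\max$'' bound is used below.)

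Using this, I would build $f$ by choosing, for each point of $M$, the first index at which it is safe to introduce it and then padding all remaining indices with a fixed value. Fix an injective enumeration $M = \{m_0, m_1, m_2, \dots\}$ (finite or infinite). Put $n_0 = 0$ and, recursively, $n_k = \max\{n_{k-1} + 1, R_k\}$, where $R_k = \max_{i < k} d(m_k, m_i)$; each $R_k$ is a maximum of finitely many finite distances, hence finite, so the strictly increasing sequence $(n_k)$ is well defined. Now set $f(n_k) = m_k$ for every $k$ and $f(y) = m_0$ for every index $y$ not of the form $n_k$.

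Verification then splits into two routine parts. For non-expansiveness I would check $d(f(x), f(y)) \le \max\{x, y\}$ by cases according to whether $x$ and $y$ are insertion indices $n_i, n_j$ or padding indices: when $y = n_j$ carries the point $m_j$ and $x$ carries an earlier point $m_i$ (including the case where $f(x) = m_0$ is padding), the inequality $R_j \le n_j = \max\{x, y\}$ yields $d(m_i, m_j) \le R_j \le \max\{x, y\}$. For surjectivity one simply notes that $m_k = f(n_k)$ lies in the image for every $k$. The only point that requires thought---and what I expect to be the crux---is the tension between surjectivity and non-expansiveness: one cannot place a far-away point at a small index without breaking the metric bound, so the construction must \emph{defer} the insertion of $m_k$ until the index is large enough ($n_k \ge R_k$) to absorb its distances to all earlier points, and the observation that a pair's constraint is governed solely by the larger index is exactly what makes such deferral always possible. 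I would remark at the outset that $d_\omega$ is a genuine metric (indeed an ultrametric), and note that the same construction covers the finite case, where the enumeration terminates and all but finitely many indices receive the padding value $m_0$.
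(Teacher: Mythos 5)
Your proof is correct and takes essentially the same approach as the paper: enumerate $M$ and defer placing each point $m_k$ until an index $n_k$ large enough to dominate its distances to all previously placed points, padding the intervening indices with an already-used value, which works precisely because $d_\omega$ charges a pair only by its larger index. The only cosmetic difference is that the paper pads with the most recently placed point rather than with $m_0$; both choices satisfy the same inequality.
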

\begin{proof}
  Let $(M, d)$ be a finite or countably infinite metric space. Let us enumerate $M$ as $M = \{x_0, x_1, x_2, \ldots\}$ and let us construct
  $q : \omega \to M$ as follows. Put $n_0 = 0$ and $q(0) = x_0$. Let $\ell_1 = d(x_0, x_1)$ and choose $n_1 \in \omega$ so that $n_1 > \ell_1$.
  Then put $q(n_1) = x_1$ and $q(n_1 - 1) = \ldots = q(1) = 0$. Assume that $n_0$, $n_1$, \ldots, $n_k$ have been chosen so that
  $q(n_j) = x_j$, $0 \le j \le k$, and $q(i) = x_j$ whenever $n_j < i < n_{j+1}$, $0 \le j < k$.
  Let $\ell_{k+1} = \max(\{n_k\} \cup \{d(x_i, x_j) : 0 \le i < j \le k\})$ and choose $n_{k+1} \in \omega$ so that $n_{k+1} > \ell_{k+1}$.
  Then put $q(n_{k+1}) = x_{k+1}$ and $q(i) = k_k$ for $n_k < i < n_{k+1}$.
  If $M$ is finite this construction stops at some point and then we map all the remaining elements of $\omega$ onto the last $x_s$ in the enumeration
  of~$M$. Anyhow, it is easy to see that $q$ is a non-expansive surjection.
\end{proof}

\begin{LEM}\label{drdrs.lem.OmegaU}
  A countable metric space $\calU$ is projectively universal for all finite metric spaces if and only if
  it is bi-projectible with $\Omega$.
\end{LEM}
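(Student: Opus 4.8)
\emph{Splitting the equivalence.} I would treat the two implications separately and isolate the single genuinely new ingredient. For the direction ``bi-projectible $\Rightarrow$ projectively universal'', suppose there is a non-expansive surjection $s : \calU \to \Omega$. Given any finite metric space $\calA$, Lemma~\ref{drdrs.lem.Omega-univ} supplies a non-expansive surjection $t : \Omega \to \calA$, and then $t \cdot s : \calU \to \calA$ is a non-expansive surjection; hence $\calU$ is projectively universal. (Only one of the two surjections witnessing bi-projectibility is needed here.) For the converse, assume $\calU$ is countable and projectively universal. Since $\calU$ is countable, Lemma~\ref{drdrs.lem.Omega-univ} gives at once a non-expansive surjection $\Omega \to \calU$. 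Thus the whole content of the lemma is concentrated in manufacturing a non-expansive surjection $\calU \to \Omega$ out of projective universality, and this is the step I expect to be the main obstacle.

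\emph{Reformulating the target map.} Because $d_\omega(i,j) = \max\{i,j\}$ for $i \ne j$, a non-expansive surjection $g : \calU \to \Omega$ is exactly a surjective map $g : U \to \omega$ for which $g(x) \ne g(y)$ implies $\max\{g(x),g(y)\} \le d(x,y)$. Writing $H_n = \{x \in U : g(x) \ge n\}$, this data is equivalent to a chain $U = H_0 \supseteq H_1 \supseteq H_2 \supseteq \cdots$ satisfying (i) $d(H_n, U \setminus H_n) \ge n$ for all $n \ge 1$, (ii) $\bigcap_n H_n = \0$ (so $g$ is finite-valued), and (iii) $H_n \supsetneq H_{n+1}$ for all $n$ (so $g$ is onto); one checks non-expansiveness by testing the pair $x,y$ with $g(x)<g(y)$ against (i) at level $g(y)$. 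I would then realize such a chain through a single basepoint: fix $p \in U$ and, for a scale $r>0$, let $C_r(p)$ be the \emph{$r$-component} of $p$, i.e. the set of points joined to $p$ by a finite chain whose consecutive distances are all $<r$. Distinct $r$-components lie at distance $\ge r$ from one another, so setting $H_n = U \setminus C_{s_n}(p)$ for a nondecreasing sequence of scales $s_n \ge n$ secures (i) automatically, while (ii)--(iii) become requirements on how $C_{s_n}(p)$ grows.

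\emph{Where projective universality enters, and the obstruction.} The crucial input is that for every $\delta>0$ the space $\calU$ has at least two $\delta$-components: projective universality yields a non-expansive surjection $\calU \to \calE_{2,\delta}$, and in the induced partition of $U$ into two nonempty $\delta$-separated pieces an entire $\delta$-component cannot straddle the gap, so there are at least two of them. Consequently $C_r(p) \subsetneq U$ for every $r$; since also $C_r(p) \uparrow U$ as $r \to \infty$ and $U$ is infinite, the assignment $r \mapsto C_r(p)$ must take infinitely many distinct values at scales tending to infinity. I can therefore choose scales $s_1 < s_2 < \cdots$ with $s_n \ge n$ along which $C_{s_n}(p)$ strictly increases, which delivers (ii) and (iii) and completes the surjection. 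The delicate point, and the reason I route through the component/scale picture rather than assigning heights point by point, is precisely (iii): a naive greedy assignment can be forced to give a single point two incompatible heights, so surjectivity must be engineered globally. It is worth flagging that projective universality genuinely cannot be weakened to mere unbounded spectrum here: $(\NN, |i-j|)$ has unbounded spectrum yet is $\delta$-connected for every $\delta>1$, and it admits no non-expansive surjection onto $\Omega$.
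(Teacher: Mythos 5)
Your proof is correct, and the hard direction is handled by a genuinely different mechanism than the paper's. Both proofs use projective universality through exactly the same input -- non-expansive surjections $\calU \to \calE_{2,\delta}$ for arbitrarily large $\delta$, whose kernels give $\delta$-separated two-block partitions -- and both handle the easy direction and the surjection $\Omega \to \calU$ identically via Lemma~\ref{drdrs.lem.Omega-univ}. But where you diverge is in how the fibers of the surjection $\calU \to \Omega$ are manufactured. The paper runs an inductive point-picking construction: it chooses points $x_0, x_1, \ldots$ and thresholds $\ell_n$ exceeding all pairwise distances among the points chosen so far, maintains a nested system of kernel blocks $U_i^{(n)}$, defines the fibers as infinite intersections $U_n = \bigcap_{j \ge n} U_n^{(j)}$, and then needs a somewhat delicate computation (the intersections-of-unions rearrangement) to show these fibers cover $U$. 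You instead reformulate a non-expansive surjection onto $\Omega$ as a decreasing chain of level sets $H_n$ with $d(H_n, U \setminus H_n) \ge n$, and realize it by complements of chain-connectivity components $C_{s_n}(p)$ of a fixed basepoint: separation is automatic (distinct $r$-components are $r$-separated), covering is automatic (monotone exhaustion $C_r(p) \uparrow U$), and the only thing to arrange is strict growth of the components, which follows from properness ($C_\delta(p) \subsetneq U$, by universality) plus exhaustion. This trades the paper's bookkeeping and covering argument for a short monotonicity argument, and it cleanly isolates where universality is genuinely needed -- your closing example $(\NN, |i-j|)$, which has unbounded spectrum but is $\delta$-chain-connected for $\delta > 1$ and admits no non-expansive surjection onto $\Omega$, is a nice certificate of that. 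The paper's construction, being fiber-by-fiber, is more explicit but requires verifying more along the way.
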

\begin{proof}
  $(\Leftarrow)$
  Follows directly from Lemma~\ref{drdrs.lem.Omega-univ}.

  $(\Rightarrow)$
  Let $\calU = (U, d_U)$ be a countable projectively universal metric space.
  Put $U_0^{(0)} = U$ and take any $x_0 \in U_0^{(0)}$.
  Take any $\ell_1 \in \RR$ with $\ell_1 > 1$. Then there is a non-expansive map $q_1 : \calU \to \calE_{2, \ell_1}$,
  where $\calE_{2, \ell_1}$ is the metric space with two points at the distance~$\ell_1$. Then $\ker q_1$ partitions $U$
  into two blocks, so let $U_0^{(1)}$ be the block which contains $x_0$ and let $U_1^{(1)}$ be the other block.
  Take any $x_1 \in U_1^{(1)}$ and note that
  $$
    (\forall a \in U_0^{(1)})(\forall b \in U_1^{(1)})\; d_U(a, b) \ge \ell_1.
  $$
  Let us inductively choose $x_2, x_3, \ldots \in U$ as follows. Assume that $x_0$, $x_1$, \ldots, $x_{n-1}$ have been chosen
  together with distances $\ell_0 = 0$, $\ell_1 > 1$, \ldots, $\ell_{n-1} > n-1$ and
  a partition $\{U_0^{(n-1)}, U_1^{(n-1)}, \ldots, U_{n-1}^{(n-1)}\}$ of $U$ so that:
  \begin{itemize}
    \item $x_i \in U_i^{(n-1)}$, $0 \le i < n$, and
    \item whenever $0 \le i < j < n$ we have that
          $$
            (\forall a \in U_i^{(n-1)})(\forall b \in U_j^{(n-1)})\; d_U(a, b) \ge \ell_j.
          $$
  \end{itemize}
  Take any $\ell_n \in \RR$ with
  $$
    \ell_n > \max(\{n\} \cup \{d_U(x_i, x_j) : 0 \le i < j < n\}),
  $$
  and let $q_n : \calU \to \calE_{2, \ell_n}$ be a non-expansive map. Then $\ker q_n$ partitions $U$
  into two blocks, and the choice of $\ell_n$ ensures that the points $x_0, x_1, \ldots, x_{n-1}$ belong to the same block.
  Let $V^{(n)}$ be the block which contains $x_0, x_1, \ldots, x_{n-1}$, let $U_n^{(n)}$ be the other block, and let
  $$
    U_i^{(n)} = U_i^{(n-1)} \cap V^{(n)}, \quad 0 \le i < n.
  $$
  Let $x_n$ be an arbitrary element of $U_n^{(n)}$. It is easy to verify that:
  \begin{itemize}
    \item $x_i \in U_i^{(n)}$, $0 \le i \le n$
    \item $U_i^{(n-1)} \supseteq U_i^{(n)}$, $0 \le i < n$, and
    \item $(\forall a \in U_i^{(n)})(\forall b \in U_j^{(n)})\; d_U(a, b) \ge \ell_j$ whenever $0 \le i < j \le n$.
  \end{itemize}
  
  Finally, for $n \ge 0$ put
  $$
    U_n = \bigcap_{j = n}^\infty U_n^{(j)}.
  $$
  Note that, by construction, the following holds:
  \begin{itemize}
    \item $x_n \in U_n$ for all $n \ge 0$,
    \item for all $n \ge 0$:
    \begin{equation}\label{drdrs.eq.opadajucilanac}
      U_n^{(n)} \supseteq U_n^{(n+1)} \supseteq U_n^{(n+2)} \supseteq \ldots,
    \end{equation}
    \item $U_i \cap U_j = \0$ for $i \ne j$, and
    \item for all $0 \le i < j$ and all $a \in U_i$, $b \in U_j$:
    \begin{equation}\label{drdrs.eq.ellj}
      d_U(a, b) \ge \ell_j > j = \max\{i, j\}.
    \end{equation}
  \end{itemize}

  Let us show that
  $
    \bigcup_{n=0}^\infty U_n = U
  $.
  Take any $y \in U$.
  If there exist infinitely many $k \in \omega$ such that $y \in U_k^{(k)}$, then there are infinitely many $k \in \omega$
  such that $d_U(x_0, y) \ge \ell_k > k$, which is impossible.
  Therefore, there are only finitely many $k \in \omega$ such that $y \in U_k^{(k)}$.
  Choose an $s \in \omega$ such that $y \notin U_t^{(t)}$ for all $t \ge s$. By construction, if $y \notin U_t^{(t)}$
  then it must be the case that $y \in \bigcup_{i=0}^{t-1} U_i^{(t)}$, whence follows that
  \begin{align*}
    y &\in \bigcap_{t=s}^\infty \bigcup_{i=0}^{t-1} U_i^{(t)} =\\
    &=    (U_0^{(s)} \cup U_1^{(s)} \cup \ldots \cup U_{s-1}^{(s)})\\
    &\cap (U_0^{(s+1)} \cup U_1^{(s+1)} \cup \ldots \cup U_{s-1}^{(s+1)} \cup U_s^{(s+1)})\\
    &\cap (U_0^{(s+2)} \cup U_1^{(s+2)} \cup \ldots \cup U_{s-1}^{(s+2)} \cup U_s^{(s+2)} \cup U_{s+1}^{(s+2)})\\
    &\cap \ldots
  \end{align*}
  In each row we have a union of pairwise disjoint sets, and due to \eqref{drdrs.eq.opadajucilanac}
  in each ``column'' we have a non-increasing sequence of sets (with respect to set inclusion), so
  \begin{align*}
    y &\in \bigcap_{t=s}^\infty \bigcup_{i=0}^{t-1} U_i^{(t)} =\\
    &=    (U_0^{(s)} \cap U_0^{(s+1)} \cap U_1^{(s+2)} \cap \ldots) && [\subseteq U_0 \text{ due to \eqref{drdrs.eq.opadajucilanac}}]\\
    &\cup (U_1^{(s)} \cap U_1^{(s+1)} \cap U_1^{(s+2)} \cap \ldots) && [\subseteq U_1 \text{ due to \eqref{drdrs.eq.opadajucilanac}}]\\
    &\cup \ldots\\
    &\cup (U_{s-1}^{(s)} \cap U_{s-1}^{(s+1)} \cap U_{s-1}^{(s+2)} \cap \ldots) && [\subseteq U_{s-1} \text{ due to \eqref{drdrs.eq.opadajucilanac}}]\\
    &\cup (U_{s}^{(s+1)} \cap U_{s}^{(s+2)} \cap U_{s}^{(s+3)} \cap \ldots) && [\subseteq U_s \text{ due to \eqref{drdrs.eq.opadajucilanac}}]\\
    &\cup (U_{s+1}^{(s+2)} \cap U_{s+1}^{(s+3)} \cap U_{s+1}^{(s+4)} \cap \ldots) && [\subseteq U_{s+1} \text{ due to \eqref{drdrs.eq.opadajucilanac}}]\\
    &\cup \ldots
  \end{align*}
  Therefore, $y \in \bigcup_{n=0}^\infty U_n$.

  Consider the mapping $q : U \to \omega$ defined so that $q(x) = j$ if and only if $x \in U_j$. This is a well-defined surjection
  which is a non-expansive map $\calU \to \Omega$ because of \eqref{drdrs.eq.ellj}. By Lemma~\ref{drdrs.lem.Omega-univ} there is a
  non-expansive map $\Omega \to \calU$, so $\Omega$ and $\calU$ are bi-projectible.
\end{proof}

\begin{LEM}\label{drdrs.lem.biprj-met-spc}
  Let $\calU$ be a countable metric space projectively universal for all finite metric spaces. Then
  $T^\partial(\calM, \calU) = T^\partial(\calM, \Omega)$ for every finite metric space $\calM$.
\end{LEM}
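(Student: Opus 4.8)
The plan is to read the statement off from the two preceding lemmas. By Lemma~\ref{drdrs.lem.OmegaU}, a countable metric space is projectively universal for all finite metric spaces precisely when it is bi-projectible with $\Omega$; hence our $\calU$ is bi-projectible with $\Omega$, and we may fix non-expansive surjections $p : \Omega \to \calU$ and $q : \calU \to \Omega$. With these in hand, the equality $T^\partial(\calM, \calU) = T^\partial(\calM, \Omega)$ is exactly the conclusion of the bi-projectibility principle recorded in Lemma~\ref{drdrs.lem.bi-proj-general}.

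The one point requiring attention is that Lemma~\ref{drdrs.lem.bi-proj-general} is phrased for a category of relational structures with quotient maps, whereas the category at hand is $\MetNes$ of countable metric spaces and non-expansive surjections. A non-expansive surjection is not a quotient map of any natural relational encoding of a metric space---reflection of the relations ``$d(x,y) \le \delta$'' may fail---so the lemma cannot simply be specialised. However, its proof (cf.~\cite[Corollary 4.10]{masul-gfrd}) uses only formal features shared by $\MetNes$: the composite of two non-expansive surjections is again a non-expansive surjection, and every surjection is an epimorphism, so all morphisms of $\MetNes^\op$ are mono, as the framework for $T^\partial$ requires. I would therefore rerun that argument directly in $\MetNes$.

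Concretely, to prove $T^\partial(\calM, \calU) \le T^\partial(\calM, \Omega)$, suppose the right-hand side equals $m$ and let $\chi$ be a Borel $k$-colouring of the non-expansive surjections $\calU \to \calM$. Transport it to the colouring $h \mapsto \chi(h \cdot q)$ of the non-expansive surjections $h : \Omega \to \calM$; this is again Borel because composition is continuous. Applying the degree $m$ for $\Omega$ yields an endomorphism $w : \Omega \to \Omega$ on whose factorisations the transported colouring takes at most $m$ values, and one checks that $w' = p \cdot w \cdot q : \calU \to \calU$ then witnesses at most $m$ values of $\chi$ on its own factorisations, since $g \cdot w' = (g \cdot p \cdot w) \cdot q$ for every $g : \calU \to \calM$. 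The reverse inequality is symmetric, with the roles of $p$ and $q$ (and of $\Omega$ and $\calU$) interchanged and the witnessing endomorphism taken to be $q \cdot v \cdot p$. The two inequalities give the claimed equality.

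I expect the main obstacle to be purely bookkeeping: verifying that $w'$ and its counterpart are genuine non-expansive surjections (immediate, as composites of such) and, more importantly, that precomposition with $p$ and $q$ never enlarges the colour set of the relevant family of morphisms. Because composition is associative and the transported colourings are defined by that very composition, this check is routine; and no self-similarity hypothesis is needed---unlike in Theorem~\ref{sbrd.thm.big1-dual-borel-weaker}, here both inequalities follow directly from the mere existence of surjections in both directions.
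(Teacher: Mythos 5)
Your proposal is correct and follows the same skeleton as the paper's two-line proof: reduce to bi-projectibility of $\calU$ and $\Omega$ via Lemma~\ref{drdrs.lem.OmegaU}, then transfer dual big Ramsey degrees across the two surjections. The only real difference is that where the paper simply cites Lemma~\ref{drdrs.lem.bi-proj-general}, you object that this lemma is stated for categories of relational structures with quotient maps, and that non-expansive surjections are not quotient maps under the natural encoding $R_\delta(x,y) \Leftrightarrow d(x,y) \le \delta$ (reflection of these relations can fail); you therefore re-prove the transfer principle directly in $\MetNes^{\le\omega}$. This scruple is legitimate---the paper's ``follows directly'' does gloss over the mismatch---and your resolution is the right one: the proof behind Lemma~\ref{drdrs.lem.bi-proj-general} (Corollary 4.10 of the cited preprint) is purely formal, using only that morphisms compose, that composition is associative, and that precomposition with a fixed morphism is continuous (so transported colourings stay Borel). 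Your two-inequality argument, with witnesses $p \cdot w \cdot q$ and $q \cdot v \cdot p$ and the computation $\chi(g \cdot w') = \gamma(g \cdot p \cdot w)$, is exactly that formal proof carried out in the category at hand, and all the steps check out. What your version buys is a self-contained justification that the degree-transfer principle really applies to non-expansive surjections; what the paper's version buys is brevity, at the cost of an implicit appeal to the categorical nature of the cited lemma's proof.
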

\begin{proof}
  Follows directly from Lemmas~\ref{drdrs.lem.bi-proj-general} and~\ref{drdrs.lem.OmegaU}.
\end{proof}

Therefore, computing dual big Ramsey degrees of finite metric spaces in $\Omega$ leads to no loss of generality
since the results will hold for all projectively universal countable metric spaces.

Let $\LMetNers^{\le\omega}$ denote the category whose objects are finite or countably infinite linearly ordered metric spaces
where the ordering of the infinite metric spaces is of order type $\omega$, and whose morphisms are non-expansive rigid surjections.
Let $\Omega^< = (\omega, d_\omega, \Boxed{<})$ be the linearly ordered metric space based on $(\omega, d_\omega)$ where $<$ is the usual ordering of $\omega$.

\begin{LEM}\label{drdrs.lem.BigT-Mord=1}
  In the category $\LMetNers^{\le\omega}$ we have that $T^\partial(\calM^<, \Omega^<) = 1$ for every finite linearly ordered metric
  space~$\calM^< = (M, d_M, \Boxed{<})$.
\end{LEM}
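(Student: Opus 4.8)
The plan is to piggyback on the Infinite Dual Ramsey Theorem (Theorem~\ref{rpemklei.thm.IDRT}) for $\WchRs$ by viewing a non-expansive rigid surjection $\Omega^< \to \calM^<$ as, first and foremost, a rigid surjection $\omega \to M$ of the underlying chains. Unwinding the definitions, the claim $T^\partial(\calM^<, \Omega^<) = 1$ asserts that for every Borel $k$-coloring $\chi$ of the homset $\hom(\Omega^<, \calM^<)$ there is a self-map $w \in \hom(\Omega^<, \Omega^<)$ such that $\chi$ is constant on $\{ f \cdot w : f \in \hom(\Omega^<, \calM^<)\}$. Since this homset sits inside $\RSurj(\omega, M)$ while the self-maps sit inside $\RSurj(\omega, \omega)$, the idea is to lift a given coloring to all of $\RSurj(\omega, M)$, apply the known statement $T^\partial(M, \omega) = 1$ in $\WchRs$, and then transport the resulting monochromatic orbit back.

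Two observations make this work, and I would establish them first. (i) Every rigid surjection $w : \omega \to \omega$ is regressive, i.e.\ $w(x) \le x$ for all $x$: because $w$ is rigid, the sequence $m_k = \min w^{-1}(k)$ is strictly increasing with $m_0 = 0$, so $m_k \ge k$, and hence $w(x) = k$ forces $x \ge m_k \ge k$. Since $d_\omega(a,b) = \max\{a,b\}$, regressivity is exactly non-expansiveness, so
\[
  \hom_{\LMetNers^{\le\omega}}(\Omega^<, \Omega^<) = \RSurj(\omega, \omega) = \hom_{\WchRs}(\omega, \omega);
\]
in particular, any $w$ produced by the Infinite Dual Ramsey Theorem is automatically a legitimate morphism of our category. (ii) The set $\hom(\Omega^<, \calM^<)$ is \emph{clopen} in $\RSurj(\omega, M)$: the constraint $d_M(f(x), f(y)) \le d_\omega(x, y) = \max\{x, y\}$ holds automatically whenever $\max\{x, y\} \ge \diam(\calM)$, so non-expansiveness reduces to the finitely many conditions with $\max\{x, y\} < \diam(\calM)$, each of which depends on only two coordinates of $f$ and is therefore clopen in the Tychonoff topology.

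With these in hand the argument is short. Given a Borel $k$-coloring $\chi$ of $\hom(\Omega^<, \calM^<)$, extend it to a Borel $k$-coloring $\widetilde\chi$ of $\RSurj(\omega, M)$ by assigning a fixed constant value on the complement, which is Borel (indeed clopen) by (ii). By the Infinite Dual Ramsey Theorem there is a $w \in \RSurj(\omega, \omega)$ with $\widetilde\chi$ constant on $\{ f \cdot w : f \in \RSurj(\omega, M)\}$. By (i), $w$ is a morphism $\Omega^< \to \Omega^<$; and since a composite of non-expansive rigid surjections is again a non-expansive rigid surjection, the set $\{ f \cdot w : f \in \hom(\Omega^<, \calM^<)\}$ is contained both in $\hom(\Omega^<, \calM^<)$ and in the $\widetilde\chi$-monochromatic orbit, where $\widetilde\chi$ agrees with $\chi$. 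Hence $\chi$ is constant there, giving $T^\partial(\calM^<, \Omega^<) \le 1$, and therefore $T^\partial(\calM^<, \Omega^<) = 1$.

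The only genuinely delicate points are the two observations above, and I expect the clopen-ness in (ii) to be the crux: it is exactly here that the specific choice $d_\omega(x,y) = \max\{x, y\}$ — whose values grow without bound — is used, ensuring that non-expansiveness is a \emph{finite} condition and hence that extending $\chi$ to a Borel coloring of the full space of rigid surjections is harmless. Everything else (composition of rigid surjections being rigid, composition of non-expansive maps being non-expansive, and $T^\partial \ge 1$ by definition) is routine.
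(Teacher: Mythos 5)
Your proof is correct and takes essentially the same route as the paper's: extend the given Borel coloring from $\hom(\Omega^<, \calM^<)$ to all of $\RSurj(\omega, (M, \Boxed<))$, invoke the Carlson--Simpson theorem there, and note that the resulting $w \in \RSurj(\omega, \omega)$ is automatically a morphism $\Omega^< \to \Omega^<$, so that composing with $w$ stays inside the homset where the extended coloring agrees with the original one. If anything, your two auxiliary observations sharpen the paper's details: the clopen-ness argument in (ii) justifies Borelness of the extended coloring more elementarily than the paper's appeal to injective continuous maps carrying Borel sets to Borel sets, and you actually prove the regressivity fact (i) that the paper merely asserts.
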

\begin{proof}
  Just as a notational convenience put $\CC = \LMetNers^{\le\omega}$.
  Take any Borel coloring $\chi : \hom_\CC(\Omega^<, \calM^<) \to k$ and define a coloring
  $\gamma : \RSurj(\omega, (M, \Boxed<)) \to k$ by $\gamma(f) = \chi(f)$ for $f \in \hom_\CC(\Omega^<, \calM^<)$ and
  $\gamma(f) = 0$ otherwise. Then $\gamma$ is also a Borel coloring because $\Phi : \hom_\CC(\Omega^<, \calM^<) \to \RSurj(\omega, (M, \Boxed<)) :
  f \mapsto f$ is injective and continuous and hence takes Borel sets to Borel sets.
  By the Carlson-Simpson Theorem there is a $w \in \RSurj(\omega, \omega)$ such that
  $|\gamma(\RSurj(\omega, (M, \Boxed<)) \circ w)| = 1$. But then $|\chi(\hom_\CC(\Omega^<, \calM^<) \circ w)| = 1$ because
  $\hom_\CC(\Omega^<, \calM^<) \subseteq \RSurj(\omega, (M, \Boxed<))$ and $\chi(f) = \gamma(f)$ for $f \in \hom_\CC(\Omega^<, \calM^<)$.
  To complete the argument note that every rigid surjection $\omega \to \omega$ is a non-expansive map
  $\Omega^< \to \Omega^<$ (actually, $\hom_\CC(\Omega^<, \Omega^<) = \RSurj(\omega, \omega)$).
\end{proof}

\begin{LEM}\label{drdrs.lem.Omega-self-sim}
  Let $\MetNes^{\le\omega}$ denote the category of finite or countably infinite metric spaces whose morphisms are
  non-expansive surjections, and let $U : \LMetNers^{\le\omega} \to \MetNes^{\le\omega}$ be the forgetful functor that forgets the ordering.
  Then $\Omega^<$ is dually self-similar with respect to $U$.
\end{LEM}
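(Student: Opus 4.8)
Write $\CC = \MetNes^{\le\omega}$, $\CC^* = \LMetNers^{\le\omega}$, and $\Omega = U(\Omega^<) = (\omega, d_\omega)$. My first step is to unwind the definition: dual self-similarity of $\Omega^<$ is self-similarity with respect to $U^\op : (\CC^*)^\op \to \CC^\op$, so what must be shown is that for every $w \in \hom_\CC(\Omega, \Omega)$ (that is, every non-expansive surjection $w : \Omega \to \Omega$) there is a morphism $v \in \hom_{\CC^*}(\restr{\Omega^<}{w}, \Omega^<)$, i.e.\ a non-expansive rigid surjection from the dual restriction $\restr{\Omega^<}{w}$ back to $\Omega^<$. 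By the (unique) dual restriction property of $U$, the object $\restr{\Omega^<}{w}$ is $(\omega, d_\omega, \Boxed{<'})$, where $<'$ is the unique order making $w : \Omega^< \to \restr{\Omega^<}{w}$ a rigid surjection; concretely $a <' b$ iff $\min w^{-1}(a) < \min w^{-1}(b)$, so $<'$ enumerates $\omega$ in the order in which the values of $w$ first appear along $0, 1, 2, \ldots$, and $<'$ has order type $\omega$.

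The key observation, which I would put front and centre, is that the metric $d_\omega$ makes non-expansiveness almost free. Since $d_\omega(x, y) = \max\{x, y\}$ for $x \ne y$, any surjection $v : \omega \to \omega$ satisfying $v(x) \le x$ for all $x$ is automatically non-expansive as a map $(\omega, d_\omega) \to (\omega, d_\omega)$: if $v(x) \ne v(y)$ then $d_\omega(v(x), v(y)) = \max\{v(x), v(y)\} \le \max\{x, y\} = d_\omega(x, y)$. Hence it suffices to produce a \emph{rigid} surjection $v : (\omega, \Boxed{<'}) \to (\omega, \Boxed<)$ with the pointwise bound $v(x) \le x$, and then non-expansiveness comes for free.

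To build such a $v$, enumerate $\omega$ in $<'$-increasing order as $a_0 <' a_1 <' \ldots$ (a bijective enumeration of $\omega$, since $<'$ has type $\omega$), and define $g : \omega \to \omega$ greedily by $g(0) = 0$ and $g(k) = \min\{1 + \max_{j<k} g(j),\; a_k\}$, setting $v(a_k) = g(k)$. By construction $g(k) \le a_k$, i.e.\ $v(x) \le x$ for every label $x$, which secures non-expansiveness by the previous paragraph. The constraint $g(k) \le 1 + \max_{j<k} g(j)$ together with $g(0) = 0$ is exactly the restricted-growth condition characterizing rigid surjections onto the standardly ordered $\omega$ (new target values are introduced one at a time, in increasing order of first appearance), so $v$ is rigid \emph{provided it is surjective}.

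I expect surjectivity to be the only step requiring real work; everything else is a formality. It is equivalent to showing that the running maximum $M_k = \max_{j \le k} g(j)$ tends to infinity, and I would argue by contradiction: if $M_k \le M$ for all $k$, then for all large $k$ we have $g(k) = \min\{M+1, a_k\} \le M$, forcing $a_k \le M$ for all large $k$; but $(a_k)$ enumerates $\omega$ bijectively, so only finitely many $k$ satisfy $a_k \le M$, a contradiction. Thus $M_k \to \infty$, and since $M_k$ increases by exactly one whenever it increases, $g$ attains every value in $\omega$, so $v$ is surjective. This produces the required $v \in \hom_{\CC^*}(\restr{\Omega^<}{w}, \Omega^<)$ and establishes the dual self-similarity of $\Omega^<$. (The construction uses nothing about $<'$ beyond its being of order type $\omega$, so in fact every $\omega$-ordered copy of the metric space $\Omega$ admits a non-expansive rigid surjection onto $\Omega^<$.)
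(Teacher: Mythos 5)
Your proof is correct and takes essentially the same route as the paper: both identify the dual restriction as a reordering $(\omega, d_\omega, \sqsubset)$ of $\Omega$ of order type $\omega$, and then greedily construct a rigid surjection $v$ back onto $\Omega^<$ satisfying the pointwise bound $v(x) \le x$, which is precisely what makes $v$ non-expansive for the metric $d_\omega$. The only (inessential) difference is the greedy rule --- the paper keeps repeating the current value until the least not-yet-seen element of $\omega$ appears and only then increments, whereas you increment eagerly and cap by $a_k$ --- and your write-up has the merit of making explicit both the key observation ($v(x)\le x$ implies non-expansive) and the surjectivity argument that the paper compresses into ``and so on.''
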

\begin{proof}
  Let $q : \Omega \to \Omega$ be a non-expansive surjection and let $\restr{\Omega^<}{q} = (\omega, d_\omega, \Boxed\sqsubset)$.
  Note that $\sqsubset$ is a linear ordering of $\omega$ of order type $\omega$ but possibly different from~$<$.
  We have to construct a non-expansive rigid surjection $r : (\omega, d_\omega, \Boxed\sqsubset) \to \Omega^<$.
  \begin{center}
    \begin{tikzcd}
      \Omega^< \arrow[r, leftarrow, "r"] & \restr{\Omega^<}{q} \arrow[r, leftarrow, "q"] \arrow[d, mapsto, dashed, "U"'] & \Omega^< \arrow[d, mapsto, dashed, "U"]\\
         & \Omega \arrow[r, leftarrow, "q"] & \Omega
    \end{tikzcd}
  \end{center}
  Assume that $\sqsubset$ orders $\omega$ as: $x_0, x_1, x_2, \ldots$. There is an $n_0 \ge 0$ such that $x_{n_0} = 0$. Put
  $r(x_i) = 0$ for $0 \le i \le n_0$. Then choose $n_1$ so that $x_{n_1} = \min(\omega \setminus \{x_0, \ldots, x_{n_0}\})$
  and put $r(x_i) = 1$ for $n_0 + 1 \le i \le n_1$. Note that $n_1 \ge 1$ so that the partial function we have defined up to now
  (that will become the full $r$ eventually) is a non-expansive map and a rigid surjection.
  Then choose $n_2$ so that $x_{n_2} = \min(\omega \setminus \{x_0, \ldots, x_{n_1}\})$
  and put $r(x_i) = 2$ for $n_1 + 1 \le i \le n_2$. Note that $n_2 \ge 2$ and that the partial function we have defined up to now
  is still a non-expansive map and a rigid surjection. And so on.
\end{proof}

\begin{THM}
  Consider metric spaces and non-expansive surjective maps,
  and let $\calU$ be a countable metric space projectively universal for all finite metric spaces.
  Then $T^\partial(\calM, \calU) = |M|!$ for every finite metric space $\calM = (M, d_M)$.
\end{THM}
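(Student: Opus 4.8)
The plan is to reduce the computation in the projectively universal space $\calU$ to a computation in $\Omega$, and then to transfer the dual big Ramsey degree from the ordered category $\LMetNers^{\le\omega}$ (where it equals $1$ by Lemma~\ref{drdrs.lem.BigT-Mord=1}) down to the unordered category $\MetNes^{\le\omega}$ using the additivity formula of Theorem~\ref{sbrd.thm.big1-dual-borel-weaker}. First, by Lemma~\ref{drdrs.lem.biprj-met-spc} we have $T^\partial(\calM, \calU) = T^\partial(\calM, \Omega)$ for every finite metric space $\calM$, so it suffices to prove $T^\partial(\calM, \Omega) = |M|!$.

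Set $\CC = \MetNes^{\le\omega}$ and $\CC^* = \LMetNers^{\le\omega}$, both enriched over $\Top$ by endowing each homset with the topology it inherits from the Tychonoff topology on $B^A$ (with $A$, $B$ discrete), exactly as for $\WchRs$, and let $U : \CC^* \to \CC$ be the forgetful functor that forgets the ordering. I would then verify the hypotheses needed to apply the dual form of Theorem~\ref{sbrd.thm.big1-dual-borel-weaker}, i.e.\ the theorem applied to $\CC^\op$ and $(\CC^*)^\op$. All morphisms in $\CC$ and $\CC^*$ are surjective, hence epi, so all morphisms in $\CC^\op$ and $(\CC^*)^\op$ are mono. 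Since $|U^{-1}(\calM)| = |M|!$ for every finite metric space $\calM$, the fibre $U^{-1}(\calM)$ is finite. That $U$ has unique dual restrictions is proved exactly as in the preceding theorem on dual small Ramsey degrees: given a linearly ordered source (of finite or $\omega$-order type) and a non-expansive surjection onto an unordered target, the rigidity requirement $m_1 <' m_2 \Leftrightarrow \min f^{-1}(m_1) < \min f^{-1}(m_2)$ forces a unique ordering on the target, which has order type $\omega$ when the target is countably infinite.

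The one genuinely new point to check is the Borel hypothesis: that $\hom_{\CC^*}(A^*, B^*)$ is a Borel subset of $\hom_\CC(U(A^*), U(B^*))$. This reduces to observing that, with the orderings fixed, rigidity of a surjection $f$ is a Borel condition: for each pair of target points the clause $\min f^{-1}(m_1) < \min f^{-1}(m_2)$ depends, for each prescribed pair of values of these minima, on only finitely many coordinates of $f$, so it is a countable union of clopen sets, and rigidity is a countable conjunction of such clauses. Together with the dual self-similarity of $\Omega^<$ furnished by Lemma~\ref{drdrs.lem.Omega-self-sim}, all hypotheses of the dual of Theorem~\ref{sbrd.thm.big1-dual-borel-weaker} are met, so it yields
$$
  T^\partial(\calM, \Omega) = \sum_{\calM^< \in U^{-1}(\calM)} T^\partial(\calM^<, \Omega^<).
$$
By Lemma~\ref{drdrs.lem.BigT-Mord=1} each summand equals $1$, and there are $|U^{-1}(\calM)| = |M|!$ of them, so $T^\partial(\calM, \Omega) = |M|!$; the first reduction then gives $T^\partial(\calM, \calU) = |M|!$.

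I expect the main obstacle to be the careful bookkeeping around the dualization, namely checking that \emph{unique dual restrictions}, \emph{dually self-similar}, and the mono/epi and Borel hypotheses are exactly the correct dual instances of what Theorem~\ref{sbrd.thm.big1-dual-borel-weaker} literally demands, rather than any mathematically deep step. Once the framework is lined up with the dual picture, the computation is immediate.
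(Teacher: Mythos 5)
Your proposal is correct and follows essentially the same route as the paper's own proof: reduce to $\Omega$ via Lemma~\ref{drdrs.lem.biprj-met-spc}, apply the dual of Theorem~\ref{sbrd.thm.big1-dual-borel-weaker} to the forgetful expansion $U : \LMetNers^{\le\omega} \to \MetNes^{\le\omega}$ using unique dual restrictions and the dual self-similarity of $\Omega^<$ (Lemma~\ref{drdrs.lem.Omega-self-sim}), and finish with Lemma~\ref{drdrs.lem.BigT-Mord=1}. Your explicit verification of the Borel hypothesis and the enrichment details are points the paper leaves implicit, but they do not change the argument.
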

\begin{proof}
  Just as a notational convenience let $\CC = \MetNes^{\le\omega}$.
  Let $\calM = (M, d_M)$ be a finite metric space.  With Lemma~\ref{drdrs.lem.biprj-met-spc} in mind it suffices to show that
  $T^\partial_\CC(\calM, \Omega) = |M|!$.

  Let $\CC^* = \LMetNers^{\le\omega}$ and let $U : \CC^* \to \CC$ be the forgetful functor that forgets the ordering.
  Recall that $\Omega^<$ is dually self-similar with respect to $U$ by Lemma~\ref{drdrs.lem.Omega-self-sim}. It is also easy to see that
  $U$ has unique dual restrictions. So, the dual of Theorem~\ref{sbrd.thm.big1-dual-borel-weaker} applies and we have that
  $$
    T^\partial_\CC(\calM, \Omega) = \sum_{\calM^< \in U^{-1}(\calM)} T^\partial_{\CC^*}(\calM^<, \Omega^<).
  $$
  But $T^\partial_{\CC^*}(\calM^<, \Omega^<) = 1$ by Lemma~\ref{drdrs.lem.BigT-Mord=1}, so
  $T^\partial_\CC(\calM, \Omega) = |U^{-1}(\calM)| = |M|!$.
\end{proof}

\section*{Acknowledgements}

The second author was supported by the Science Fund of the Republic of Serbia, Grant No.~7750027:
Set-theoretic, model-theoretic and Ramsey-theoretic phenomena in mathematical structures: similarity and diversity -- SMART.


\end{document}